\crefname{claim}{Claim}{Claims}
\crefname{assumption}{Assumption}{Assumptions}
\title[weighted Four-Dimensional Fano Hypersurfaces of K3 Type]{
Weighted Four-Dimensional Fano Hypersurfaces of K3 Type
}
\author{Valeria Bertini}
\address[Valeria Bertini]{Dipartimento di Matematica Federigo Enriques, Università Statale di Milano. Via Cesare Saldini 50, 20133, Milano, Italy}
\email{valeria.bertini@unimi.it}
  \author{Francesco Antonio Denisi}
  \address[F.\ A.\ Denisi]{Université Paris Cité and Sorbonne Université, CNRS, IMJ-PRG, F-75013 Paris, France}
\email{denisi@imj-prg.fr}
\author{Enrico Fatighenti}
\address[Enrico Fatighenti]{Dipartimento di Matematica, Università di Bologna. Piazza di Porta San Donato 5, 40126, Bologna, Italy}
\email{enrico.fatighenti@unibo.it}
\author{Annalisa Grossi}
\address[Annalisa Grossi]{Dipartimento di Matematica, Università di Bologna. Piazza di Porta San Donato 5, 40126, Bologna, Italy}
\email{annalisa.grossi3@unibo.it}
\date{\today}
\keywords{}
\theoremstyle{plain}
\theoremstyle{definition}
\newcommand{\N}{\mathbf{N}}
\newcommand{\Z}{\mathbf{Z}}
\newcommand{\Q}{\mathbf{Q}}
\newcommand{\C}{\mathbf{C}}
\newcommand{\A}{\mathbf{A}}
\renewcommand{\P}{\mathbf{P}}
\renewcommand{\OO}{\mathcal{O}}
\begin{document}

\begin{abstract}
We study weighted Fano fourfolds of K3 type realized as hypersurfaces in weighted projective spaces. Under the additional assumption that the singular locus has dimension at most one, we prove that only finitely many such families exist. We provide the complete list and analyze their rationality properties, as well as their singularities.
\end{abstract}

\maketitle

\tableofcontents

\section{Introduction}

Fano varieties of K3 type (abbreviated as FK3) form a remarkable class of smooth projective varieties that lie at the intersection of several major areas of algebraic geometry. Their name reflects the fact that, at their core, they exhibit a Hodge-theoretic structure closely resembling that of a K3 surface.

The prototypical example of a FK3 is the classical cubic fourfold, defined as the zero locus of a cubic polynomial in $\P^5$. This variety has been the subject of intense investigation, giving rise to a wide range of deep questions—most notably concerning its rationality and the explicit construction of associated hyperk\"ahler manifolds. In layman's terms, a smooth cubic fourfold is conjectured to be rational if and only if it has a K3 associated to it in the Hodge theoretical or derived categorical sense - see \cite{AT14} for the connection between the two worlds. In the derived category setting, this conjecture is widely known as \emph{Kuznetsov's conjecture}. There are many excellent surveys addressing these questions: as an example, we mention \cite{Deb22,Has16,Kuz16}.

Beyond the case of the cubic fourfold, one can pose analogous questions for other FK3 varieties, such as Gushel--Mukai fourfolds and many others; see, for instance, \cite{Fat22} for a collection of relevant results. In recent years, the third-named author and several collaborators have worked to produce new examples of FK3 varieties; see, for example, \cite{BFMT21}.

To enlarge the class of known examples, it is natural to slightly relax the definition of Fano varieties—for instance, by allowing mild singularities such as cyclic quotient singularities. A convenient and fruitful strategy is to consider hypersurfaces in five-dimensional weighted projective spaces. For this broader class of varieties, one can formulate the same guiding questions as in the smooth case, suitably adapted to the singular setting. In particular, one may investigate their rationality properties and ask whether an appropriate version of Kuznetsov’s conjecture still applies. 
Another compelling direction is to explore whether these weighted FK3 varieties might lead to the construction of singular analogues of hyperkähler manifolds, and hence of singular analogues of Enriques manifolds. These topics have attracted growing attention in recent years (see, e.g.\, \cite{BCS24, BGMM24, brakkee2024, DRTX24,FM21, Men22}). 
In this paper, we focus on the first question, leaving the second for future work.

Working with (even mildly) singular Fano fourfolds naturally presents some drawbacks, most notably in understanding their geography. Indeed, unless one imposes strong constraints on the type of singularities—such as terminality, which guarantees finiteness by Birkar’s celebrated results—one immediately obtains infinitely many families of weighted Fano hypersurfaces; see, for instance, the works of Johnson--Kollár, and Brown--Kasprzyk \cite{JK01,Brown_Kasprzyk}, and the database \cite{altinokgraded} for many lists of examples.

On the other hand, restricting solely to the terminal case risks excluding several examples of particular interest for our purposes. In this work, we adopt a hybrid approach: we consider quasi-smooth Fano fourfold hypersurfaces whose singular locus has dimension at most one.
This condition is not sufficient, in general, to ensure finiteness. However, when combined with the additional constraints required for a Fano variety to be of K3 type, it does yield the boundedness of the families of FK3 fourfold hypersurfaces satisfying our singularity assumption. Moreover, our restriction on the singular locus is optimal from the boundedness perspective: as observed in \cref{rmk:unbounded}, it is not difficult to construct infinitely many FK3 fourfold hypersurfaces whose singular locus has dimension two.

The purpose of this paper is twofold. On the one hand, we prove—by purely combinatorial means—the boundedness of FK3 hypersurfaces under our singularity assumption, and we provide the complete list of such families. On the other hand, we investigate the birational geometry of the resulting examples, with particular emphasis on their rationality and the structure of their singular loci.

\medskip

We now describe in detail the content of our paper. In \cref{section FK3 weighted fourfolds} we start by providing some basic facts and definitions on weighted hypersurfaces, $\mathbf{Q}$-Fano fourfolds and FK3 varieties. As a first result, we address the finiteness of FK3 fourfolds under our assumption on the singular locus. 

To prove this result, we proceed as follows. We start from a weighted Fano fourfold hypersurface $X_d \subseteq \P(a_0, \ldots, a_5)$. We observe that imposing the K3-type condition, together with the constraint on the singular locus, yields strong combinatorial restrictions on the admissible weights $a_0, \ldots, a_5$; see \cref{thm ai+a5=d}. Then, in \cref{lemma the K3 is quasi smooth}, \cref{lemma the K3 is well-formed}, and in \cref{cor the k3 is a k3}, we show how to associate to each $X_d$ a quasi-smooth, well-formed weighted K3 surface that is not a linear cone — of which there are finitely many (the \emph{famous 95} list by Reid, see \cite{reid}).

The complete list of FK3 weighted fourfold hypersurfaces satisfying our singularity assumption can thus be constructed starting from the list of K3 surfaces, and then adding weights in such a way that the dimension of the singular locus does not exceed one.

We summarize our main results in the following theorem. From now on, a weighted hypersurface is always assumed to be quasi-smooth, not a linear cone and well-formed, see \cref{subsec weighted hypersurf and FK3}.

\begin{theorem}[cf.\ \cref{thm ai+a5=d}, \cref{thm list complete}, \cref{rationality}] \label{main:thm}
\ 
\begin{itemize}
    \item There exist exactly 244 families of FK3 fourfold hypersurfaces $X_d \subseteq \P(a_0, \ldots, a_5)$ whose singular locus has dimension at most 1. These are listed in \cref{table fourfolds}.
    \item Every FK3 weighted fourfold hypersurface $X_d \subseteq \P(a_0, \ldots, a_5)$ with singular locus of dimension at most 1 that is not a cubic fourfold is rational. 
\end{itemize}
\end{theorem}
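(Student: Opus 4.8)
The plan is to prove the two assertions of the theorem separately, since they are of genuinely different nature.

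For the finiteness/classification statement, the strategy is dictated by the proof sketch already outlined in the excerpt. First I would invoke \cref{thm ai+a5=d}, which records the combinatorial constraints forced on the weights $a_0,\dots,a_5$ by the FK3 condition together with the hypothesis that $\dim\operatorname{Sing}(X_d)\le 1$; the key relation there (a coincidence of the form $a_i+a_5=d$ for suitable indices) is what makes the reduction to surfaces possible. Next, using \cref{lemma the K3 is quasi smooth}, \cref{lemma the K3 is well-formed} and \cref{cor the k3 is a k3}, I would associate to each admissible $X_d\subseteq\P(a_0,\dots,a_5)$ a quasi-smooth, well-formed, non-linear-cone weighted K3 surface $S_{d'}\subseteq\P(a_{i_0},a_{i_1},a_{i_2},a_{i_3})$ obtained by discarding the distinguished weight(s). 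Since such K3 surfaces are exactly the famous $95$ families of Reid \cite{reid}, the set of possible underlying weight data is finite. The remaining work is the reverse enumeration: for each of the $95$ K3 weight systems, I would reinsert the extra weights compatibly with the relation of \cref{thm ai+a5=d} and retain precisely those completions for which the resulting fourfold is quasi-smooth, well-formed, not a linear cone, of K3 type, and has $\dim\operatorname{Sing}\le 1$. This is a finite, purely combinatorial check; carrying it out produces the $244$ families tabulated in \cref{table fourfolds}, which establishes the first bullet (and is the content of \cref{thm list complete}).

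For the rationality statement, the plan is to go through the finite list case by case and exhibit, for every family other than the cubic fourfold, an explicit birational map to $\P^4$ (or to a variety already known to be rational). The uniform tool I would rely on is the classical observation that a relation $a_i + a_5 = d$ among the weights means the defining polynomial $F$ can be written, after using quasi-smoothness to clear lower-order terms, so that one variable appears linearly: $F = x_j\, g + h$ with $g,h$ not involving $x_j$. Solving $x_j = -h/g$ then realizes $X_d$ as birational to a weighted projective space of one lower dimension, and iterating or composing with standard toric/Cremona transformations reduces to $\P^4$. Concretely I would split the list according to which structural feature is present—linear occurrence of a variable, a monomial of the form $x_j^k x_\ell$ forcing a projection, or fibration structures (e.g.\ conic or quadric bundles over a rational base)—and treat each bucket with the corresponding standard rationality argument.

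I expect the main obstacle to be neither of the two halves in isolation but the sheer bookkeeping of the second: the $244$ families do not all yield to a single projection, and a handful will likely require ad hoc arguments—identifying a suitable rational fibration, or analyzing the singular locus closely enough to ensure the birational map is defined in codimension one despite the quotient singularities. The genuine exception, the cubic fourfold, must be isolated precisely because no weight relation $a_i+a_5=d$ with a linearly-appearing variable is available there (all weights equal $1$, $d=3$), so the projection technique fails and its rationality remains open; the theorem therefore only claims rationality for the complement of that single family. Keeping the weighted-projective birational maps well-defined across the mild cyclic quotient singularities, and verifying in each case that the exceptional behaviour along $\operatorname{Sing}(X_d)$ does not obstruct rationality, is where the care will be concentrated.
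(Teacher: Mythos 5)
Your plan for the finiteness statement coincides with the paper's proof: invoke \cref{thm ai+a5=d}, use \cref{lemma the K3 is quasi smooth}, \cref{lemma the K3 is well-formed} and \cref{cor the k3 is a k3} to attach a quasi-smooth, well-formed weighted K3 surface to each fourfold, reduce to Reid's 95 families, and then re-enumerate by partitions $s+t=d$ to obtain \cref{table fourfolds}; nothing to add there. For rationality, your primary tool---the linear occurrence of a variable forced by $a_i+a_5=d$---is exactly the paper's argument, but your expectation that ``the 244 families do not all yield to a single projection'' and that a handful will need ad hoc fibration arguments is unfounded, and this is precisely what \cref{thm ai+a5=d} buys: every family other than the cubic has $a_5>1$ (if $a_5=1$ then all weights equal $1$ and $d=3$), hence admits an index $i$ with $a_i+a_5=d$. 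Even in the boundary case $a_4=a_5=d/2$, where the equation a priori contains $x_5^2$, quasi-smoothness forces the quadratic form in $(x_4,x_5)$ to be nondegenerate, and hyperbolizing it and absorbing the terms linear in $x_4,x_5$ puts the equation in the form $x_ix_5+g_d(x_0,\dots,x_4)$. So the single projection from the coordinate point $p$ dual to $x_5$ is birational onto $\P(a_0,\dots,a_4)$ in all 243 non-cubic cases, and weighted projective spaces are rational (being toric), so no iteration, Cremona transformations, or case-by-case bucketing is needed. The paper packages this uniformly as \cref{prop blow-up}, which moreover identifies the weighted blow-up of $X_d$ at $p$ with $\mathrm{Bl}_{S_d}\P(a_0,\dots,a_4)$; that stronger statement is what also yields the Hodge-theoretic comparison $H^4(X_d,\mathbf{Q})\cong H^2(S_d,\mathbf{Q})$, beyond mere rationality. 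Finally, your concern about the cyclic quotient singularities obstructing the birational maps is immaterial for this purpose: rationality only requires a map that is an isomorphism over a dense open subset, which the projection is, irrespective of what happens along $X_d^{\mathrm{sing}}$.
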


In \cref{section: the examples}, we provide the complete list and give a complete geometric description of all the examples. In \cref{prop blow-up}, we explicitly describe a birational map from each $X_d$ to a weighted projective space $\P^4$. In fact, we identify the (weighted) blow up of $X_d$ at the point dual to the coordinate $x_5$ with the blow up of $\P(a_0, \ldots, a_4)$ along a naturally associated complete intersection K3 surface. This proves the rationality of $X_d$, and also shows that the K3-type Hodge structure and the Calabi--Yau 2 (CY2) subcategory in the derived category of perfect complexes are commutative, as discussed in \cref{subse:geometricdescription}, consistently with what is expected or happens in other known examples of fourfolds of K3 type.

In \cref{section:sing}, we provide a detailed analysis of the singularity types of the families we have described. As a corollary, in \cref{cor:terminality} we list 197 families of terminal weighted Fano (FK3) fourfolds. Finally, in \cref{sec:others}, we examine in detail two special families of examples, which are included in \cref{table fourfolds from del Pezzo}. These do not belong to the sub-case we have focused on, as their singular loci have dimension 2. These examples were first described by Laza, Pearlstein, and Zhang in \cite{LPZ18}, as part of a broader classification. The same tools for studying rationality and the commutativity of the Hodge structure that we develop in this paper (including the \emph{double suspension trick}, see \cref{remark double suspension}) apply to all the elements in their list except these two. In particular, we can neither conclude that the general elements of these two families are rational, nor explain their K3 structure via an actual K3 surface. Therefore, we conclude the paper by discussing a geometric construction that applies to certain special elements of these families, and we end with a brief discussion on how these cases might be relevant to a possible extension of Kuznetsov’s rationality conjecture for cubic fourfolds.

\subsection*{Acknowledgments}
We are grateful to Alessandro Frassineti, Lucas Li Bassi, and Fabio Tanturri for useful discussions and comments. This research has been partially funded by the European Union - NextGenerationEU under the
National Recovery and Resilience Plan (PNRR) - Mission 4 Education and research - Component 2
From research to business - Investment 1.1 Notice Prin 2022 - DD N. 104 del 2/2/2022, from title “Symplectic varieties: their interplay with Fano manifolds and derived categories”, proposal code 2022PEKYBJ – CUP J53D23003840006. The first-named author has been partially funded by the PRIN 2020 2020KKWT53003 - ‘Curves, Ricci flat Varieties and their Interactions’, and wishes to thank the MIUR Excellence Department of Mathematics, University of Milano. The second-named author is supported by the European Research Council (ERC) under the European Union's Horizon 2020 research and innovation programme (ERC-2020-SyG-854361-HyperK). The first, second, and fourth-named authors are grateful to the European Research Council (ERC) for supporting a visiting period during the ‘Junior Trimester Program: "Algebraic geometry: derived categories, Hodge theory, and Chow groups’, organized in collaboration with the aforementioned programme.  The first, third and fourth authors are members of the INDAM-GNSAGA group.

\section{FK3 weighted fourfolds}\label{section FK3 weighted fourfolds}

\subsection{Weighted hypersurfaces and FK3}\label{subsec weighted hypersurf and FK3}

Given $(\underline a)=(a_0,...,a_n)\in \Z_{>0}^{n+1}$, define $S(\underline a)$ to be the graded ring $\C[x_0,...,x_n]$ graded by $\deg x_i=a_i$. The weighted projective space $\P(\underline a)=\P(a_0,...,a_n)$ is defined as \[\P(\underline a):=\Proj(S(\underline a)).\] 

Observe that $\Proj(S(a_0,...,a_n))\simeq \Proj(S(ka_0,...,ka_n))$, as the corresponding graded rings are isomorphic. Furthermore, if $a_0,...,a_n$ are positive integers with no common factors and $q=\text{gcd}(a_1,...,a_n)$, then $\Proj(S(a_0,...,a_n))\simeq\Proj(a_0,a_1/q,...,a_n/q)$, see \cite[Lemma 5.7]{Fletcher00}. Because of that, we will always assume that the weighted projective space $\P(a_0,...,a_n)$ is \textit{well-formed}, i.e.\  that $\gcd(a_0,...,\hat a_i,...,a_n)=1$ for every $i=0,..,n$.

For a closed subvariety $X\subseteq \P(\underline a)$, we denote by $C^*_X$ the punctured affine cone over $X$, i.e.\ $C^*_X=p^{-1}X$ for $p:\A^{n+1}\smallsetminus \{0\}\rightarrow \P(\underline a)$ the quotient map. 
\medskip

In what follows, a \textit{weighted hypersurface} $X_d\subseteq \P(\underline a)$, i.e.\ 
\[X_d=V(f_d)=\Proj (S(\underline a)/(f_d))\] 
with $\deg(f_d)=d$ in $S(\underline a)$, is assumed to be \textit{quasi-smooth}, i.e.\ $C^*_X$ is smooth, \textit{not a linear cone}, i.e.\ $d\neq a_i$ for all $i$, and \textit{well-formed}, i.e.\ $\P(\underline a)$ is well-formed and $X_d\cap \P(\underline a)^{\mathrm{sing}}$ has codimension at least 2 in $X_d$. Being quasi-smooth implies that the singularities of $X$ can only arise from those of the ambient space, i.e.\ the ones coming from the $\C^*$-action on $C_X$. Not being a linear cone limits redundancy, as it excludes hypersurfaces isomorphic to a weighted projective space or to a cone in a weighted projective space.

\medskip
These hypotheses on the hypersurface translate into numerical conditions on the weights $a_i$ and the degree $d$. Indeed, for $X_d=V(f_d)\subseteq \P(\underline a)$ being well-formed is equivalent to the condition:
\[\gcd(a_0,...,\hat a_i, ..., a_n)=1 \ \ \forall i \ \ \ \text{and} \ \ \ \gcd(a_0,...,\hat a_i,..., \hat a_j,...,a_n)|d \ \ \forall i,j.\]

Furthermore, being quasi-smooth and not a linear cone for a weighted hypersurface translates into the following criterion.

\begin{proposition}{\cite[Theorem 8.1]{Fletcher00}}\label{prop quasismooth}
    Let $X_d=V(f_d)\subseteq \P(\underline a)$ be a general weighted hypersurface of degree $d$. Then $X_d$ is quasi-smooth, not a linear cone, if and only if the following numerical conditions are satisfied: $a_i\neq d$ for all $i$, and each nonempty subset $I\subseteq \{0,...,n\}$, either 
    \begin{enumerate}
        \item $d=\sum_{i\in I}n_i a_i$, with coefficients $n_i\in \N$, or
        \item there are at least $|I|$ indices $j\notin I$ such that $d-a_j=\sum_{i\in I}n_i a_i$, with coefficients $n_i\in \N$.
    \end{enumerate}
    In particular, $d>a_i$ for all $i$.
\end{proposition}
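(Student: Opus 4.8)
The statement is the classical quasi-smoothness criterion of Iano--Fletcher, and I would prove it by reading the Jacobian criterion off a stratification of the punctured affine cone $C^*_X\subseteq\A^{n+1}\smallsetminus\{0\}$. By definition $X_d$ is quasi-smooth exactly when $C^*_X=\{f_d=0\}\smallsetminus\{0\}$ is smooth, which by the Jacobian criterion means $\nabla f_d(p)\neq 0$ for every $p\in\{f_d=0\}\smallsetminus\{0\}$. First I would stratify $\A^{n+1}\smallsetminus\{0\}$ by the coordinate tori $T_I=\{x : x_i\neq 0\iff i\in I\}$, for $\emptyset\neq I\subseteq\{0,\dots,n\}$, and classify the potential singular points of $C^*_X$ by their support $I$. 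The basic dictionary is that a degree-$d$ monomial is nonvanishing at a point of $T_I$ iff its support lies in $I$; hence condition (1) for $I$ says precisely that the linear system $|\OO(d)|$ of degree-$d$ forms has a member not vanishing on $T_I$, i.e.\ that $T_I$ is disjoint from its base locus $B$.

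When condition (1) holds for $I$ there is nothing to check: $T_I\cap B=\emptyset$ and $\A^{n+1}\smallsetminus\{0\}$ is smooth, so by Bertini's theorem the general $\{f_d=0\}$ is smooth along $T_I$. The whole content therefore sits on the tori where (1) fails. There, no degree-$d$ monomial is supported on $I$, so $f_d$ vanishes identically on the coordinate subspace $\Pi_I=\overline{T_I}$; consequently every tangential partial $\partial f_d/\partial x_i$ ($i\in I$) vanishes on all of $\Pi_I$, and a direct monomial computation shows that for $p\in\Pi_I$ one has $\nabla f_d(p)=(h_j(p))_{j\notin I}$, where $h_j:=(\partial f_d/\partial x_j)|_{\Pi_I}$ is a weighted form of degree $d-a_j$ supported on $I$. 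The form $h_j$ is generically nonzero exactly when $d-a_j=\sum_{i\in I}n_i a_i$ is solvable, i.e.\ for $j$ in the index set $J$ counted by condition (2). The key point is that this description of the gradient is valid on all of $\Pi_I$, not merely on the open torus $T_I$; so the singular locus of $C_X$ inside $\Pi_I$ is exactly the cone $Z_I:=\bigcap_{j\in J}\{h_j=0\}\subseteq\Pi_I\cong\A^{|I|}$.

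This makes the equivalence fall out. For necessity, suppose (1) and (2) both fail for some $I$, so $|J|\leq|I|-1$; then $Z_I$ is cut out by at most $|I|-1$ equations in $\A^{|I|}$, whence $\dim Z_I\geq|I|-|J|\geq 1$ for every $f_d$, so $Z_I$ has a nonzero point and $C_X$ is singular there --- not quasi-smooth. For sufficiency, assuming (1) or (2) for every $I$, I would check smoothness stratum by stratum: on (1)-strata use Bertini as above, while on a (2)-stratum I would pass to the quotient torus $\overline T=T_I/\C^*\cong(\C^*)^{|I|-1}$ and note that each system containing $h_j$ is base-point-free there, as it contains a monomial, which is nowhere zero on $T_I$; since $|J|\geq|I|$, cutting the $(|I|-1)$-dimensional $\overline T$ by $|I|$ general base-point-free divisors leaves the empty set, so $Z_I\cap T_I=\emptyset$ and $C_X$ has no singular point of support exactly $I$. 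Ranging over all $I$ gives quasi-smoothness of the general $f_d$.

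Finally, matching ``not a linear cone'' with $a_i\neq d$ is definitional ($a_i=d$ makes $x_i$ a linear monomial, allowing elimination of $x_i$), and $d>a_i$ follows by applying the criterion to the singletons $I=\{i\}$: if $d<a_i$ then (1) fails and (2) also fails (a relation $d-a_j=n\,a_i$ forces either $d=a_j$, excluded, or $d-a_j\geq a_i>d$, impossible), so quasi-smoothness would be violated. The step I expect to be most delicate is the converse: the naive hope that failure of (2) forces a singular point on the \emph{open} torus $T_I$ is false --- when some $h_j$ is a single monomial it is nowhere zero on $T_I$, so $Z_I\cap T_I$ can be empty even with few $h_j$. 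The clean way around this is precisely the uniform gradient computation above, which locates the singular point anywhere on $\Pi_I$ (typically on a smaller stratum) and reduces necessity to a crude affine dimension count; pinning down the exact threshold $|I|$ on the sufficiency side likewise rests on the base-point-freeness that feeds the Bertini argument.
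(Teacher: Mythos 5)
Your proof is correct, but note that the paper itself contains no proof of this proposition: it is imported verbatim from \cite[Theorem 8.1]{Fletcher00}, so the only meaningful comparison is with that cited source. Your argument—stratifying the punctured affine cone by coordinate tori $T_I$, applying Bertini away from the base locus on strata admitting a degree-$d$ monomial, and on the remaining strata identifying the gradient along $\Pi_I$ with $(h_j(p))_{j\notin I}$ so that necessity follows from a crude dimension count on $Z_I$ and sufficiency from base-point-freeness plus a general-position count (using $\C^*$-invariance or the quotient torus)—is essentially Iano--Fletcher's original proof, i.e.\ the same approach as the reference the paper relies on.
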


We are interested in weighted hypersurfaces that are Fano varieties.

\begin{definition}\label{def:Fano}
A Fano variety is a normal, $\mathbf{Q}$-factorial projective variety $X$ with at worst klt singularities, such that $-K_X$ is ample.
\end{definition}

In literature, such singular analogues of Fano varieties are often referred as $\mathbf{Q}$-Fano.

By \cite[Theorem 3.1.6]{Dolgachev82} being quasi-smooth for $X_d=V(f_d)\subseteq\P(\underline a)$ ensures having quotient singularities, which are klt, for example, by \cite[Corollary 3.12]{Prokhorov01}. Moreover, algebraic varieties with only quotient singularities are $\mathbf{Q}$-factorial, for example, by \cite[Proposition 5.15]{KM98}.
Then a weighted hypersurface is Fano as soon as the anti-canonical divisor is ample.  Note that for a weighted hypersurface $X_d\subseteq\P(\underline a)$ the adjunction formula holds \cite[Theorem 3.3.4]{Dolgachev82}: 
\begin{equation}\label{eq adjuction}
    \OO(K_{X_d})\simeq \mathcal O_{X_d}\Bigl(d-\sum_{i=0}^n a_i\Bigl),
\end{equation}
hence $X_d\subseteq\P(\underline a)$ is Fano as soon as $d<\sum_{i=0}^n a_i$. The well-formed hypothesis is necessary for the adjunction formula to hold (see \cite[Examples 6.15 (i),(ii)]{Fletcher00} for some counterexamples).

On a quasi-smooth variety $X$ is it possible to define the notion of a pure $\Q$-Hodge structure,  see Steenbrink \cite[Theorem~1.12]{Ste76}. Consider in fact the smooth locus $j:X_0\hookrightarrow X$ and $\Omega^{[p]}_X:=j_* \Omega^p_{X_0}$. Then we can define $H^{p,q}(X)$ as in the smooth case and moreover $H^{p,q}(X) \cong H^q\left(X, \Omega^{[p]}_X\right)$. The Hodge decomposition takes then the form $$H^k(X,\C) = \bigoplus_{p+q=k}   H^q\left(X, \Omega^{[p]}_X\right).$$

It therefore makes sense to formulate the following definition.

\begin{definition}[Cf. Definition 1.1 in \cite{FatMon21}]
    A weighted hypersurface $X_d\subseteq \P(\underline a)$ of dimension $2t$ is a \textit{Fano of K3 type} (in short, \textit{FK3}) if it is a Fano variety and $H^{2t}(X,\C)$ is a Hodge structure of K3 type, i.e.\
    \begin{itemize}
        \item $\max\{q-p \mid H^{p,q}(X_d) \neq 0\}=2$,
        \item $h^{t-1,t+1}=1$.
\end{itemize}
\end{definition}

The Hodge structure of a weighted hypersurface $X_d=V(f_d)\subseteq\P(\underline a)$ is computed by the Jacobian ring $R_{f_d}$ of $f_d$, i.e.\
\[
R_{f_d}=S(\underline a)/J_{f_d}
\]
where $J_{f_d}=(\partial f_d/\partial x_i)_{i=0,..,n}$ is the Jacobian ideal of $f_d$.

\begin{theorem}[Griffiths' Residue Theorem]{\cite[Theorem 4.3.2]{Dolgachev82}}\label{thm Hodge structure of Xd}
Let $X_d=V(f_d)\subseteq \P(\underline a)$ be a weighted hypersurface of degree $d$ and dimension $2t$. Then
\[
h^{2t-j,j}_{\mathrm{prim}}(X_d)=\dim (R_{f_d})_{(j+1)d-\sum_{i=0}^{2t+1} a_i}.
\] 
\end{theorem}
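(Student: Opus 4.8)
The plan is to follow Griffiths' residue calculus, adapted to the weighted quasi-smooth setting as in Dolgachev and Steenbrink. Write $n=2t+1$ for the dimension of the ambient $\P(\underline a)$. The first move is to trade the cohomology of $X_d$ for that of its complement $U=\P(\underline a)\smallsetminus X_d$. Because $X_d$ is quasi-smooth, the residue map fits into a Gysin-type long exact sequence relating $H^\bullet(U)$ and $H^{\bullet-1}(X_d)_{\mathrm{prim}}$; since $\P(\underline a)$ is a finite quotient of ordinary projective space, its rational cohomology is one-dimensional of Hodge--Tate type $(p,p)$ in each even degree and vanishes in odd degrees. In the middle range this forces
\[
\mathrm{res}\colon H^{2t+1}(U)\xrightarrow{\ \sim\ } H^{2t}(X_d)_{\mathrm{prim}},
\]
an isomorphism that shifts the Hodge filtration by one. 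Thus it suffices to compute $H^{2t+1}(U)$ together with its Hodge filtration.

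Next I would compute $H^{2t+1}(U)$ by algebraic de Rham cohomology, which by Grothendieck's comparison (valid here because $U$ carries only quotient singularities, cf.\ the setup of \cite{Ste76}) is represented by rational $n$-forms on $\P(\underline a)$ with poles along $X_d$, i.e.\ $\C^*$-invariant forms on the affine cone. The relevant classes are combinations of
\[
\omega_P=\frac{P\,\Omega}{f_d^{\,k}},\qquad \Omega=\sum_{i=0}^n(-1)^i a_i x_i\, dx_0\wedge\cdots\wedge\widehat{dx_i}\wedge\cdots\wedge dx_n,
\]
where $\Omega$ is the weighted Euler form, homogeneous of degree $\sum_i a_i$, and the invariance (weight-zero) condition imposes $\deg P=kd-\sum_{i=0}^n a_i$. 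The pole-order filtration on such forms matches, through the residue isomorphism, the Hodge filtration on $H^{2t}(X_d)_{\mathrm{prim}}$: a pole of order $k$ contributes to $F^{2t+1-k}$, equivalently to $H^{2t-j,j}_{\mathrm{prim}}$ with $k=j+1$.

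The heart of the argument is then the reduction-of-pole-order lemma: if $P=\sum_i B_i\,\partial f_d/\partial x_i$ lies in the Jacobian ideal $J_{f_d}$, then $\omega_P$ differs from a form of pole order $k-1$ by an exact form. Iterating, the graded piece of $H^{2t+1}(U)$ at pole order $k$ is identified with the degree-$(kd-\sum a_i)$ part of the Jacobian ring $R_{f_d}=S(\underline a)/J_{f_d}$. Transporting through the residue isomorphism and setting $k=j+1$ yields
\[
h^{2t-j,j}_{\mathrm{prim}}(X_d)=\dim (R_{f_d})_{(j+1)d-\sum_{i=0}^{n}a_i},
\]
which is the claimed formula, with $n=2t+1$.

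The main obstacle is justifying this machinery in the quasi-smooth rather than smooth weighted setting: one must verify that the algebraic de Rham comparison and, above all, the coincidence of the pole-order filtration with the Hodge filtration survive in the presence of the quotient singularities of $\P(\underline a)$. This is precisely where quasi-smoothness of $X_d$ (smoothness of $C^*_X$) and the sheaves of Zariski differentials $\Omega^{[p]}_X=j_*\Omega^p_{X_0}$ intervene, via Steenbrink's construction of the pure Hodge structure on $H^{2t}(X_d)$. The delicate technical inputs are the Bott-type vanishing statements on $\P(\underline a)$ that guarantee, on the one hand, that the only invariant forms contributing are of the displayed shape $P\Omega/f_d^k$, and on the other hand that no lower-order correction terms obstruct the reduction of pole order; once these are secured, the degree bookkeeping with the weights $a_i$ is routine.
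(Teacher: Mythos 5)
The paper does not prove this statement at all: it is quoted verbatim from the literature, with the proof deferred to the cited reference (Dolgachev, Theorem 4.3.2), whose argument is exactly the Griffiths residue calculus you outline — the residue isomorphism $H^{2t+1}(U)\cong H^{2t}(X_d)_{\mathrm{prim}}$, rational forms $P\,\Omega/f_d^{\,k}$ with the weighted Euler form, the identification of the pole-order filtration with the Hodge filtration, and the pole-order reduction via the Jacobian ideal, all made legitimate in the quasi-smooth setting by Steenbrink's pure Hodge structure and Bott-type vanishing on $\P(\underline a)$. Your sketch is correct, the degree bookkeeping $\deg P=(j+1)d-\sum_{i=0}^{2t+1}a_i$ matches the claimed formula, and you have correctly isolated the technical points (de Rham comparison and pole-order/Hodge filtration coincidence on a $V$-manifold) that the cited source is responsible for.
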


\begin{corollary}
A weighted hypersurface $X_d=V(f_d)\subseteq \P(\underline a)$ of dimension $2t$ has a K3 structure in degree $2t$ if and only if $\sum_{i=0}^{2t+1}a_i=d\cdot t$. Furthermore, we have
\[
h^{t,t}_{\mathrm{prim}}(X_d)= \dim (R_{f_d})_{d}.
\]      
\end{corollary}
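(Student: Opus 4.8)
The plan is to derive both statements directly from Griffiths' Residue Theorem (\cref{thm Hodge structure of Xd}), which expresses the primitive Hodge numbers of $X_d$ in terms of the graded pieces of the Jacobian ring $R_{f_d}$. Throughout, the dimension is $2t$, so $n = 2t+1$ and the relevant sum of weights is $\sigma := \sum_{i=0}^{2t+1} a_i$. I would first translate the defining conditions of a K3-type Hodge structure into the indexing of the graded pieces $(R_{f_d})_m$.

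For the first (and main) statement, I would argue as follows. The condition $\max\{q-p \mid H^{p,q}(X_d)\neq 0\} = 2$ together with $h^{t-1,t+1} = 1$ is, up to the primitive-versus-total distinction, a statement about where the nonvanishing primitive cohomology sits. The extremal Hodge piece in degree $2t$ with $q-p$ maximal is $H^{0,2t}_{\mathrm{prim}}$ when $q-p$ would be $2t$, but the K3 condition forces the Hodge structure to be concentrated so that the outermost nonzero piece has $q-p = 2$; concretely this means $h^{2t,0}_{\mathrm{prim}} = h^{0,2t}_{\mathrm{prim}} = 0$ while $h^{t+1,t-1}_{\mathrm{prim}} = h^{t-1,t+1}_{\mathrm{prim}} = 1$. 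Using the theorem with $j = 0$ gives $h^{2t,0}_{\mathrm{prim}} = \dim (R_{f_d})_{d - \sigma}$, and since $R_{f_d}$ is a nonnegatively graded ring with $(R_{f_d})_0 = \C$, this vanishes precisely when $d - \sigma < 0$, i.e.\ $d < \sigma$ (which is anyway guaranteed by the Fano condition). The decisive input is the piece $h^{t-1,t+1}_{\mathrm{prim}}$: setting $j = t+1$ in the theorem yields $h^{t-1,t+1}_{\mathrm{prim}} = \dim (R_{f_d})_{(t+2)d - \sigma}$, and the requirement that this equal $1$, i.e.\ that this graded piece be the one-dimensional socle-type class rather than a higher one, pins down the index. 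The cleanest route is to observe that the K3-type condition says the outermost nonvanishing piece occurs at $j = t+1$ and equals $1$, while the next piece out, at $j = t+2$ (corresponding to $h^{t-2,t+2}_{\mathrm{prim}}$), vanishes; I would show that $h^{t-2,t+2}_{\mathrm{prim}} = \dim(R_{f_d})_{(t+3)d - \sigma} = 0$ forces $(t+3)d - \sigma < 0$ whereas $(t+2)d - \sigma \geq 0$, and the symmetric/middle balancing then collapses to the single Diophantine identity $\sigma = d\cdot t$.

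The slickest way to see that $\sigma = dt$ is exactly right is via the symmetry of the Hodge structure: the primitive Hodge numbers satisfy $h^{p,q}_{\mathrm{prim}} = h^{q,p}_{\mathrm{prim}}$, which through the theorem becomes a symmetry of the indices $(j+1)d - \sigma$ under $j \leftrightarrow 2t - j$. Imposing this symmetry, the two extreme indices $j = 0$ and $j = 2t$ give values $d - \sigma$ and $(2t+1)d - \sigma$, and the self-duality of the K3 Hodge diamond about the center $j = t$ forces the center index to be the natural pivot; requiring the K3-type concentration (top piece at $q - p = 2$, i.e.\ at $j = t+1$, with value $1$) is then equivalent to the single equation $\sigma = dt$. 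I would verify the equivalence in both directions: assuming $\sigma = dt$, a direct substitution into the theorem shows $h^{t+1,t-1}_{\mathrm{prim}} = \dim(R_{f_d})_{(t+2)d - dt} = \dim(R_{f_d})_{2d}$ and $h^{t,t}_{\mathrm{prim}} = \dim(R_{f_d})_{(t+1)d - dt} = \dim(R_{f_d})_{d}$, and that the extremal pieces at $j = 0, \dots, t-2$ and their mirrors vanish because their degree indices become negative — giving precisely the K3-type shape.

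This last substitution also yields the second claim for free: with $\sigma = dt$, the middle Hodge number is $h^{t,t}_{\mathrm{prim}}(X_d) = \dim(R_{f_d})_{(t+1)d - \sigma} = \dim(R_{f_d})_{(t+1)d - dt} = \dim(R_{f_d})_{d}$, exactly as asserted. I expect the main obstacle to be the bookkeeping that distinguishes \emph{primitive} Hodge numbers (which is what Griffiths' theorem computes) from the \emph{full} Hodge numbers appearing in the definition of K3 type; one must check that in the relevant off-diagonal spots the ambient projective space contributes nothing, so that primitive and full coincide there, and that the only correction — the extra $(1,1)$-class from the hyperplane — lands on the diagonal and therefore does not affect the vanishing of $h^{t-1,t+1}$ or the value of the maximal $q-p$. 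Once that identification is in place, the argument is the short chain of inequalities on the degree indices $(j+1)d - \sigma$ described above, and the equivalence $\sigma = dt$ drops out cleanly.
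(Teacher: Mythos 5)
Your ``if'' direction and the identity $h^{t,t}_{\mathrm{prim}}(X_d)=\dim(R_{f_d})_d$ are correct, and they are exactly the substitution into \cref{thm Hodge structure of Xd} that the paper has in mind (the paper offers no further argument). Two small repairs are needed even there: writing $\sigma:=\sum_i a_i$, the piece you compute at $j=t+1$ is $h^{t-1,t+1}_{\mathrm{prim}}=\dim(R_{f_d})_{2d}$, and the equality $\dim(R_{f_d})_{2d}=1$ is not ``for free'' --- it requires Hodge symmetry, equivalently the Gorenstein duality of the Artinian ring $R_{f_d}$, whose socle sits precisely in degree $(2t+2)d-2\sigma=2d$ when $\sigma=dt$. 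Likewise, the mirror pieces at $j\ge t+2$ do not vanish ``because their degree indices become negative'' (those indices are positive); they vanish by the same duality, their degrees exceeding the socle degree. The cleanest route is to check the pieces with $j\le t-1$, where the indices $(j+1)d-\sigma$ really are $\le 0$, and invoke symmetry once.

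The ``only if'' direction is a genuine gap, and it is where your proposal breaks irreparably. First, the two inequalities you announce, $(t+3)d-\sigma<0$ and $(t+2)d-\sigma\ge 0$, are mutually contradictory (the first says $\sigma>(t+3)d$, the second $\sigma\le(t+2)d$); what you presumably intended is to work with the conjugate pieces $h^{t+1,t-1}$ and $h^{t+2,t-2}$, whose indices are $td-\sigma$ and $(t-1)d-\sigma$. But even after that repair, the argument rests on the inference ``a graded piece of $R_{f_d}$ vanishes only in negative degree, and is one-dimensional only in degree $0$.'' This is true when all weights equal $1$ (which is why the statement is immediate for hypersurfaces in $\P^n$), but it is false in the weighted setting: if exactly one weight equals $1$ and the others are $\ge 2$, then $(R_{f_d})_1$ is one-dimensional. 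Your symmetry paragraph (``forces the center index to be the natural pivot'') asserts the desired conclusion rather than proving it, so nothing in the proposal excludes $td-\sigma>0$. In fact no argument can exclude it under the paper's standing hypotheses: the general $X_8\subseteq\P(1,2,2,2,3,5)$ is quasi-smooth, well-formed, not a linear cone and Fano, has $\sigma=15\neq 16=2d$, and yet $h^{4,0}=\dim(R_{f_8})_{8-15}=0$ while $h^{3,1}=\dim(R_{f_8})_{16-15}=\dim(R_{f_8})_1=1$, so its $H^4$ carries a K3-type structure; consistently, $a_4+a_5=d$ here, and the construction of \cref{prop blow-up} realizes this fourfold birationally as the blow-up of $\P(1,2,2,2,3)$ along a quartic K3 surface in $\P(1,2,2,2)\cong\P^3$. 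So the step you need is not merely unproven but false as stated: the ``only if'' half of this corollary (which the paper itself states without proof) cannot be obtained by degree bookkeeping in $R_{f_d}$, and any correct treatment would have to add hypotheses beyond those in force.
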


We conclude that a weighted hypersurface $X_d\subseteq \P(\underline a)$ of dimension $2t$ is a FK3 if and only if $\sum_{i=0}^{2t+1}a_i=d\cdot t$ and $2t\ge 4$. In the case $2t=2$, the weighted hypersurface $X_d$ is an actual K3 surface.

Notice that this numerical condition is of course exactly the same as the one appearing in the derived categorical context in \cite[Corollary 4.2]{Kuz19}. We will come back later to the comparison between our setting and this one.

\begin{definition}\label{def:singularK3}
 A K3 surface is a normal projective surface $S$ with at worst du Val (hence, by \cite[Theorem 4.20]{KM98}, canonical) singularities such that $h^1(S,\mathcal{O}_S)=0$, and $K_S\sim0$. 
\end{definition}

The weighted hypersurface $X_d$ of dimension 2 has du Val singularities by \cite[Corollary 2.9]{ETW23}. Note that the minimal resolution of a K3 surface is a smooth K3 surface, for example, by \cite[Lemma 3.2]{LMM24}. 

\medskip

Finally, given a weighted hypersurface $X_d\subseteq\P(\underline a)$, assume that $\codim_{X_d}X_d^{\mathrm{sing}}\ge 3$, i.e.\ that
\[\gcd(a_0,...,\hat a_i,\hat a_j,...,a_5)=1 \ \ \ \text{and} \ \ \  \gcd(a_0,\dots,\hat a_i,\hat a_j,\hat a_k,\dots,a_5)| d.\]
There is a dual reason to ask for this somewhat arbitrary condition on the dimension of the singular locus. The first one is about a boundedness property. In fact, this condition on the dimension of the singular locus is weaker than terminality, an assumption which would guarantee the existence of a finite number of families of $\mathbf{Q}$-Fano varieties with this hypothesis, see \cref{rmk:unbounded} and also \cref{table sigularities} for a detailed analysis on the type of singularities of our hypersurfaces. Surprisingly, the FK3 condition together with the dimension of the singular locus provides the boundedness. Notice that if we remove the FK3 hypothesis, we can find an infinite number of families of such Fano fourfolds; for example by considering $\P(1^4,k)$ (i.e.\ the cones over the Veronese varieties $\nu_k(\P^3)$), with $k\in \mathbf{Z}_{>0}$. Also, increasing the dimension of the singular locus creates immediately infinitely many families of FK3, see again \cref{rmk:unbounded}. Therefore, the condition on the dimension of the singular locus is at the same time simple and optimal from the boundedness point of view.

The second, and equally important reason, to ask for this bound on the dimension of the singular locus, is that, under this hypothesis, Fano varieties admit a good deformation theory. As in \cite[A.4]{CK99, Sch71}, the deformation theory of our families is controlled by $\textrm{Ext}^1(\Omega^{[1]}_X, \mathcal{O}_X)$, since $H^0(\mathcal{E}xt^1(\Omega^{[1]}_X, \mathcal{O}_X))$ vanishes. Moreover, in our FK3 setting, we can show that $\textrm{Ext}^1(\Omega^{[1]}_X, \mathcal{O}_X) \cong H^1(T_X)$ is actually isomorphic to $H^1(T_S)$, where $S$ is the K3 naturally attached to $X$ via the birational transformation of  \cref{prop blow-up}, see \cref{rmk:defo}.

\subsection{The dimension 4 case} Let $X_d\subseteq \P(\underline a)=\P(a_0,...,a_5)$ be a FK3 weighted fourfold such that $\codim_{X_d}X_d^{\mathrm{sing}}\ge 3$, i.e.\ such that $\dim X_d^{\mathrm{sing}}\le 1$; we assume that the weights are ordered as $a_0\le...\le a_5$. Following \cref{subsec weighted hypersurf and FK3}, our assumptions translate into the following numerical conditions.

\begin{assumption}\label{conditions}The integers $a_i,d$ satisfy the following relations.
\begin{enumerate} 
    \item  $\gcd(a_0,...,\hat a_i,...,a_5)=1$ and \(\gcd(a_0,\dots,\hat a_i,\hat a_j,\dots,a_5)| d\) (well-formedness);
    \item $a_0\le...\le a_5<d$;
    \item $\sum a_i=2d$ (FK3 condition);
    \item \cref{prop quasismooth} is satisfied (quasi-smoothness and being not a linear cone);
    \item  $\gcd(a_0,...,\hat a_i,\hat a_j,...,a_5)=1$ and \(\gcd(a_0,\dots,\hat a_i,\hat a_j,\hat a_k,\dots,a_5)| d\) ($\dim X_d^{\mathrm{sing}}\le 1$).
\end{enumerate}
\end{assumption}
Observe that \cref{conditions}.(1) is implied by \cref{conditions}.(5), but we keep \cref{conditions}.(1) as in what follows it will happen to drop the dimension hypothesis on the singular locus.

\medskip

Our main goal in this section is to prove the boundedness of the FK3 weighted fourfolds with the limitation on the singular locus. We do not assume any extra divisibility condition which would make the combinatorics much easier to handle (in particular, Equation (A.8) in \cite{LPZ18} does not hold in our case).

However, it turns out that our hypotheses impose strong constraints on the weights, as expressed by the following result.

\begin{theorem}\label{thm ai+a5=d}
    Let \(X_d \subseteq \P(a_0, \ldots, a_5)\) be a FK3 weighted fourfold such that $\dim X_d^{\mathrm{sing}}\le 1$. If  $a_5>1$, then there exists an index $i\in\{0,\dots,4\}$ such that $a_i+a_5=d$.
\end{theorem}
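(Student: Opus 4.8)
The plan is to read off numerical consequences from quasi-smoothness at the coordinate point dual to $x_5$ and to confront them with the FK3 relation $\sum a_i = 2d$ and the bound on the singular locus. First I would record the elementary bound: since $a_i \le a_5$ for all $i$, \cref{conditions}.(3) gives $2d = \sum a_i \le 6a_5$, so that $a_5 < d \le 3a_5$. Next, apply \cref{prop quasismooth} to the singleton $I=\{5\}$. Either (1) $a_5 \mid d$, which by the bound forces $d\in\{2a_5,3a_5\}$, or (2) there is an index $j\neq 5$ and an integer $n\ge 1$ (the value $n=0$ being excluded by $a_j\neq d$) with $d-a_j = n a_5$; here $n a_5 = d-a_j \le d-1 < 3a_5$ forces $n\le 2$. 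If $n=1$ then $a_j+a_5=d$ and we are done. Observe that when $a_5\nmid d$ and $a_5<d<2a_5$ only $n=1$ is possible, since $n\ge 2$ would give $a_j=d-2a_5<0$; thus the statement already holds throughout the range $d<2a_5$, and the entire difficulty lies in the regime $d\ge 2a_5$.

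I would then dispose of $d=3a_5$ immediately: here $\sum_{i\le 4} a_i = 2d-a_5 = 5a_5$ with each summand $\le a_5$, forcing $a_0=\dots=a_5=a_5$. Since $a_5>1$, this makes the $\gcd$ of any five weights equal to $a_5>1$, contradicting well-formedness \cref{conditions}.(1). The surviving cases are therefore $d=2a_5$ (from (1)) and $d = a_{j_0}+2a_5 > 2a_5$ with $a_{j_0}=d-2a_5$ and $a_5\nmid d$ (from (2) with $n=2$). In both the desired relation $a_i=d-a_5$ requires $a_i\ge a_5$: when $d=2a_5$ it can hold only if some $a_i$ with $i\in\{0,\dots,4\}$ equals $a_5$, while for $d>2a_5$ it cannot hold at all, so that regime must be shown to be empty.

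The key tool for the crux is the singular-locus hypothesis \cref{conditions}.(5), which I would reformulate as a constraint on shared prime factors. Removing two weights leaves $\gcd 1$: if a prime $p$ divided four of the six weights, removing the other two would leave four multiples of $p$, a contradiction, so no prime divides more than three weights. Removing three weights leaves a $\gcd$ dividing $d$: if a prime $p\nmid d$ divided three weights, keeping exactly those three would give a $\gcd$ divisible by $p$ and not dividing $d$, a contradiction, so any prime not dividing $d$ divides at most two weights. In the regime $d=a_{j_0}+2a_5$, the relation $\sum a_i = 2d$ forces the four weights other than $a_5$ and $a_{j_0}$ to sum to $3a_5 + a_{j_0}$ and hence to cluster near $a_5$; combining this clustering with quasi-smoothness at the next vertex $P_4$ and at the edge $I=\{4,5\}$ (again via \cref{prop quasismooth}, whose condition (1) I expect to fail, throwing the burden onto condition (2)) pins the weights down enough to exhibit a prime dividing too many of them, contradicting the prime-counts above. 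An entirely analogous count in the case $d=2a_5$ — where $\sum_{i\le 4} a_i = 3a_5$ with all summands $\le a_5$ — forces the maximal weight to be attained at least twice, yielding $a_i+a_5 = 2a_5 = d$.

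The main obstacle is precisely this last step: the finite but delicate combinatorial bookkeeping needed to rule out $d>2a_5$ (where the conclusion is literally false) and to force the repeated maximum when $d=2a_5$. The difficulty is compounded by the deliberate absence of any extra divisibility assumption (in particular the simplifying relation used in \cite{LPZ18} is unavailable here), so one must run the full Fletcher criterion at the top weights \emph{simultaneously} with the prime-counting form of the singular-locus bound, rather than relying on a single clean congruence.
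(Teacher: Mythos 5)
Your reduction follows the paper's skeleton exactly: the FK3 relation gives $a_5<d\le 3a_5$; quasi-smoothness at $I=\{5\}$ splits into $a_5\mid d$ or $d=a_j+na_5$ with $n\in\{1,2\}$; the case $d=3a_5$ dies on well-formedness since all weights would equal $a_5>1$; and in the range $d<2a_5$ only $n=1$ survives, which is the desired conclusion. All of this is correct and is precisely how the paper proceeds (it is the easy opening of \cref{prop d<=2a5} together with the closing paragraph of the paper's proof of \cref{thm ai+a5=d}). Your prime-counting reformulation of \cref{conditions}.(5) — no prime divides four of the weights, and a prime not dividing $d$ divides at most two of them — is also correct, and it is the same device the paper deploys repeatedly in the form of parity arguments.

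The genuine gap is that the two regimes carrying all the content of the theorem are never actually proved. Ruling out $d=a_{j_0}+2a_5>2a_5$, and showing that $d=2a_5$ forces $a_4=a_5$, are handled in your write-up only by a strategy sketch ("quasi-smoothness at $P_4$ and at the edge $I=\{4,5\}$ pins the weights down enough to exhibit a prime dividing too many of them"), and you explicitly defer the "delicate combinatorial bookkeeping". That bookkeeping \emph{is} the theorem: the paper spends \cref{prop d<=2a5}, \cref{prop a4 divide d} and \cref{prop a4=a5=d/2} — several pages of nested applications of \cref{prop quasismooth} to $\{3\}$, $\{4\}$, $\{2\}$, $\{3,4\}$, $\{4,5\}$, interleaved with gcd and parity arguments from \cref{conditions}.(5) — on exactly these two regimes. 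Moreover, your assertion that in the case $d=2a_5$ "an entirely analogous count \ldots forces the maximal weight to be attained at least twice" cannot be correct as a counting statement: the family $X_6\subseteq\P(1,2,2,2,2,3)$ (entry 1 of \cref{table fourfolds from del Pezzo}) satisfies the FK3 relation, quasi-smoothness, well-formedness and $d=2a_5$, yet no $a_i$ with $i\le 4$ equals $a_5=3$; it is excluded from the theorem only because its singular locus has dimension $2$. So any correct argument must use \cref{conditions}.(5) in an essential, intertwined way — this is exactly what \cref{prop a4 divide d} does, and what the paper's remark following \cref{table fourfolds from del Pezzo} stresses — and your sketch stops short of doing so.
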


The proof of the theorem is based on the following \cref{prop a4 divide d,prop a4=a5=d/2,prop d<=2a5}. Their proofs consist of elementary but involved computations in $a_i,d$, given \cref{conditions}; we write them in full detail because of the relevance of \cref{thm ai+a5=d} in the perspective of the current paper.

\begin{proposition}\label{prop d<=2a5}
    Let \(X_d \subseteq \P(a_0, \ldots, a_5)\) be a FK3 weighted fourfold such that $\dim X_d^{\mathrm{sing}}\le 1$. If  $a_5>1$, then \(d \leq 2a_5\).
\end{proposition}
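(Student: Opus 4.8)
The plan is to argue by contradiction: assume $d>2a_5$ and extract enough monomial and arithmetic information from \cref{conditions} to reach an absurdity. The starting point is the quasi-smoothness criterion \cref{prop quasismooth} applied to the singleton $I=\{5\}$: either (A) $a_5\mid d$, so $d=ma_5$, or (B) there is an index $j\neq 5$ with $d=ma_5+a_j$ for some $m\in\N$. In case (A), since $d>a_5$ we have $m\ge 2$, and because every $a_i\le a_5$ the FK3 relation \cref{conditions}.(3) gives $2d=\sum_i a_i\le 6a_5$, whence $m\le 3$. If $m=2$ we are done, and if $m=3$ then $\sum_i a_i=6a_5$ forces $a_0=\dots=a_5$, so $\gcd(a_0,\dots,\hat a_i,\dots,a_5)=a_5>1$, contradicting well-formedness \cref{conditions}.(1). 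Thus case (A) yields $d=2a_5$.

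It remains to treat case (B). Here $m=0$ would give $d=a_j\le a_5<d$, so $m\ge 1$. Summing the four weights other than $a_5,a_j$ gives $2d-a_5-a_j=(2m-1)a_5+a_j\le 4a_5$, and since $a_j\ge 1$ this forces $2m-1\le 3$, i.e.\ $m\le 2$. For $m=1$ we get $d=a_5+a_j\le 2a_5$, so the whole statement reduces to excluding $m=2$, that is
\[
d=2a_5+a_j,\qquad a_j<a_5
\]
(the strict inequality because $a_j=a_5$ would put us back in case (A)). Writing the four remaining weights as $w_1\le\dots\le w_4\le a_5$, their sum is $3a_5+a_j$, so $w_1\ge(3a_5+a_j)-3a_5=a_j$; hence $a_j$ is a minimal weight and we may take $a_j=a_0$. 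We are thus reduced to the single configuration $d=2a_5+a_0$ with $a_0<a_5$ and $a_1+a_2+a_3+a_4=3a_5+a_0$.

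This last configuration is the crux, and eliminating it is the main obstacle: forcing four weights bounded by $a_5$ to sum to $3a_5+a_0$ makes them accumulate just below $a_5$, which collides with either the gcd constraints or quasi-smoothness. When $a_4=a_5$ the argument is clean: quasi-smoothness of $I=\{4,5\}$ requires at least two indices $k\notin\{4,5\}$ with $a_5\mid(d-a_k)$, i.e.\ $a_k=a_0$, and the degree equation then forces $(a_0,\dots,a_5)=(a_0,a_0,a_5,a_5,a_5,a_5)$; removing the two copies of $a_0$ leaves $\gcd=a_5>1$, contradicting \cref{conditions}.(5). When $a_4<a_5$ I expect to run a short finite analysis: the relation $a_1+a_2+a_3+a_4=3a_5+a_0\le 4a_4$ pins down how close the $a_i$ must be to $a_5$, forcing heavy repetition among the large weights. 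One then plays the triple-gcd part of \cref{conditions}.(5) (every three weights have gcd dividing $d=2a_5+a_0$) against the singleton quasi-smoothness conditions for $x_4,x_5$ (each needs a monomial $x_i^{m}$ or $x_i^{m}x_k$ of degree $d$): repetition among the large weights tends to produce a triple with $\gcd>1$ not dividing $d$, while avoiding such repetition tends to leave $x_4$ (or $x_5$) uncovered by any degree-$d$ monomial. Ruling out every numerical shape in this dichotomy—without assuming any extra divisibility hypothesis—is where the bulk of the elementary but delicate bookkeeping lies.
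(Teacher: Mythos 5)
Your reduction is sound and follows the same opening moves as the paper's proof: quasi-smoothness on $I=\{5\}$ gives $d=ma_5$ or $d=ma_5+a_j$; the case $a_5\mid d$ is excluded by the gcd conditions; the bound $m\le 2$ follows from summing the weights; and in the residual case $d=2a_5+a_j$ you correctly identify $a_j$ as the minimal weight, so $a_j=a_0$. Your dispatch of the sub-case $a_4=a_5$ via quasi-smoothness on $I=\{4,5\}$ (two indices $k$ with $a_k=a_0$, forcing $(a_0,a_0,a_5,a_5,a_5,a_5)$ and contradicting \cref{conditions}.(5)) is also correct, and is essentially how the paper obtains its inequality $a_4<a_5$.

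However, the remaining case $a_4<a_5$ with $d=2a_5+a_0$ --- which is where essentially all of the work in the paper's proof lies --- is not proved; you only announce a strategy (``play the triple-gcd part of \cref{conditions}.(5) against the singleton quasi-smoothness conditions for $x_4,x_5$'') and concede that ruling out every numerical shape is left to future bookkeeping. This is a genuine gap, not a routine verification. Moreover, the toolkit you propose is likely too weak: the singleton condition for $x_5$ has already been consumed in producing $d=2a_5+a_0$, and the singleton condition for $x_4$ (namely $a_4\mid d$ or $a_4\mid d-a_k$) combined with the triple-gcd constraints does not visibly force a contradiction. The paper instead exploits the \emph{pair} conditions of \cref{prop quasismooth} on $I=\{3,5\}$ and $I=\{4,5\}$ --- which demand at least $|I|=2$ admissible indices $j\notin I$, a much stronger constraint --- together with the strict inequalities coming from $(a_5-a_4)+(a_5-a_3)+(a_5-a_2)=a_1-a_0$, to prove the key Claim that $d=n_3a_3=n_4a_4$ with $n_3,n_4\ge 3$; the contradiction then falls out of a single inequality. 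Without an argument of comparable strength for this configuration, your dichotomy has no demonstrated way to close, so the proposal as written does not establish the proposition.
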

\begin{proof}
Assume by contradiction that \(d > 2a_5\). We know that \(2d= a_0+ \ldots + a_5\leq 6a_5\), then \(d \leq 3a_5\), so we know that \begin{equation}\label{eq}
2a_5 < d \leq 3a_5.
\end{equation} 
We assume the notation in \cref{prop quasismooth}. By the quasi-smoothness hypothesis, if \(I=\{5\}\) we have \(a_5|d\) or \(a_5|(d-a_j)\) for an index \(j \neq 5\).
\begin{enumerate}

\item If \(a_5|d\) then \(d=ma_5\) with \(m \in \Z_{\geq2}\) which implies by \eqref{eq} that \(m=3\). Then \(a_0 + \ldots + a_5=6a_5\) implies \(a_i=a_5\) for all \(i\), which contradicts \cref{conditions}.(1) as $a_5>1$.

\item If \(a_5|(d-a_j)\) then \(d=a_j+ma_5\) with \(m \in \Z_{\geq 1}\). If \(m=1\), then \(d=a_j+a_5 \leq 2a_5\), which is not possible by \eqref{eq}.
If \(m \geq 3\), then \(d=ma_5+a_j>3a_5\), which contradicts \eqref{eq}. Then we have \(m=2\) and \(d=2a_5+a_j\), with \(a_j \leq a_5\).
As a consequence, we have \(a_0 + \ldots a_5= 4a_5+2a_j\) which means \(a_0 + \ldots a_4=3a_5+2a_j\). Since by \cref{conditions}.(1) and $a_5>1$ one has \(a_2+a_3+a_4 < 3a_5\), it turns out that \(a_0+a_1 > 2a_j\) for an index \(j \in \{0, \ldots, 4\}\) which gives \(a_j=a_0\). This implies that \(d=2a_5+a_0\) and furthermore
\begin{equation}\label{eq 0<1}
    a_0<a_1
\end{equation}
\begin{equation}\label{eq 4<5}
    a_4<a_5.
\end{equation}
The first inequality follows from $a_0+a_1>2a_0$. For the second one, assume by contradiction $a_4=a_5$; by quasi-smoothness, if $I=\{4,5\}$ then either $d=na_5=2a_5+a_0$, which is impossible, or there exists an index $j\neq 0,4,5$ such that $d-a_j=na_5$ for $n\ge 1$. If $n=1$ then $d=a_5+a_j=2a_5+a_0$, which is a contradiction, if $n\ge2$ then $2a_5\le na_5= d-a_j<d-a_0=2a_5$ by \eqref{eq 0<1}, contradiction.

From \(d=2a_5+a_0\) we have $2d=4a_5+2a_0=a_0 + \dots +a_5$, thus $3a_5+a_0=a_1+a_2+a_3+a_4$ and $(a_5-a_4)+(a_5-a_3)+(a_5-a_2)=a_1-a_0$. The addenda on the left are strictly positive by \eqref{eq 4<5}, hence
\begin{equation}\label{eq strict pos}
    a_1-a_0>a_5-a_4,\hspace{5mm} a_1-a_0>a_5-a_3,\hspace{5mm} a_1-a_0>a_5-a_2.
\end{equation}
\begin{claim}
    In the assumptions above, $d=n_ia_i$ with $n_i\in\Z_{\ge3}$, for $i=3,4$.
\end{claim}
\begin{proof}
We apply the quasi-smoothness criterion on $I=\{i,5\}$, $i=3,4$. We have the following possibilities.
\begin{enumerate}
    \item $d=na_i+ma_5$, for $n,m\in \mathbf Z_{\ge 0}$. 
    
    If $m\ge2$ then $2a_5+a_0=d=na_i+(m-2)a_2+2a_5$ hence $na_i+(m-2)a_5=a_0$, impossible as $m-2\ge0$ and $a_0<a_i$ by \cref{conditions}.(5) and \eqref{eq 4<5}: $a_0=a_i$ would imply $a_0=\dots=a_3=1$, hence $4+a_4+a_5=2d=4a_5+2$ which gives $a_4+2=3a_5>3a_4$, impossible as it would imply $a_4<1$. 

    If $m=1$ then $2a_5+a_0=d=na_i+a_5$, hence $na_i=a_0+a_5$ and $n\ge2$. Then $(n-1)a_i-a_0=a_5-a_i<a_1-a_0$ by \eqref{eq strict pos}, hence $a_i\le(n-1)a_i<a_1$, which is a contradiction.

    We conclude that $m=0$ and $d=na_i$. From $na_i=d=2a_5+a_0>2a_5$ follows $n\ge 3$, which is our claim.
    \item $d-a_j=na_i+ma_5$, for $n,m\in\mathbf Z_{\ge 0}$ and some $j\neq 0,4,5$. We want to show that this case cannot appear.
    
    By \eqref{eq 0<1} one has $a_0<a_j$, hence $2a_5=d-a_0>d-a_j=na_i+ma_5\ge ma_5$, which gives $m<2$.

    If $m=1$ then $d-a_j=na_i+a_5$, hence $n\ge 1$: if $n=0$ then $d-a_j=a_5=d-a_0$ which contradicts \eqref{eq 0<1} and $j\neq 0$. We have $a_0+\dots + a_5=2d=2na_i+2a_5+2a_j$ hence $a_0+a_1+a_2+a_k=(2n-1)a_i+a_5+2a_j$, where $k=4,3$ if $i=3,4$ respectively. From $n\ge1$ and \eqref{eq 4<5} we have $(2n-1)a_i+a_5\ge a_i+a_5>a_2+a_k$, hence $a_0+a_1>2a_j$ which contradicts \eqref{eq 0<1} and $j\neq 0$.

    We conclude $m=0$ and $d-a_j=na_i$, for $n\ge 1$. If $n=1$ then $a_i+a_j=d=2a_5+a_0>(a_i+a_j)+a_0$, which is a contradiction, hence $n\ge 2$. We have $a_0+\dots +a_5=2d=na_i+a_j+2a_5+a_0$ hence $a_1+a_2+a_k-a_j=a_5+(n-1)a_i$, where $k=4,3$ if $i=3,4$ respectively. We obtain $(a_1-a_j)+a_2+a_k=a_5+(n-1)a_i$, where $a_1-a_j\le 0$ as $j\neq 0$, hence $a_2+a_k\ge (n-1)a_i+a_5\ge a_i+a_5$, which is a contradiction by \eqref{eq 4<5}.
\end{enumerate}
\end{proof}
We conclude that there exists $k,n\in \mathbf Z_{\ge 3}$ such that $d=ka_4=na_3$. Hence $a_0+\dots+a_5=2d=ka_4+na_3$ and we obtain the following impossible inequality: $a_4+2a_3\le (k-2)a_4+(n-1)a_3=a_0+a_1+a_2+(a_5-a_4)<2a_1+a_2\le 2a_3+a_4$, where we used \eqref{eq strict pos} in the strict inequality.
\end{enumerate}
We find that the quasi-smoothness criterion is not satisfied for $I=\{5\}$, hence $d\le 2a_5$.
\end{proof}

\begin{proposition}\label{prop a4=a5=d/2}
    Let \(X_d \subseteq \P(a_0, \ldots, a_5)\) be a FK3 weighted fourfold such that $\dim X_d^{\mathrm{sing}}\le 1$. If $d=2a_5$ and $a_4$ divides $d$, then $a_4=a_5=\frac{d}{2}$.
\end{proposition}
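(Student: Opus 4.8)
The plan is to assume $a_4<a_5$ and derive a contradiction; since $d=2a_5$ gives $a_5=d/2$ immediately, this forces $a_4=a_5=d/2$. First I would pin down the weights arithmetically. As $a_4\mid d$ and $a_4\le a_5=d/2<d$, write $a_4=d/k$ with $k\ge 2$; the goal is $k=2$, so suppose $k\ge 3$, i.e.\ $a_4\le d/3$. By \cref{conditions}.(3) one has $\sum_{i=0}^4 a_i=2d-a_5=\tfrac32 d$, hence $\tfrac32 d=\sum_{i=0}^4 a_i\le 5a_4\le \tfrac53 d$, giving $a_4\ge \tfrac{3}{10}d$. The only divisor of $d$ in $[\tfrac{3}{10}d,\tfrac13 d]$ is $\tfrac13 d$, so $a_4=d/3$ and $6\mid d$. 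Writing $d=6m$ I obtain $a_4=2m$, $a_5=3m$, $\sum_{i=0}^3 a_i=7m$, and (from $a_0\ge 7m-3\cdot 2m=m$) the crucial pinch $m\le a_0\le\cdots\le a_4\le 2m$. The remaining task is to show this configuration violates quasi-smoothness.

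The heart of the argument is two applications of the criterion in \cref{prop quasismooth} to \emph{singletons}, exploiting the narrow window $[m,2m]$. For $I=\{3\}$, since $a_3\ge 7m/4$, the weight $a_3$ must divide one of $d=6m$, $d-a_5=3m$, $d-a_4=4m$, or $d-a_j=6m-a_j\in[4m,5m]$ for some $j\le 2$. In each case the admissible quotient $T/a_3$ lies in a short interval of length less than one, which either rules the case out ($T=3m$) or forces the single integer quotient and hence $a_3=2m$ (for the $j\le 2$ branch one also uses $a_j\le a_3$). Thus $a_3=2m$, so $\sum_{i=0}^2 a_i=5m$ and $a_2\ge 5m/3$. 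Repeating with $I=\{2\}$ identically, all but one branch again force $a_2=2m$; the one exception is the branch $T=6m-a_\ell\in[4m,5m]$ with $\ell\in\{0,1\}$, whose quotient interval is exactly $[2,3]$ of length one and therefore admits the extra value $3$, producing the exceptional possibility $a_2=5m/3$ together with $a_\ell=m$.

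Both outcomes then collapse. If $a_2=2m$, then $a_2=a_3=a_4=2m$ and $\gcd(a_2,a_3,a_4,a_5)=m$, so \cref{conditions}.(5) forces $m=1$, leaving $(a_0,\dots,a_5)=(1,2,2,2,2,3)$, which itself violates \cref{conditions}.(5) via $\gcd(a_1,a_2,a_3,a_4)=2$. If instead $a_2=5m/3$ and $a_\ell=m$, then the other of $a_0,a_1$ equals $\tfrac{10}{3}m-m=\tfrac73 m>\tfrac53 m=a_2$, contradicting $a_0\le a_1\le a_2$. Hence $a_4=d/3$ is impossible, $k=2$, and $a_4=a_5=d/2$.

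I expect the main obstacle to be the two singleton computations, and specifically making rigorous that the admissible quotients $T/a_i$ collapse to a single integer: this rests entirely on the tight range $a_i\in[m,2m]$, and one must treat with care the one genuine escape route ($a_2=5m/3$) permitted by the length-one interval $[2,3]$ at $I=\{2\}$, showing it is incompatible with the ordering of the weights. The preliminary reduction to $d=6m$, $a_4=2m$, $a_5=3m$ is routine arithmetic by comparison.
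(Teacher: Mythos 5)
Your proof is correct and takes essentially the same route as the paper's: the sum bound $\sum_{i=0}^4 a_i=\tfrac32 d\le 5a_4$ forces $a_4=d/3$, the singleton quasi-smoothness criterion at indices $3$ and then $2$ pinches $a_2=a_3=a_4=d/3$ (this is the paper's \cref{claim 2a5=3a4=2}), and \cref{conditions}.(5) then gives $m=1$ and the weight vector $(1,2^4,3)$, which violates \cref{conditions}.(5) again. Your interval-pinching organization of the divisibility cases, including the one exceptional branch $d-a_\ell=3a_2$ with $\ell\in\{0,1\}$ killed by the ordering of the weights, is just a tidier packaging of the same case analysis the paper carries out.
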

\begin{proof}
If \(a_4|d\) then \(d=ka_4\) with \(k \in \Z_{\geq 2}\), as $a_4\le a_5$. Hence it holds \[a_0 + \ldots + a_4=2d-a_5=2ka_4-\frac{ka_4}{2}=\frac{3ka_4}{2}.\] By \cref{conditions}.(2) we have \(a_0+ \ldots + a_4 \leq 5a_4\), then 
\(\frac{3ka_4}{2} \leq 5a_4,\) which implies that \(3k \leq 10\), i.e.\ \(k\in \{2,3\}\) and consequently \(a_4=\frac{d}{2}\) or \(a_4=\frac{d}{3}\).  Assume by contradiction that $a_4=\frac{d}{3}$. 

\begin{claim}\label{claim 2a5=3a4=2}
    Let $X_d\subseteq \P(a_0,\dots,a_5)$ be a weighted FK3 satisfying \cref{conditions}.(1)-(4) and such that $d=2a_5=3a_4$. Then there exists an integer $k\ge 1$ such that \[(a_2,a_3,a_4,a_5,d)=(2k,2k,2k,3k,6k).\]
\end{claim}
\begin{proof}
We assume the notation in \cref{prop quasismooth}. By quasi-smoothness if $I=\{3\}$ we have the following possibilities:
\begin{enumerate}
    \item $d=ka_3$, with $k>2$ as $a_3\le a_4<a_5=\frac{d}{2}$. We have $a_5=\frac{k}{2}a_3$ and $a_4=\frac{k}{3}a_3$, hence 
    \[4a_3\ge a_0+a_1+a_2+a_3=2d-a_5-a_4=2ka_3-\frac{k}{2}a_3-\frac{k}{3}a_3=\frac{7}{6}ka_3\]
    hence $k\le\frac{24}{7}$, i.e.\ $k\le 3$; we conclude $k=3$, that is $a_3=a_4=\frac{d}{3}$.
    \item $d=ka_3+a_i$ for some $i\neq 3$ and $k\in \Z_{\ge1}$. If $i=5$ then $d=2ka_3$, that is a particular case of (a) above, hence $a_3=a_4=\frac{d}{3}$. 
    
    If $i<5$, assume by contradiction that $a_3<a_4$. We get $d=ka_3+a_i<ka_4+a_i=\frac{k}{3}d+a_i$, hence $\frac{d}{3}\ge a_i>\frac{3-k}{3}d$ which implies $k>2$. Furthermore $a_0+a_1+a_2+a_3=2d-a_5-a_4=2d-\frac{d}{2}-\frac{d}{3}=\frac{7}{6}d$, and from $a_0\le a_1\le a_2\le a_3$ we obtain $a_3\ge \frac{1}{4}(\frac{7}{6}d)=\frac{7}{24}d$, that gives $d<4a_3$ and hence $k\le 3$. We conclude that $k=3$ and $d=3a_3+a_i$. Finally $a_0+\dots +a_5=2d=3a_3+a_i+2a_5$ and $2a_5>a_4+a_5$ give $a_0+a_1+a_2>2a_3+a_i$, which is a contradiction for every $i$.
    
\end{enumerate}
We conclude that $a_3=a_4=\frac{d}{3}$. In order to argue on $a_2$ we apply the quasi-smoothness criterion on $I=\{2\}$, hence one of the following possibilities holds true:
\begin{enumerate}
    \item $d=ka_2$ for some $k>2$ (as $a_2<a_5$). We have $a_5=\frac{k}{2}a_2$ and $a_4=a_3=\frac{k}{3}a_2$, hence 
    \[3a_2\ge a_0+a_1+a_2=2d-a_5-a_4-a_3=2ka_2-\frac{k}{2}a_2-2\frac{k}{3}a_2=\frac{5}{6}ka_3\]
    hence $k\le\frac{18}{5}$, i.e.\ $k\le 3$; we conclude $k=3$, that is $a_2=a_3=a_4=\frac{d}{3}$.
    \item $d=ka_2+a_i$ for some $i\neq 2$ and $k\in \Z_{\ge1}$. If $i=5$ then $d=2ka_2$, that is a particular case of (a) above, hence $a_2=a_3=a_4=\frac{d}{3}$. 
    
    If $i<5$, assume by contradiction that $a_2<a_3$. We get $d=ka_2+a_i<ka_3+a_i=\frac{k}{3}d+a_i$, hence $\frac{d}{3}\ge a_i>\frac{3-k}{3}d$ which implies $k>2$. Furthermore $a_0+a_1+a_2=2d-a_5-a_4=-a_3=2d-\frac{d}{2}-2\frac{d}{3}=\frac{5}{6}d$, and from $a_0\le a_1\le a_2$ we obtain $a_2\ge \frac{1}{3}(\frac{5}{6}d)=\frac{5}{18}d$, that gives $d<4a_2$ and hence $k\le 3$. We conclude that $k=3$ and 
    \begin{equation}\label{eq d=3a2+ai}
        d=3a_2+a_i.
    \end{equation}
    We have $a_0+\dots +a_5=2d=3a_2+a_i+2a_5$, and $2a_5>a_4+a_5$ gives $a_0+a_1+a_3>2a_2+a_i$ which is a contradiction for $i=3,4$. 
    For the case $i=0,1$, we fix the notation
    \((a_3,a_4,a_5,d)=(2k,2k,3k,6k)\), hence $a_0+a_1+a_2=2d-a_5-a_4-a_3=5k$. We have $3a_2\ge a_0+a_1+a_2=5k$, hence from \eqref{eq d=3a2+ai} we obtain $6k\ge 5k+a_i$ and $a_i\le k$. Furthermore $a_0+a_1+a_2=5k=a_3+a_4+k$, hence $a_i=(a_3-a_j)+(a_4-a_2)+k$ where $j=1,0$ for $i=0,1$ respectively. If $a_2=a_3$ we are in case (1) above, otherwise $a_2<a_3$ implies $a_3-a_j>0$, $a_4-a_2>0$ and then $k<a_i\le k$, which is a contradiction.     
\end{enumerate}
We arrived to $a_2=a_3=a_4=\frac{d}{3}$, that implies the claim. 
\end{proof}
We arrived to $(a_2,a_3,a_4,a_5,d)=(2k,2k,2k,3k,6k)$ for some $k\ge 1$. To conclude, we apply \cref{conditions}.(5) and we obtain $k=1$, hence $d=6$ and $(\underline a)=(1,2^4,3)$, which contradicts again \cref{conditions}.(5). We find that this case is not possible and $a_4=a_5=\frac{d}{2}$.
\end{proof}

\begin{proposition}\label{prop a4 divide d}Let \(X_d \subseteq \P(a_0, \ldots, a_5)\) be a FK3 weighted fourfold such that $\codim_{X_d}X_d^{\mathrm{sing}}\ge 3$. If $d=2a_5$ then \(a_4\) divides $d$.
\end{proposition}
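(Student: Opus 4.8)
The plan is to apply the quasi-smoothness criterion of \cref{prop quasismooth} to the singleton $I=\{4\}$ and to show that the only surviving case forces $a_4\mid d$. Since $a_4\le a_5$ and $d=2a_5$, if $a_4=a_5$ then $d=2a_4$ and we are done; so I would assume $a_4<a_5$ and, for contradiction, that $a_4\nmid d$. Case (1) of \cref{prop quasismooth} for $I=\{4\}$ reads $d=n_4a_4$, which is exactly $a_4\mid d$; hence it fails, and case (2) must hold: there is an index $j\neq 4$ with $d-a_j=n_4a_4$ for some $n_4\ge 1$ (note $n_4\ge1$ since $a_j<d$). If $j=5$, then $d-a_5=a_5=n_4a_4$, so $a_4\mid a_5\mid d$, again a contradiction. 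Therefore $j\le 3$, and $2a_5-a_j=n_4a_4$.

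Next I would pin down $n_4$. The FK3 relation \cref{conditions}.(3) gives $\sum_{i=0}^4 a_i=2d-a_5=3a_5$, and since $a_i\le a_4$ for $i\le 4$ this yields $3a_5\le 5a_4$, i.e.\ $a_5\le \tfrac53 a_4$. Combining $n_4a_4=2a_5-a_j$ with $a_j\le a_4$ and $a_5>a_4$ gives $a_4<n_4a_4<2a_5\le\tfrac{10}{3}a_4$, hence $n_4\in\{2,3\}$. The value $n_4=3$ is excluded quickly: then $a_j=2a_5-3a_4$, and the three weights indexed by $\{0,1,2,3\}\smallsetminus\{j\}$ sum to $(3a_5-a_4)-a_j=a_5+2a_4$; as each is $\le a_4$, this forces $a_5+2a_4\le 3a_4$, i.e.\ $a_5\le a_4$, contradicting $a_4<a_5$.

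The remaining, and genuinely hard, case is $n_4=2$, where $a_j=2(a_5-a_4)$; here $a_j<a_4$ (otherwise $a_j=a_4$ gives $a_5=\tfrac32 a_4$ and $a_4\mid d$), so $a_4<a_5<\tfrac32 a_4$, and the three weights other than $a_j$ sum to $a_4+a_5$. In this regime neither ingredient alone suffices: for instance $(a_0,\dots,a_5)=(2,3,5,5,6,7)$ with $d=14$ satisfies \cref{conditions}.(3), has $d=2a_5$ and $a_4=6\nmid 14$, and even verifies \cref{conditions}.(5), yet it is ruled out only because quasi-smoothness fails for $I=\{2\}$ (there is no monomial in $x_2$ of degree $d$, since $5\nmid d$ and $5\nmid d-a_k$ for every $k$). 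Accordingly, the plan is to feed the relation $a_j=2(a_5-a_4)$ and the constraint $\sum_{i=0}^4 a_i=3a_5$ into \cref{prop quasismooth} applied to the remaining singletons $I=\{0\},\dots,\{3\}$, which yields for each $i$ a congruence $d\equiv 0$ or $d\equiv a_k\pmod{a_i}$; these restrict the admissible weight vectors to a short list of shapes, in each of which a common prime factor (typically $2$, or $\gcd(a_4,a_5-a_4)$) propagates either to four of the six weights or to three weights whose gcd does not divide $d$, contradicting \cref{conditions}.(5).

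I expect the bookkeeping in this last step to be the main obstacle: one must simultaneously exploit quasi-smoothness (to force divisibilities among the $a_i$) and the codimension-$\ge 3$ hypothesis \cref{conditions}.(5) (to derive the final gcd contradiction), the example above showing that each is individually insufficient. A clean organization is to subdivide according to which of $a_0,\dots,a_3$ equals $2(a_5-a_4)$ and to the value of $\gcd(a_4,a_5)$, mirroring the nested-claim structure already used in \cref{prop d<=2a5} and \cref{prop a4=a5=d/2}.
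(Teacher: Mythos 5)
Your reduction is correct and essentially coincides with the paper's: applying \cref{prop quasismooth} to $I=\{4\}$, discarding $j=5$, bounding the coefficient so that only $d=2a_4+a_j$ with $j\le 3$ and $a_j=2(a_5-a_4)<a_4$ survives (the paper reaches the same point by treating $k=1$, $k\ge 3$, $k=2$ separately, and records it as $a_j=2\alpha$, $a_5=a_4+\alpha$). Your exclusion of $n_4=3$ via the sum $\sum_{i=0}^{3}a_i=3a_5-a_4$ is a nice shortcut, and your example $(2,3,5,5,6,7)$, $d=14$, correctly demonstrates that in the surviving regime neither \cref{conditions}.(5) nor the singleton-$\{4\}$ condition alone can finish the job.

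The gap is that the entire remaining content of the proposition --- showing that $d=2a_4+a_j$ with $a_j=2(a_5-a_4)$, $j\le3$, is impossible --- is asserted as a plan rather than proved. Your claim that quasi-smoothness on the singletons $I=\{0\},\dots,\{3\}$ ``restricts the admissible weight vectors to a short list of shapes'' in which a prime factor propagates to contradict \cref{conditions}.(5) is exactly the hard part, and it is not substantiated. Moreover, the paper's own argument suggests singletons are not enough: after splitting into $j\in\{2,3\}$ and $j\in\{0,1\}$, the paper must invoke the quasi-smoothness criterion on the \emph{pair} $J=\{3,4\}$ (whose condition (2) demands at least two indices $l\notin J$ with $d-a_l=na_3+ma_4$), and only inside those pair-cases do the parity and gcd arguments with \cref{conditions}.(5) close up --- including residual families such as $(2,3,k-2,k-2,k-1,k)$ and $(4,2k-1,2k-1,2k,3k-2,3k)$ that require yet further singleton checks to kill. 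Until you either carry out a case analysis of comparable completeness or exhibit a genuinely shorter mechanism that provably covers all shapes (including pair conditions, not just congruences $d\equiv 0$ or $a_k \pmod{a_i}$), the proposal is an outline of strategy, not a proof.
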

\begin{proof}
We assume the notation in \cref{prop quasismooth}. By quasi-smoothness if \(I=\{4\}\) we have \(a_4|d \) or \(a_4|d-a_j\) for an index \(j \neq 4\). Assume by contradiction that $a_4$ does not divide $d$, hence \(a_4|(d-a_j)\), i.e.\ 
\[d=ka_4+a_j,\hspace{3mm} j\neq4,\ k\in\Z_{\ge 1}.\] 
If \(j=5\) then $ka_4=\frac{d}{2}$ and $a_4$ divides $d$, that is our claim. Consider the case $j\le 3$. 

If $k=1$ then $a_4\ge a_j=d-a_4$, hence $2a_4\ge d=2a_5$ gives $a_4=a_5$ and $a_4$ divides $d$. If $k\ge 3$ then $a_0+\dots + a_5=2d=2a_5+ka_4+a_j$, hence $a_0+a_1+a_2+a_3=a_5+(k-1)a_4+a_j$; from $a_j\ge a_0$ we get $a_1+a_2+a_3\ge a_5+(k-1)a_4$, with $k-1\ge 2$, hence $a_1=\dots=a_5$, which is a contradiction with \cref{conditions}.(5) and $a_5>1$. 

We are left with the case $k=2$, i.e.\ \[d=2a_4+a_j,\hspace{3mm} j\le3.\] 
Observe that $a_j<a_4$, otherwise $a_4$ divides $d$. The integer $d$ is an even number by assumption, hence the equality above gives 
\begin{equation}\label{eq aj d a5}
a_j=2\alpha,\hspace{3mm}d=2(a_4+\alpha),\hspace{3mm}a_5=a_4+\alpha
\end{equation}
for some $\alpha\ge 1$. Furthermore, $a_0+\dots+a_4=2d-a_5=3a_4+3\alpha$, then $a_0+a_1+a_2+a_3=2a_4+3\alpha=2a_4+a_j+\alpha>a_1+a_2+a_3+\alpha$, as otherwise $a_2=a_3=a_4$ would give $a_4|d$ by \cref{conditions}.(5). We conclude that 
\begin{equation}\label{eq a0>alpha}
    a_0>\alpha.
\end{equation}

\begin{claim}
    In the above assumptions, $d=2a_4+a_j$ is not possible for $j=2,3$.
\end{claim}
\begin{proof}
Assume $j\in\{2,3\}$. We have $a_0+a_1+a_i=2d-a_5-a_4-a_j=2(2a_4+\alpha)-(a_4+\alpha)-a_4-2\alpha=2a_4+\alpha$, where $\{i,j\}=\{2,3\}$. This gives:
\begin{description}
    \item[$j=3$] using that $a_0+a_1+a_2\le 3a_3=6\alpha$, we get
    \begin{equation}\label{eq 2a4<=5alpha}
    2a_4\le 5\alpha;
    \end{equation}
    \item[$j=2$] using that $a_0+a_1+a_3< 2a_2+a_4=4\alpha+a_4$, we get
    \begin{equation}\label{eq a4<=4alpha}
    a_4 < 3\alpha.
    \end{equation}
\end{description}
The quasi-smoothness criterion on $I=\{j\}$ gives the following options.
\begin{enumerate}
    \item $d=ka_j$ for some $k>3$, as $ka_j=2a_4+a_j$ and $a_j<a_4$. Furthermore $(k-1)a_j=2a_4$ gives $a_4=(k-1)\alpha$, that combined with \eqref{eq 2a4<=5alpha} for $j=3$ and with \eqref{eq a4<=4alpha} for $j=2$ gives $k\le3$, which is a contradiction.
    \item $d=ka_j+a_i$ for some $i\neq j$ and $k\in \mathbf Z_{\ge 1}$, that combined with $d=2a_4+a_j$ gives $(k-1)a_j+a_i=2a_4$. \\
    If $i=5$ then $k\ge2$, as $k=1$ would give that $a_4$ divides $a_5$ hence $d$. We have $2(k-1)\alpha+(a_4+\alpha)=2a_4$, hence $a_4=(2k-1)\alpha$ that combined with \eqref{eq 2a4<=5alpha} for $j=3$ and with \eqref{eq a4<=4alpha} for $j=2$ gives $k\le 1$, which is a contradiction. \\
    If $i=4$ we have $(k-1)a_j+a_4=2a_4$, hence $(k-1)a_j=a_4$ and $k>2$ as $a_j<a_4$. Furthermore $a_4=2(k-1)\alpha$ that combined with \eqref{eq 2a4<=5alpha} for $j=3$ and with \eqref{eq a4<=4alpha} for $j=2$ gives $k\le 2$, which is a contradiction. \\
    If $i=3$ then $j=2$, and from $a_0+a_1+a_3=2a_4+\alpha=(k-1)a_2+a_3+\alpha=(2k-1)\alpha+a_3$ we get $(2k-1)\alpha=a_0+a_1\le 2a_2=4\alpha$. It follows $2k-1\le 4$ hence $k\le 2$, which is impossible as $2a_4=(k-1)a_2+a_3<ka_4$ hence $k>2$. \\
    If $i\le 2$ we have $2a_4=(k-1)a_j+a_i<ka_4$ hence $k>2$. We argue here differently for the case $j=2$ or 3. If $j=3$ we use that $a_0+a_1+a_2=2a_4+\alpha=(k-1)a_3+a_i+\alpha=(2k-1)\alpha+a_i$, that gives $(2k-1)\alpha=a_{i_1}+a_{i_2}\le 2a_3=4\alpha$, for $\{i,i_1,i_2\}=\{0,1,2\}$. It follows $2k-1\le 4$, hence $k\le 2$, which is a contradiction.
    For $j=2$ we have $i\le 1$ and $a_0+a_1+a_3=(2k-1)\alpha+a_i$ gives, for $j$ such that $\{i,j\}=\{0,1\}$, that $(2k-1)\alpha=a_j+a_3\le a_2+a_4=2\alpha+(k-1)\alpha+\frac{a_i}{2}\le (k+2)\alpha$, as $a_i\le a_2=2\alpha$. We conclude $k\le 3$ hence $k=3$. We obtain $2a_4=2a_2+a_i\le3a_2=6\alpha$, and from $a_j+a_3=5\alpha$ and $a_0\le a_2=2\alpha$ that $3\alpha\le a_3\le a_4$, hence $a_3=a_4=3\alpha$ and $\alpha$ divides $gcd(a_2,a_3,a_4,a_5)$, which means $\alpha=1$ by \cref{conditions}.(5). This gives $a_2=2$, $a_3=a_4=3$, $a_5=4$ and by \eqref{eq a0>alpha} we have $1=\alpha<a_0\le a_1\le a_2=2$ hence also $a_0=a_1=2$, which contradicts \cref{conditions}.(5). 
\end{enumerate}
We reached a contradiction with all possible options, hence the claim.
\end{proof}
\begin{claim}
   In the above assumptions, $d=2a_4+a_j$ is not possible for $j=0,1$.
\end{claim}
\begin{proof}
Assume $j=0,1$ and define $i$ to be such that $\{i,j\}=\{0,1\}$. Using \eqref{eq aj d a5} and $a_i+a_2+a_3=2d-a_5-a_4-a_j$ we get
\begin{equation}\label{eq ai+a2+a3}
    a_i+a_2+a_3=2a_4+\alpha.
\end{equation} 
As $a_i\ge a_0>\alpha$ by \eqref{eq a0>alpha}, we have $a_2<a_4$. 

The quasi-smoothness criterion on $I=\{j\}$ gives the following options.
\begin{enumerate}
\item $d=ka_j=2a_4+a_j$ for some $k\in\Z_{\ge 1}$, that gives $2a_4=(k-1)a_j=2(k-1)\alpha$ hence, using \eqref{eq aj d a5} and \eqref{eq ai+a2+a3}:
\begin{equation}\label{eq aj a4 a5 somma d}
    a_j=2\alpha,\hspace{3mm} a_4=(k-1)\alpha,\hspace{3mm} a_5=k\alpha,\hspace{3mm} a_i+a_2+a_3=(2k-1)\alpha,\hspace{3mm} d=2k\alpha.
\end{equation}
Note that $(k-1)\alpha=a_4>a_j=2\alpha$ gives $k>3$.  

We apply the quasi-smoothness criterion on $J=\{3,4\}$. 
\begin{enumerate}
    \item $d=na_3+ma_4$, for some $n,m\in\Z_{\ge 0}$. Observe that $2a_4+a_j=d=na_3+ma_4\ge na_j+ma_4$ gives $m\le 2$, otherwise $n=0$ hence $a_4$ divides $d$. 
    \begin{description}
        \item[$m=2$] this gives $na_3=a_j$, then $n=1$, $a_j=a_2=a_3$ and we conclude $\alpha=1$, hence $a_j=a_2=a_3=2$ and also $a_i=2$: this follows from $a_0=a_2=2$ when $j=0$, and from $1=\alpha<a_0\le a_1=2$ when $j=1$. This contradicts \cref{conditions}.(5).
        \item[$m=1$] this gives $na_3=a_4+a_j$, hence $n\ge 2$, and $na_3=(k+1)\alpha$. Using \eqref{eq aj a4 a5 somma d} we get $na_3+a_5=(2k+1)\alpha=a_0+a_1+a_2+a_3$, then $(n-1)a_3+a_5=a_0+a_1+a_2$ and $n\le 2$. We conclude $n=2$ and $2a_3=(k+1)\alpha$, hence $\frac{3}{2}(k+1)\alpha=3a_3\ge a_i+a_2+a_3=(2k-1)\alpha$ and $k\le 5$, that gives $k=4,5$. If $k=4$ then $2a_3=5\alpha$ implies that $\alpha$ is even, hence by \eqref{eq aj a4 a5 somma d} the same holds true for $a_j$, $a_4$, $a_5$ and (at least) one among $a_i$, $a_2$, $a_3$, which contradicts \cref{conditions}.(5). If $k=5$ then $a_i=a_2=a_3=3\alpha$, hence $\alpha=1$ and $gcd(a_i,a_2,a_3)=3\nmid 10=d$, which contradicts again \cref{conditions}.(5).
        \item[$m=0$] this gives $d=na_3=2a_4+a_j$, hence $n\ge 3$. Furthermore, using \eqref{eq aj a4 a5 somma d}: $na_3=2k\alpha=a_i+a_2+a_3+\alpha$ then $(n-1)a_3=a_i+a_2+\alpha<3\alpha$ (as $a_3\ge a_0>\alpha$) and we get $n<4$. We conclude $n=3$ and $3a_3=2k\alpha$, hence $a_3$ is even; as also $a_j$ and (at least) one among $a_4$, $a_5$ is even, by \cref{conditions}.(5) $a_2$ and $a_i$ are odd numbers, hence $a_i+a_2+a_3=(2k-1)\alpha$ is even, then $\alpha$ is even and both $a_4$ and $a_5$ are even number, which is again a contradiction with \cref{conditions}.(5).
    \end{description}
\item There exists $|J|=2$ indices $l\notin \{3,4\}$ such that $d-a_l=na_3+ma_4$, $n,m\in\Z_{\ge 0}$. The equation is satisfied for $l=j$ by taking $n=0$, $m=2$, hence the condition above is equivalent to $d-a_l=na_3+ma_4$ for some $l\in\{i,2,5\}$. 

If $l=5$ then $\frac{d}{2}=na_3+ma_4$, hence $d=2na_3+2ma_4$ and we are back to case (a) above. We are left with the case $l\le2$, $l\neq j$, i.e.\ $l\in\{i,2\}$. We define $l'$ to be such that $\{l,l'\}=\{i,2\}$. Note that under this notation the 4th equality in \eqref{eq aj a4 a5 somma d} reads as
\begin{equation}\label{eq al+al'+a3=2k-1 alpha}
    a_l+a_{l'}+a_3=(2k-1)\alpha.
\end{equation}
From $d=a_l+na_3+ma_4=2a_4+a_j$ we deduce $m\le 2$.
\begin{description}
    \item[$m=2$] in this case $n=0$, $a_l=a_j=2\alpha$ hence $\alpha=1$. From \eqref{eq aj a4 a5 somma d} we have that also one among $a_4$, $a_5$ is even, and from \eqref{eq al+al'+a3=2k-1 alpha} also one among $a_{l'}$ and $a_3$ is even. We obtain four even weights, which contradicts \cref{conditions}.(5).
    \item[$m=1$] in this case $2a_4+a_j=a_l+na_3+a_4$, i.e.\ $a_4+a_j=a_l+na_3=(k+1)\alpha$ and $n\ge 1$. Using \eqref{eq aj a4 a5 somma d} and \eqref{eq al+al'+a3=2k-1 alpha} we obtain  
    $a_{l'}+a_l+a_3+a_j=(2k+1)\alpha=a_l+na_3+a_5$, that gives $a_{l'}+a_j=(n-1)a_3+a_5$ and then $n=1$, $a_{l'}=(k-2)\alpha$ and $\alpha=1$. Then we have $a_l+a_3=k+1$, and from $k-2=a_{l'}\le a_3\le a_4=k-1$ we further obtain that either $(a_l,a_3)=(2,k-1)$ or $(a_l,a_3)=(3,k-2)$. 

    To conclude, we analyze what happens depending on $l$. If $(l,l')=(2,i)$ then $a_i=k-2\le a_2\le 3$, hence $k\le5$, i.e.\ $k=4$ or 5. If $k=4$ then the option $(a_2,a_3)=(3,k-2)=(3,2)$ 
    is not possible, hence $(a_i,a_j,a_2,a_3,a_4,a_5)=(2,2,2,3,3,4)$ which contradicts \cref{conditions}.(5). If $k=5$ then either $(a_i,a_j,a_2,a_3,a_4,a_5)=(3,2,2,4,4,5)$ or $(a_i,a_j,a_2,a_3,a_4,a_5)=(3,2,2,4,4,5)$ and $d=10$, both contradicting \cref{conditions}.(5).

    We are left with the case $(l,l')=(i,2)$ and 
    $(a_i,a_j,a_2,a_3,a_4,a_5)=(2,2,k-2,k-1,k-1,k)$ or $(a_i,a_j,a_2,a_3,a_4,a_5)=(3,2,k-2,k-2,k-1,k)$, hence by \cref{conditions}.(5) we get that $k$ is odd and $(a_0,a_1,a_2,a_3,a_4,a_5)=(2,3,k-2,k-2,k-1,k)$; furthermore if $k\equiv 2$ mod 3 then $3=gcd(2,k-2,k-2)$ does not divide $d=2k\equiv 1$ mod 3, contradicting \cref{conditions}.(5). We conclude $k\equiv 0$ mod 3 and $k>3$ odd, hence $k=6\beta+3$ for some $\beta\in\Z_{\ge 1}$. In order to exclude this last possibility, we apply the quasi smoothness criterion on $\{3\}$ and we get that $a_3|d$ or $a_3|d-a_j$ for some $j\neq 3$. If $a_3|d$ then $d=2(6\beta+3)=na_3=n(6\beta+1)$ for some $n\ge 1$, that gives $(12-6n)\beta=n-6$, which is impossible as the left term of the equality is positive for $n\le2$ and the right term for $n\ge 6$. For the remain case $a_3|d-a_j$ it is enough to write $d-a_j=na_3$ in terms of $\beta$ and $n$ as done before, writing appropriately $a_j$ accordingly with $(a_0,a_1,a_2,a_3,a_4,a_5)=(2,3,k-2,k-2,k-1,k)$; we leave to the reader to check that it gives a contradiction for any choice of $j\neq 3$, going exactly as the one done before.
    \item[$m=0$] in this case $2a_4+a_j=a_l+na_3$, that gives $n\ge 2$. Using \eqref{eq aj a4 a5 somma d} and \eqref{eq al+al'+a3=2k-1 alpha} we obtain $a_l+na_3=2k\alpha=a_{l'}+a_l+a_3+\alpha$ hence $(n-1)a_3=a_{l'}+\alpha$ and $n\le 2$. We conclude $n=2$ and $2a_4+a_j=a_l+2a_3$, hence $a_l$ is also even, together with $a_j$ and (at least) one among $a_4$ and $a_5$; \cref{conditions}.(5) implies that both $a_3$ and $a_{l'}$ are odd numbers, hence $a_l+a_{l'}+a_3=(2k-1)\alpha$ is even which means that $\alpha$ is even, that implies that both $a_4$ and $a_5$ are even numbers, which is impossible as it contradicts \cref{conditions}.(5).
\end{description}
\end{enumerate}
\item $d=a_h+ka_j=2a_4+a_j$, for some $h\neq j$ and $k\in\Z_{\ge1}$, that gives 
\begin{equation}\label{eq 2a4}
    2a_4=(k-1)a_j+a_h=2(k-1)\alpha+a_h
\end{equation}
If $h=5$ then using \eqref{eq aj d a5} we get $2a_4=2(k-1)\alpha+(a_4+\alpha)$, hence $a_4=(2k-1)\alpha$, $d=4k\alpha=2ka_j$ and $a_j|d$, i.e.\ we are back to case (1). If $h=4$ then using \eqref{eq aj d a5} we get $a_4=2(k-1)\alpha$, $d=2(2k-1)\alpha=(2k-1)a_j$ and $a_j|d$, i.e.\ we are again back to case (1). We arrived to $h\le 3$, i.e.\ $h\in\{i,2,3\}$, and from \eqref{eq 2a4} we have $k\ge 3$. Observe that \eqref{eq 2a4} gives that $a_h$ is an even number; as $a_j=2\alpha$ is also even, \cref{conditions}.(5) leads to a contradiction as soon as we find that (at least) two further weights are even numbers. We will use this argument many times in what follows.

We apply the quasi-smoothness criterion on $J=\{3,4\}$.
\begin{enumerate}
\item $d=na_3+ma_4$, for some $n,m\in\Z_{\ge0}$. Observe that $2a_4+a_j=d=na_3+ma_4\ge na_j+ma_4$ gives $m\le 2$, otherwise $n=0$ hence $a_4$ divides $d$. 
\begin{description}
    \item[$m=2$] in this case $d=na_3+2a_4=2a_4+a_j$ hence $na_3=a_j$, that gives $n=1$, $a_3=a_j$. If $h=3$, we obtain $d=a_3+ka_j=(k+1)a_j$, hence $a_j|d$ and we are back to the case (1) above. If $h\neq3$, i.e.\ $h\in \{i,2\}$, we call $h'$ the index such that $\{h,h'\}=\{i,2\}$; observe that we $a_3=a_j$ gives that also $a_3$ is an even number, hence we reach a contradiction as soon as we have a further even weight. If $\alpha$ is odd, then from \eqref{eq aj d a5} also one among $a_4$ and $a_5$ is indeed even, while if $\alpha$ is even, then by \eqref{eq ai+a2+a3} we have that  $a_h+a_{h'}+a_3$ is even. In any case, we have that this possibility leads to a contradiction.
    \item[$m=1$] in this case $d=na_3+a_4=2a_4+a_j$, hence $n\ge 2$ and $na_3=a_4+a_j=a_4+2\alpha$. From \eqref{eq ai+a2+a3} we have $na_3+a_4=2a_4+2\alpha=a_i+a_2+a_3+\alpha$, then $(n-1)a_3+a_4=a_i+a_2$ and $n<3$, that gives $n=2$. We then have $2a_3=a_4+a_j$, which implies that $a_4$ is even. Furthermore, if $\alpha$ is even then by \eqref{eq aj d a5} also $a_5$ is even, which is a contradiction, while if $\alpha$ is odd then from \eqref{eq ai+a2+a3} we have that $a_i+a_2+a_3$ is odd; as one of the weights in the sum is $a_h$, which is even, we conclude that one among the remaining ones in the sum is also even, which is a contradiction.
    \item[$m=0$] in this case $d=na_3=2a_4+a_j$, hence $n\ge 3$. Using \eqref{eq ai+a2+a3} we get $na_3=d=2a_4+2\alpha=a_i+a_2+a_3+\alpha$, hence $(n-1)a_3=a_i+a_2+\alpha<3a_3$ and $n<4$; we conclude $n=3$. We got $3a_3=d$, hence $a_3$ is an even number. If $h\neq 3$, we reach a contradiction, by arguing as in the case where $m=2$. If $h=3$ we have $d=3a_3=a_3+ka_j$, hence $2a_3=ka_j=2k\alpha$ and $a_3=k\alpha$, hence one among $k$ and $\alpha$ needs to be even, and $d=3k\alpha$. If $k=2k'$ is even, then $d=3k'a_j$, hence $a_j|d$ and we are back to case (1) above. If $k$ is odd then $\alpha=2\alpha'$ is even, and from \eqref{eq 2a4} we have $2a_4=(k-1)a_j+a_3=(3k-2)\alpha$, hence $a_j$, $a_3$, $a_4$ and (using \eqref{eq aj d a5}) $a_5$ are multiple of $\alpha'$, which gives $\alpha'=1$ by \cref{conditions}.(5) and $(a_j,a_3,a_4,a_5)=(4,2k,3k-2,3k)$. Hence by \eqref{eq ai+a2+a3} we have $a_i+a_2=2a_4+\alpha-a_3=4k-2$, and using $a_2\le a_3=2k$, hence $(a_i,a_2)=(2k-2,2k)$ or $(a_i,a_2)=(2k-1,2k-1)$. The first possibility would give that $a_i$ and $a_2$ are even numbers, which leads to a contradiction; for the second option, observe that if $(i,j)=(0,1)$ then it gives $2k-1\le 4$, which contradicts $k\ge 3$. We conclude that 
    \[(a_0,a_1,a_2,a_3,a_4,a_5,d)=(4,2k-1,2k-1,2k,3k-2,3k,6k).\]
    To exclude this last case, we apply the quasi-smoothness criterion on $\{1\}$. If $a_1|d$ then there exists $\beta\ge 1$ such that $6k=\beta(2k-1)$, i.e.\ $(2\beta-6)k=\beta$; this gives that $\beta$ is even, that $\beta\ge 3$ (as $k\ge 3$), and $2\beta-6<\beta$, i.e.\ $\beta<6$; we conclude $\beta=4$ hence $k=2$, which is not impossible. If $a_1|d-a_j$ for some $j\neq 1$, we argue case by case exactly as in the case before, using the explicit expressions of $a_j$ given above; we leave to the reader the check that this leads to a contradiction in all cases. 
\end{description}
\item There exist $|J|=2$ indices $l\notin \{3,4\}$ such that $d-a_l=na_3+ma_4$, $n,m\in\Z_{\ge 0}$. The equation is satisfied when $l = j$ by setting $n = 0$ and $m = 2$. Hence, the condition above is equivalent to $d - a_l = n a_3 + m a_4$ for some $l \in {i, 2, 5}$.

If $l=5$, then $\frac{d}{2}=na_3+ma_4$, hence $d=2na_3+2ma_4$ and we are back to case (a) above. We are left with the case $l\le2$, $l\neq j$, that is, $l\in\{i,2\}$. We define $l'$ as the number satisfying ${l, l'} = {i, 2}$. Note that under this notation the 4th equality in \eqref{eq ai+a2+a3} reads as
\begin{equation}\label{eq al+al'+a3=2a4+alpha}
    a_l+a_{l'}+a_3=2a_4+\alpha.
\end{equation}
Moreover, we recall that $h\in\{i,2,3\}=\{l,l',3\}$ and $a_h$, $a_j$ are even numbers.  From $d=a_l+na_3+ma_4=2a_4+a_j$ we deduce $m\le 2$.
\begin{description}
    \item[$m=2$] in this case $d=a_l+na_3=2a_4=2a_4+a_j$, that gives $n=0$ and $a_j=a_l$ and $a_l$ is also even. We analyze the two cases $h=l$ and $h\neq l$. 
    
    If $h\neq l$, i.e.\ $h\in\{l',3\}$, this gives that $a_l$, $a_h$, $a_j$ are three even weights; if $\alpha$ is odd then also one among $a_4$, $a_5$ is even, which is a contradiction, while if $\alpha$ is even then $a_l+a_{l'}+a_3=2a_4+\alpha$ gives that $a_l$, $a_{l'}$, $a_3$ are all even, which is again a contradiction together with $a_j$ even. 
    
    If $h=l$ we still have that $a_l$, $a_j$ are even weights, and moreover $d=a_l+2a_4=a_l+ka_j$ gives $2a_4=ka_i$, hence $a_4=k\alpha$ and by \eqref{eq aj d a5} that $a_5=(k+1)\alpha$; as $a_l=a_j=2\alpha$, this implies $\alpha=1$ by \cref{conditions}.(5), and then $a_4=k$, $a_5=k+1$, meaning that one among $a_4$ and $a_5$ is even. Furthermore, from \eqref{eq al+al'+a3=2a4+alpha} we deduce that $a_{l'}+a_3=2k-1$, with $a_{l'}\le a_3\le a_4=k$; we conclude that $(a_{l'},a_3)=(k-1,k)$, hence one among them is even, which is a contradiction.
    \item[$m=1$] in this case $d=a_l+na_3+a_4=2a_4+a_j$, which gives $n\ge1$; using \eqref{eq al+al'+a3=2a4+alpha} we have $2a_4+a_j=2a_4+2\alpha=a_l+a_{l'}+a_3+\alpha$, hence $(n-1)a_3+a_4=a_{l'}+\alpha<a_3+a_4$, (as $a_3>\alpha$) and $n<2$. We conclude $n=1$ and $a_4=a_{l'}+\alpha$. Moreover, using \eqref{eq 2a4}, we obtain $2a_4=2a_{l'}+2\alpha=2(k-1)\alpha+a_h$, whence $2a_{l'}=2(k-2)\alpha+a_h$. We analyze the two cases $h=l'$ and $h\neq l'$.

    If $h=l'$ we obtain $a_{l'}=2(k-2)\alpha$, $a_4=(2k-3)\alpha$, $a_5=(2k-2)\alpha$ that together with $a_j=2\alpha$ imply $\alpha=1$ by \cref{conditions}.(5). As $a_{l'}$, $a_5$, $a_j$ are even, we get a contradiction from \eqref{eq al+al'+a3=2a4+alpha}, as it gives that also one element between $a_3$ and $a_l$ is even.

    If $h\neq l'$, i.e.\ $h\in\{l,3\}$, we call $h'$ the index such that $\{h,h'\}=\{l,3\}$; with this notation \eqref{eq al+al'+a3=2a4+alpha} reads $a_{l'}+a_h+a_{h'}=2a_4+\alpha$, and $d=a_h+ka_j=a_l+a_3+a_4=a_4+a_h+a_h'$ gives $a_4=2k\alpha-a_{h'}$. Using these equalities, we can write every weight in terms of $a_{h'}$ and $\alpha$ as follows:
    \begin{align*}
        a_j&=2\alpha \\
        a_{l'}&=(2k-1)\alpha-a_{h'} \\
        a_h&=(2k+2)\alpha-2a_{h'} \\
        a_{h'}&=a_{h'} \\
        a_4&=2k\alpha-a_{h'}\\
        a_5&=(2k+1)\alpha-a_{h'},
    \end{align*}
    where we remind to the reader that $\{i,j\}=\{0,1\}$, $\{l,l'\}=\{i,2\}$,  $\{h,h'\}=\{l,3\}$. From $a_{h'}, a_h\le a_4$ we obtain \[2\alpha\le a_{h'}\le k\alpha.\]
    We are in one of the following cases.
    \begin{itemize}
        \item $(i,j)=(0,1)$, i.e.\ either $a_j\ge a_l$, if $l=i$, or $a_j\ge a_{l'}$, if $l'=i$. Let $l=i$, hence $l'=2$ and $\{h,h'\}=\{0,3\}$; if $h'=i=0$ then $2\alpha\le a_{h'}\le a_j=2\alpha$ and $a_{h'}=2\alpha$, that gives that all weights are multiple of $\alpha$ hence $\alpha=1$, $a_{h'=2}$ and $a_j$, $a_{h'}$, $a_h$, $a_4$ are all even, which is a contradiction; if $h=i=0$ then $a_h\le a_j$ gives $a_{h'}\ge k\alpha$, then $a_{h'}=k\alpha$ and again all weights are multiple of $\alpha$, hence $\alpha=1$, $(a_{l'},a_{h'},a_4,a_5)=\bigl((k-1)\alpha,k\alpha,k\alpha,(k+1)\alpha\bigl)$ and two weights among them are even, which is a contradiction. 
        \item $(i,j)=(1,0)$: in this case $a_0=2\alpha$, hence all other weights are greater or equal than $2\alpha$. If $\alpha$ is odd, then if $a_{h'}$ is even we have that $a_0$, $a_{h'}$, $a_h$, $a_4$ are even, if $a_{h'}$ is odd we have that $a_0$, $a_h$, $a_{l'}$, $a_5$ are even, which in any case give a contradiction. If $\alpha$ and $a_{h'}$ are even, then all weights are even, which is again a contradiction. We are left with the case $\alpha$ even and $a_{h'}$ odd; observe that in this case all weights but $a_0$, $a_h$ are odd, hence strictly bigger than $2\alpha$. To exclude this last case, we apply the quasi-smoothness criterion in \cref{prop quasismooth} on $\{l'\}$, which gives one of the following options.
        \begin{enumerate}
            \item $a_{l'}|d=2a_5$, hence $a_{l'}|a_5$ as $a_{l'}$ is odd; from $a_{l'}|a_5=a_{l'}+2\alpha$ we obtain $a_{l'}|2\alpha$, which is impossible as $a_{l'}>2\alpha$.
            \item $a_{l'}|d-a_s$ for some $s\neq l'$; we analyze case by case for all possible values of $s$, using the description of the weights in terms of $\alpha,a_{h'}$ given above.

            If $s=5$ we obtain $a_{l'}|a_5$, impossible as we are back in case (i). 

            If $s=4$ then $a_{l'}|d-a_4=2a_5-a_4=2(a_4+\alpha)-a_4=a_4+2\alpha=a_{l'}+3\alpha$, hence $a_{l'}|3\alpha$. Using that $a_{l'}$ is odd and $\alpha$ is even, we deduce that $a_{l'}|\frac{3}{2}\alpha$, which gives the contradiction $2\alpha<a_{l'}\le\frac{3}{2}\alpha$.

            If $s=h'$ then $a_{l'}|d-a_{h'}=2a_5-a_{h'}=2(a_{l'}+2\alpha)-a_{h'}=2a_{l'}+4\alpha-a_{h'}$. Observe that  from $a_{h'}>2\alpha$ we have $4\alpha-a_{h'}<2\alpha<a_{l'}$, hence $a_{l'}|2a_{l'}+4\alpha-a_{h'}<3a_{l'}$ gives $2a_{l'}+4\alpha-a_{h'}\in\{a_{l'},2a_{l'}\}$. If $2a_{l'}+4\alpha-a_{h'}=2a_{l'}$ then $a_{h'}=4\alpha$ and all weights are multiple of $\alpha$, which is a contradiction. If $2a_{l'}+4\alpha-a_{h'}=a_{l'}$ then $4\alpha-a_{h'}=-a_{l'}$, i.e.\ $a_{h'}=a_{l'}+4\alpha=(2k+3)\alpha-a_{h'}$ which gives the contradiction $(2k+3)\alpha=2a_{h'}<2k\alpha$.

            If $s=h$ then $a_{l'}|d-a_h=2a_5-a_h=2k\alpha$, hence $a_{l'}|k\frac{\alpha}{2}$ as $a_{l'}$ is odd, $\alpha$ is even. In particular $a_{l'}\le k\frac{\alpha}{2}$, i.e.\ $2(2k-1)\alpha-2a_{h'}\le k\alpha$ and $(3k-2)\alpha\le 2a_{h'}\le 2k\alpha$, giving $k\le 2$ which is a contradiction.

            Finally, if $s=0$ then $a_{l'}|d-a_0=2a_5-2\alpha=2(a_{l'}+\alpha)$ hence $a_{l'}|2\alpha$ which is impossible as $a_{l'}>2\alpha$.
        \end{enumerate}
    \end{itemize}
    \item[$m=0$] in this case $d=a_l+na_3=2a_4+a_j$, which gives $n\ge 2$; using \eqref{eq al+al'+a3=2a4+alpha} we have $2a_4+a_j=2a_4+2\alpha=a_l+a_{l'}+a_3+\alpha$, hence $(n-1)a_3=a_{l'}+\alpha< 2a_3$ (as $a_3>\alpha$), which gives $n\le2$. We conclude $n=2$, i.e.\ $a_3=a_{l'}+\alpha$ and, using again \eqref{eq al+al'+a3=2a4+alpha}, that $a_l+2a_{l'}=2a_4$, hence $a_l$ is even. Observe that we conclude as soon as we have that $\alpha$ is odd: this would imply that one among $a_4$ and $a_5$ is even because of \eqref{eq aj d a5}; if $h\neq l$ this is enough to conclude because also $a_l$ is even if $h=l$ then we conclude because one among $a_3$ and $a_{l'}$ is even by $a_3=a_{l'}+\alpha$. Assume $h\neq l$; we have that $a_l+a_{l'}+a_3=2a_4+\alpha$, where $a_l$ is even and one among $a_3$, $a_{l'}$ is $a_h$, hence also even; if $\alpha$ is even then $a_l$, $a_{l'}$, $a_3$, $a_j$ are all even and we get a contradiction, hence $\alpha$ is odd and we conclude as said. Finally, assume $h=l$; in this case $d=a_h+ka_j=a_l+2a_3$ gives $2a_3=ka_j=2k\alpha$, hence $a_3=k\alpha$ and $a_{l'}=(k-1)\alpha$. From \cref{conditions}.(5) we obtain that $\alpha|d=2a_4+2\alpha$, hence $\alpha|2a_4$ and either $a|a_4$, then $\alpha=1$ again by \cref{conditions}.(5), i.e.\ $\alpha$ odd, or $\alpha|2$. In this last case $\alpha=1$ or 2; the odd case is impossible, hence $\alpha=2$ and $a_j$, $a_{l}$, $a_{l'}$, $a_3$ are all even, which is again a contradiction.
\end{description}
\end{enumerate}
\end{enumerate}
We arrived at a contradiction with all possible options, hence the claim.
\end{proof}
We conclude that $d=ka_4+a_j$ is not verified, hence $a_4$ divides $d$. 
\end{proof}

\begin{proof}[Proof of \cref{thm ai+a5=d}]
By \cref{prop d<=2a5} we have that $d\le 2a_5$.
If \(d = 2a_5\), then \(a_4 + a_5 = d\) by \cref{prop a4=a5=d/2,prop a4 divide d}, hence the claim for $i=4$. If \(d < 2a_5\), then there exists \(i \in \{0, \dots, 4\}\) such that \(a_i + a_5 = d\): the polynomial \(f_d\) defining \(X_d\) must contain a monomial \(x_i x_5\) with \(i < 5\); otherwise, the punctured affine cone \(C_{X_d}^*\) over \(X_d\) would contain the point \((0, \dots, 0, 1) \in \A^6\) and would be singular at that point. We conclude that $a_i+a_5=d$.   
\end{proof}

\subsection{Finiteness of families of FK3 fourfolds} The main consequence of \cref{thm ai+a5=d} is the finiteness result \cref{thm list complete}. We start with some preliminary facts.

\begin{lemma}\label{lemma the K3 is quasi smooth}
    Let $X_d\subseteq\P(a_0,\dots,a_5)$ be a FK3 weighted fourfold such that $a_5>1$ and \(\dim X_d^{\mathrm{sing}}\le 1\), and call $i\in\{1,\dots,5\}$ an index such that $a_i+a_5=d$, given by \cref{thm ai+a5=d}. Then the general weighted surface \(S_d\subseteq\P(a_0,\dots,\hat a_i,\dots,a_4)\) is quasi-smooth and it is not a linear cone.
\end{lemma}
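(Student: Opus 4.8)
The plan is to verify, for the reduced weight system, the numerical criterion of \cref{prop quasismooth}, transferring it from the fourfold $X_d$, which we already know to be quasi-smooth and not a linear cone. Write $J=\{0,\dots,4\}\setminus\{i\}$, so that $S_d\subseteq\P((a_j)_{j\in J})$ is a surface in a three-dimensional weighted projective space (four weights). I would first record two free facts: $\sum_{j\in J}a_j = 2d-a_i-a_5 = d$ by \cref{conditions}.(3) and the relation $a_i+a_5=d$, which is exactly the K3 numerical condition; and $a_j\le a_4\le a_5<d$ for every $j\in J$ by \cref{conditions}.(2), so the requirement $a_j\ne d$ of \cref{prop quasismooth} (which simultaneously rules out a linear cone) holds automatically. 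It then remains to check that for every nonempty $I\subseteq J$ either (1) $d=\sum_{l\in I}n_l a_l$, or (2) there are at least $|I|$ indices $k\in J\setminus I$ with $d-a_k=\sum_{l\in I}n_l a_l$, for suitable $n_l\in\N$.

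The key input is that the same criterion holds for $X_d$ on the full index set $\{0,\dots,5\}$, combined with the relation $a_i+a_5=d$ from \cref{thm ai+a5=d}, giving $d-a_5=a_i$ and $d-a_i=a_5$. I would fix a nonempty $I\subseteq J$ and argue by contradiction, assuming the criterion fails for $S_d$: then (1) fails, and the set $G$ of indices $k\in J\setminus I$ admitting a representation $d-a_k=\sum_{l\in I}n_l a_l$ satisfies $|G|\le|I|-1$. Applying \cref{prop quasismooth} for $X_d$ to $I$ — here (1) is the identical condition, hence also fails — yields a set $K$ of at least $|I|$ indices $k\in\{0,\dots,5\}\setminus I$ with $d-a_k=\sum_{l\in I}n_l a_l$. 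Since $K\cap(J\setminus I)=G$ has at most $|I|-1$ elements while $|K|\ge|I|$, the set $K$ must meet $\{i,5\}$.

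The relation $a_i+a_5=d$ is what lets me recover the lost index. Suppose $5\in K$; then $a_i=d-a_5=\sum_{l\in I}n_l a_l$ is representable by the $I$-weights, so substituting this identity shows that a representation of any quantity by the weights of $I\cup\{i\}$ is the same as a representation by the weights of $I$. In particular (1) fails for $I\cup\{i\}$ as well, and \cref{prop quasismooth} for $X_d$ applied to $I\cup\{i\}$ produces at least $|I|+1$ indices $k\in\{0,\dots,5\}\setminus(I\cup\{i\})=(J\setminus I)\cup\{5\}$ with $d-a_k=\sum_{l\in I}m_l a_l$. Discarding at most the single index $5$, I obtain at least $|I|$ such indices in $J\setminus I$, i.e. $|G|\ge|I|$, contradicting $|G|\le|I|-1$. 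The case $i\in K$ is symmetric: then $a_5=d-a_i$ is representable by the $I$-weights, and applying the criterion for $X_d$ to $I\cup\{5\}$ gives at least $|I|+1$ valid indices in $(J\setminus I)\cup\{i\}$, hence at least $|I|$ in $J\setminus I$, again a contradiction. As $K$ meets $\{i,5\}$, one of these two cases always occurs, which completes the verification of the criterion; by the "if" direction of \cref{prop quasismooth} the general $S_d$ is quasi-smooth and not a linear cone.

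The only genuine obstacle is the potential loss of the two extra indices $i,5$ when restricting the quasi-smoothness criterion from six weights to four: a naive index count only guarantees $|I|-2$ usable indices in $J\setminus I$, one short of what is needed. The device that resolves this is the enlargement trick above — using $a_i+a_5=d$ to convert a representation involving a deleted index into a representation of $a_i$ (resp.\ $a_5$) by the surviving weights, so that enlarging $I$ by $i$ (resp.\ $5$) costs nothing in representability but raises the index count demanded by \cref{prop quasismooth} by exactly one, precisely compensating the loss.
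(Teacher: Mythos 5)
Your proof is correct and follows essentially the same strategy as the paper's: both hinge on using $a_i+a_5=d$ to convert representability of a deleted weight into representability by the $I$-weights, then applying \cref{prop quasismooth} for $X_d$ to the enlarged index set and substituting back to recover the $|I|$ indices needed for the surface. The only differences are cosmetic — you argue by contradiction and enlarge by a single index, while the paper argues directly and may enlarge by both indices at once.
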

\begin{proof}
To simplify our notation, we assume, without loss of generality, that $i=4$ (hence $a_4+a_5=d$). To prove that statement, we apply the criterion in \cref{prop quasismooth}, and we refer to the notation therein in what follows. Any nonempty subset $I\subseteq\{0,\dots,3\}=:A$ satisfies (1) or (2) in \cref{prop quasismooth} as a subset of $\{0,\dots,5\}=:A'$, as $X_d$ is a quasi-smooth and not a linear cone. If $I$ satisfies (1) as a subset of $A'$, then the same equation gives (1) as a subset of $A$, hence $I\subseteq A$ satisfies the criterion. If $I$ satisfies (2) as a subset of $A'$, then there are two options: either no $j$ in the claim equals 4 or 5, hence $I$ satisfies (2) also as a subset of $A$, or at least one of the following equations holds:
\begin{enumerate}[(a)]
    \item $d-a_4=a_5=\sum_{i\in I}n_i a_i$
    \item $d-a_5=a_4=\sum_{i\in I}n_i a_i$
\end{enumerate}
We define $J'\subset A'$ to be the set $\{4,5\}$ if both (a) and (b) hold, and to be the set $\{k\}$ if only one of them holds, with $k=4$ or 5 if (b) or (a) holds respectively. Given $J:=J'\cup I\subset A'$, from the quasi-smoothness of $X_d$, we have again that one of the following options holds:
\begin{enumerate}
    \item $d=\sum_{k\in J'} m_k a_k+\sum_{i\in I}m_ia_i=\sum_{i\in I}m'_ia_i$, where the $m_i'$s are the coefficients obtained combining the equation with (a), (b) or both (a) and (b) above, depending on the definition of $J'$; we get that $I\subseteq A$ satisfies (1);
    \item there exist at least $|J|$ indices $j\notin J$ (hence $j\notin I$) such that $d-a_j=\sum_{k\in J'} m_k a_k+\sum_{i\in I}m_ia_i=\sum_{i\in I}m'_ia_i$, where again the $m_i'$s are the coefficients obtained combining the equation with (a), (b) or both (a) and (b) above, depending on the definition of $J$. Among these indices $j$, there are at least $|I|$ indices different from 4 and 5: if $|J|=|I|+1$ then there is at most one index $j=4,5\notin J$, hence all the remaining $|I|$ indices satisfy the claim, while if $|J|=|I|+2$ then $4,5\in J$, hence $j\neq 4,5$.  We obtain that $I\subseteq A$ satisfies (2).
\end{enumerate}
In any case, we conclude that $S_d$ is quasi-smooth and not a linear cone. 
\end{proof}
\begin{lemma}\label{lemma the K3 is well-formed}
Let $X_d\subseteq\P(a_0,\dots,a_5)$ be a FK3 weighted fourfold such that $a_5>1$ and \(\dim X_d^{\mathrm{sing}}\le 1\), and call $i\in\{1,\dots,5\}$ an index such that $a_i+a_5=d$, given by \cref{thm ai+a5=d}. Then the general weighted surface
\(
S_d\subseteq \mathbf{P}(a_0, \dots, \hat{a}_i, \dots, a_4)
\)
is well-formed.
\end{lemma}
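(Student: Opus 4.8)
I want to show that the general surface $S_d \subseteq \P(a_0,\dots,\hat a_i,\dots,a_4)$ is well-formed, where (as in the previous lemma) I may assume $i=4$, so $S_d \subseteq \P(a_0,a_1,a_2,a_3)$. Recall from Subsection 2.1 that well-formedness of a hypersurface $S_d = V(g_d) \subseteq \P(b_0,\dots,b_m)$ amounts to two conditions: first that the ambient space is well-formed, i.e. $\gcd(b_0,\dots,\hat b_k,\dots,b_m)=1$ for every $k$; and second the hypersurface condition $\gcd(b_0,\dots,\hat b_k,\dots,\hat b_l,\dots,b_m) \mid d$ for every pair $k,l$. In our situation $(b_0,b_1,b_2,b_3)=(a_0,a_1,a_2,a_3)$, and I have at my disposal the full strength of \cref{conditions} for the fourfold $X_d$, in particular the singular-locus hypotheses \cref{conditions}.(5), which control gcd's of the weights of $X_d$ with two or three omitted indices.

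**Strategy.** The plan is to deduce each well-formedness condition for $(a_0,a_1,a_2,a_3)$ directly from the corresponding (stronger) condition for the six weights $(a_0,\dots,a_5)$ of $X_d$. The key algebraic observation to exploit is the relation $a_4+a_5=d$: any common divisor $\delta$ of a subcollection of $\{a_0,a_1,a_2,a_3\}$ interacts with $a_4,a_5$ only through this equation, so divisibility facts about $d$ translate into divisibility facts linking $a_4$ and $a_5$. Concretely, for the ambient well-formedness I must show $\gcd(a_0,\dots,\hat a_k,\dots,a_3)=1$ for each $k\in\{0,1,2,3\}$; this is a gcd of three of the six weights of $X_d$, so I would try to promote it to a statement about the six weights. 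Suppose $\delta=\gcd(a_0,a_1,a_2,a_3)_{\widehat{k}}$ (three of the four small weights). By \cref{conditions}.(5) applied to $X_d$, the gcd of the four weights $a_0,\dots,a_3$ with the two indices $4,5$ omitted divides $d$; so I can conclude $\delta \mid d$, and then using $d=a_4+a_5$ together with the coprimality conditions of \cref{conditions}.(5) (which force $\delta$ to be coprime to at least one of $a_4,a_5$) I expect to be able to force $\delta=1$.

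**Carrying it out.** For the ambient condition, fix $k\in\{0,1,2,3\}$ and set $\delta=\gcd(\{a_0,a_1,a_2,a_3\}\setminus\{a_k\})$, a gcd of three weights of $X_d$ with indices $4,5$ absent. I first invoke \cref{conditions}.(5) (the divisibility clause with three indices $i,j,k$ omitted, taking those to be $k,4,5$) to get $\delta \mid d$. Combined with $d=a_4+a_5$, this says $\delta \mid a_4+a_5$. On the other hand, the coprimality clause of \cref{conditions}.(5) (with two indices omitted) applied to the six weights of $X_d$ forces the gcd of these three weights together with just one of $a_4,a_5$ to be $1$; interpreting this as $\gcd(\delta,a_4)=1$ or $\gcd(\delta,a_5)=1$, and feeding it back into $\delta \mid (a_4+a_5)$, yields $\delta\mid a_5$ (resp. $\delta\mid a_4$), which then forces $\delta\mid a_4$ and $\delta\mid a_5$ both, contradicting coprimality unless $\delta=1$. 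For the hypersurface condition I must show $\gcd(a_0,\dots,\hat a_k,\dots,\hat a_l,\dots,a_3)\mid d$ for every pair $k,l\in\{0,1,2,3\}$; but this gcd of two of the small weights is exactly one of the gcd's that \cref{conditions}.(5) already tells me divides $d$ (omit $k,l,4,5$ — here one would use the divisibility clause, noting $d$ is unchanged whether we view the weights inside $X_d$ or inside $S_d$), so this half is essentially immediate.

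**Main obstacle.** The bookkeeping is routine in spirit, so the real point — and the step I expect to require the most care — is making sure that the coprimality and divisibility clauses of \cref{conditions}.(5), which are phrased for the full six-weight system of $X_d$, are applied with exactly the right choice of omitted indices, and that the degenerate low-dimensional cases (e.g. when fewer than four small weights remain, or when two of the $a_0,\dots,a_3$ coincide) do not escape the argument. In particular I must be careful that the relation $a_4+a_5=d$ is genuinely used to convert "$\delta\mid d$" into a statement pinning $\delta$ against $a_4$ and $a_5$ simultaneously; without it, $\delta\mid d$ alone would not close the argument. Once the index bookkeeping is pinned down, each case reduces to the elementary divisibility manipulation sketched above.
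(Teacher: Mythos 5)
Your proposal has a genuine gap in each of its two halves. For the ambient condition, the deduction ``from $\gcd(\delta,a_4)=1$ and $\delta\mid a_4+a_5$ conclude $\delta\mid a_5$'' is a non sequitur: take $\delta=6$, $a_4=5$, $a_5=7$; both hypotheses hold, yet $6\nmid 7$ and $6\nmid 5$. All those two facts give is $\gcd(\delta,a_5)=\gcd(\delta,a_4)=1$, which produces no contradiction and leaves $\delta$ unconstrained. The identity that actually closes the argument is not $a_i+a_5=d$ by itself, but its combination with the FK3 condition $\sum_l a_l=2d$ (\cref{conditions}.(3)), which gives $\sum_{l\neq i,5}a_l=d$: the four weights of the ambient space of $S_d$ themselves sum to $d$. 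This is how the paper argues: from $\delta\mid d$ (the divisibility clause of \cref{conditions}.(5), omitting $k$, $i$, $5$) and the fact that $\delta$ divides three of the four small weights, one concludes that $\delta$ divides the remaining one, hence $\delta\mid\gcd(a_0,\dots,\hat a_i,\dots,a_4)$, and this gcd of four of the six weights equals $1$ by the coprimality clause of \cref{conditions}.(5).

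The second half is also wrong as stated. You claim $\gcd(a_s,a_t)\mid d$ is ``essentially immediate'' from \cref{conditions}.(5) by omitting four indices, but the divisibility clause of \cref{conditions}.(5) omits exactly three indices: it governs gcds of triples of the six weights, never of pairs. Nothing in \cref{conditions} controls pairwise gcds; for instance $X_{15}\subseteq\P(1,3,4,6,7,9)$ (entry 86 of \cref{table fourfolds}) satisfies all of \cref{conditions}, yet $\gcd(4,6)=2\nmid 15$. This is precisely why the paper's proof of this half is not bookkeeping: it invokes the quasi-smoothness of $S_d$ established in \cref{lemma the K3 is quasi smooth}, applies the criterion of \cref{prop quasismooth} to $I=\{s,t\}$, and, in case (2), combines $\gcd(a_s,a_t)\mid(d-a_r)$ and $\gcd(a_s,a_t)\mid(d-a_l)$ with the same sum identity $a_r+a_l+a_s+a_t=d$ to conclude $\gcd(a_s,a_t)\mid d$. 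Your proposal never uses the quasi-smoothness lemma at all, and without it (or some substitute for it) this condition cannot be established.
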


\begin{proof}
We need to prove that the following conditions are satisfied:
\begin{itemize}
\item[1.] \(\gcd(a_0, \dots, \hat{a}_i, \dots, \hat{a}_j, \dots, a_4) = 1 \quad \forall\ i,j\),
\item[2.] \(\gcd(a_0, \dots, \hat{a}_i, \dots, \hat{a}_j, \dots, \hat{a}_k, \dots, a_4) \mid d \quad \forall\ i,j,k\).
\end{itemize}

Since we are assuming that \(\codim_{X_d} X_d^{\mathrm{sing}} \ge 3\), the following holds:
\[
\gcd(a_0, \dots, \hat{a}_i, \dots, \hat{a}_j, \dots, a_5) = 1 \quad \forall\ i,j \quad \text{and} \quad
\gcd(a_0, \dots, \hat{a}_i, \dots, \hat{a}_j, \dots, \hat{a}_k, \dots, a_5) \mid d \quad \forall\ i,j,k.
\]
Thus, we know that \(\gcd(a_0, \dots, \hat{a}_i, \dots, \hat{a}_j, \dots, \hat{a}_5) \mid d\). Moreover, by assumption, \(d = a_5 + a_i\), and hence \(\sum_{l \neq i, 5} a_l = d\).

It follows that
\[
\gcd(a_0, \dots, \hat{a}_i, \dots, \hat{a}_j, \dots, \hat{a}_5) \mid \sum_{l \neq i, 5} a_l,
\]
which implies
\begin{equation}\label{eq_a}
\sum_{l \neq i, 5} a_l = \gcd(a_0, \dots, \hat{a}_i, \dots, \hat{a}_j, \dots, \hat{a}_5) \cdot k
\end{equation}
for some integer \(k \in \mathbf{Z}_{\geq 1}\).

By construction, for all \(h \neq i, j, 5\), we have \(a_h = \gcd(a_0, \dots, \hat{a}_i, \dots, \hat{a}_j, \dots, \hat{a}_5) \cdot \alpha_h\). Then by \eqref{eq_a}, we also get
\[
a_j = \gcd(a_0, \dots, \hat{a}_i, \dots, \hat{a}_j, \dots, \hat{a}_5) \cdot \left(k - \sum_{h \neq i, j, 5} \alpha_h\right).
\]
This means that
\[
\gcd(a_0, \dots, \hat{a}_i, \dots, \hat{a}_j, \dots, \hat{a}_5) \mid a_j,
\]
and so
\[
\gcd(a_0, \dots, \hat{a}_i, \dots, \hat{a}_j, \dots, a_4) = \gcd(a_0, \dots, \hat{a}_i, \dots, \hat{a}_j, \dots, \hat{a}_5) = \gcd(a_0, \dots, \hat{a}_i, \dots, \hat{a}_5) = 1,
\]
where the last equality follows from our assumptions, and this completes the proof of item 1.

To prove item 2, we use the quasi-smoothness of the surface, which follows from \cref{lemma the K3 is quasi smooth}.

To simplify the notation, let \(\{s, t\}\) be the set of indices \(\{0, \dots, \hat{i}, \dots, \hat{j}, \dots, \hat{k}, \dots, 4\}\). We want to prove that \(\gcd(a_s, a_t) \mid d\). From \cref{lemma the K3 is quasi smooth}, we know that either condition (1) or (2) in \cref{prop quasismooth} is satisfied.

If (1) holds, then for \(I = \{s,t\}\), we have \(d = n_s a_s + n_t a_t\) for some \(n_s, n_t \in \mathbf{N}\), hence \(\gcd(a_s, a_t) \mid d\).

If (2) holds, then there exist two indices \(r, l\) different from \(s\) and \(t\), such that \(\{s, t, r, l\} = \{0, \dots, \hat{i}, \dots, 4\}\), and
\[
d - a_r = n_s a_s + n_t a_t, \quad d - a_l = m_s a_s + m_t a_t
\]
for some \(n_s, n_t, m_s, m_t \in \mathbf{N}\). Then \(\gcd(a_s, a_t) \mid (d - a_r)\) and \(\gcd(a_s, a_t) \mid (d - a_l)\), so
\[
\gcd(a_s, a_t) \mid (2d - a_r - a_l - a_s - a_t).
\]
Note that by construction \(a_r + a_l + a_s + a_t = a_0 + \dots + \hat{a}_i + \dots + a_4 = d\), hence \(\gcd(a_s, a_t) \mid d\), as required. This concludes the proof.
\end{proof}

\begin{corollary}\label{cor the k3 is a k3}
Let $X_d\subseteq\P(a_0,\dots,a_5)$ be a FK3 weighted fourfold such that $a_5>1$ and \(\dim X_d^{\mathrm{sing}}\le 1\), and call $i\in\{1,\dots,5\}$ an index such that $a_i+a_5=d$, given by \cref{thm ai+a5=d}. Then the general weighted surface
\(
S_d \subseteq \mathbf{P}(a_0, \dots, \hat{a}_i, \dots, a_4)
\)
is a quasi-smooth K3 surface that is not a linear cone.
\end{corollary}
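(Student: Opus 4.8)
The plan is to verify directly the three defining properties of a (possibly singular) K3 surface listed in \cref{def:singularK3} — normality with at worst du Val singularities, triviality of the canonical class, and the vanishing $h^1(\mathcal{O}_{S_d})=0$ — since the two preceding lemmas already carry out the hard combinatorial work. Indeed, \cref{lemma the K3 is quasi smooth} and \cref{lemma the K3 is well-formed} guarantee that $S_d$ is a quasi-smooth, well-formed weighted hypersurface that is not a linear cone, so the corollary amounts to assembling these inputs together with a single numerical identity on the weights.

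First I would record that identity. Writing $(\underline b)$ for the four weights $a_0,\dots,\hat a_i,\dots,a_4$ defining the ambient $\P(\underline b)$, the FK3 condition $\sum_{l=0}^{5} a_l = 2d$ of \cref{conditions}.(3), together with the relation $a_i + a_5 = d$ provided by \cref{thm ai+a5=d}, gives
\[
\sum_{l\neq i,5} a_l \;=\; 2d - (a_i + a_5) \;=\; 2d - d \;=\; d .
\]
Since $S_d$ is quasi-smooth and well-formed, the adjunction formula \eqref{eq adjuction} applies and yields $\mathcal{O}(K_{S_d}) \simeq \mathcal{O}_{S_d}\bigl(d - \sum_{l\neq i,5} a_l\bigr) = \mathcal{O}_{S_d}$, so $K_{S_d}\sim 0$. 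Observe that the identity above is precisely the dimension-two instance ($t=1$) of the numerical K3 condition $\sum b_l = d\cdot t$ appearing in the corollary to \cref{thm Hodge structure of Xd}.

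For the singularity type, a quasi-smooth weighted hypersurface of dimension two has at worst du Val singularities by \cite[Corollary 2.9]{ETW23}; these are canonical, and in particular $S_d$ is normal. It then remains to verify $h^1(S_d,\mathcal{O}_{S_d})=0$. I would deduce this from the structure sequence $0\to \mathcal{O}_{\P(\underline b)}(-d)\to \mathcal{O}_{\P(\underline b)}\to \mathcal{O}_{S_d}\to 0$: since $\P(\underline b)$ is a three-dimensional weighted projective space, the intermediate cohomology of its line bundles vanishes, so $H^1(\P(\underline b),\mathcal{O})=0$ and $H^2(\P(\underline b),\mathcal{O}(-d))=0$, and the associated long exact sequence forces $H^1(S_d,\mathcal{O}_{S_d})=0$. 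Combining normality with du Val singularities, $K_{S_d}\sim 0$, and this vanishing, $S_d$ satisfies \cref{def:singularK3} and is therefore a K3 surface; that it is quasi-smooth and not a linear cone is exactly the content of \cref{lemma the K3 is quasi smooth}.

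I do not expect a genuine obstacle: the weight arithmetic and the adjunction computation are immediate, and the du Val statement is a citation. The only point requiring mild care is the cohomology vanishing $h^1(\mathcal{O}_{S_d})=0$, where one must invoke the standard vanishing of intermediate cohomology for line bundles on weighted projective space rather than the classical Lefschetz theorem. Alternatively, one may bypass this step entirely by reading off the K3-type Hodge structure directly from the corollary to \cref{thm Hodge structure of Xd}, which gives the same conclusion once the identity $\sum_{l\neq i,5}a_l = d$ has been established.
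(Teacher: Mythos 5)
Your proposal is correct and follows essentially the same route as the paper's proof: combine \cref{lemma the K3 is quasi smooth} and \cref{lemma the K3 is well-formed}, use the identity $\sum_{l\neq i,5}a_l = 2d-(a_i+a_5)=d$ coming from \cref{conditions}.(3) and \cref{thm ai+a5=d}, and conclude $K_{S_d}\sim 0$ by the adjunction formula \eqref{eq adjuction}. The only difference is that you explicitly verify the remaining conditions of \cref{def:singularK3} (du Val singularities via \cite[Corollary 2.9]{ETW23} and $h^1(\mathcal{O}_{S_d})=0$ via the structure sequence on $\P(\underline b)$), which the paper leaves implicit, having recorded the du Val fact earlier in \cref{subsec weighted hypersurf and FK3}.
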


\begin{proof}
The variety \( S_d \) is a well-formed, quasi-smooth weighted surface that is not a linear cone, by \cref{lemma the K3 is well-formed,lemma the K3 is quasi smooth}. As a consequence, we can apply the adjunction formula \eqref{eq adjuction} which gives
\[
\OO(K_{S_d}) \cong \mathcal{O}_{S_d} \bigl( \deg(S_d) - (a_0 + \dots + \hat{a}_i + \dots + a_4) \bigr) = \mathcal{O}_{S_d},
\]
since by hypothesis \( d - (a_0 + \dots + \hat{a}_i + \dots + a_4) = d - (2d - a_5 - a_i) = 0 \), because of \cref{conditions}.(3), hence it is a K3 surface.
\end{proof}

\begin{theorem}\label{thm list complete} 
There exist finitely many families of FK3 weighted fourfolds $X_d\subseteq\P(a_0, \ldots, a_5)$ with singular locus of dimension at most 1.
\end{theorem}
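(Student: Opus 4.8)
The plan is to deduce finiteness from the association $X_d \mapsto S_d$ already built in \cref{cor the k3 is a k3}, by showing that this association has \emph{finite image} and \emph{finite fibers}. The image lands, by construction, in Reid's famous 95 list of quasi-smooth, well-formed weighted K3 surface hypersurfaces that are not linear cones, which is a finite set; the only remaining content is to bound, for each such K3 surface, how many fourfolds can give rise to it. Since a family is determined by the pair $(d; a_0, \ldots, a_5)$, it suffices to show that only finitely many admissible weight systems and degrees occur.

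First I would dispose of the degenerate case $a_5 = 1$. Because the weights are ordered $a_0 \le \cdots \le a_5$, having $a_5 = 1$ forces $a_0 = \cdots = a_5 = 1$, so $\P(\underline a) = \P^5$. The FK3 condition \cref{conditions}.(3) then reads $6 = \sum a_i = 2d$, giving $d = 3$: this is exactly the smooth cubic fourfold, a single family. Hence it is enough to bound the families with $a_5 > 1$.

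For $a_5 > 1$, \cref{thm ai+a5=d} furnishes an index $i \in \{0, \ldots, 4\}$ with $a_i + a_5 = d$, and \cref{cor the k3 is a k3} attaches to $X_d$ the general weighted surface $S_d \subseteq \P(a_0, \ldots, \hat a_i, \ldots, a_4)$, which is one of the 95 K3 surfaces. As that list is finite, the degree $d = \deg S_d$ and the four surface weights $\{a_0, \ldots, a_4\} \setminus \{a_i\}$ range over finitely many possibilities. Reconstructing $X_d$ from $S_d$ amounts to reinserting the two missing weights $a_i$ and $a_5$ into the weight system, subject to the single constraint $a_i + a_5 = d$ with $d$ fixed. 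There are at most $\lfloor d/2 \rfloor$ ways to write $d$ as such an (ordered) sum of two positive integers, so only finitely many weight systems $(a_0, \ldots, a_5)$ map to a given $S_d$. Combining the finite image with these finite fibers yields finiteness for $a_5 > 1$, and together with the cubic fourfold this proves the theorem.

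The genuine difficulty here has already been overcome by the preceding results: the real obstacle is \cref{thm ai+a5=d} (forcing the relation $a_i + a_5 = d$) together with \cref{lemma the K3 is quasi smooth,lemma the K3 is well-formed}, which guarantee that $S_d$ is quasi-smooth, well-formed and not a linear cone, so that it genuinely belongs to Reid's list. Without those properties one could not invoke the finiteness of the 95, and the reduction would collapse. Granting them, the finiteness statement itself is a short bookkeeping corollary: the only residual computation is the elementary bound on the number of splittings $d = a_i + a_5$, which presents no real obstruction.
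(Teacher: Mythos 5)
Your proposal is correct and follows essentially the same route as the paper's own proof: the case $a_5=1$ reduces to the smooth cubic, and for $a_5>1$ one invokes \cref{thm ai+a5=d} and \cref{cor the k3 is a k3} to land in Reid's finite list of weighted K3 hypersurfaces, with the finitely many partitions $d=a_i+a_5$ accounting for the finiteness of the fibers of the association $X_d\mapsto S_d$. Your explicit image/fiber formulation and the bound $\lfloor d/2\rfloor$ on the number of splittings only make more precise the paper's closing remark that "a FK3 \ldots is constructed from a partition $a_i+a_5=d$, and there are finitely many of these."
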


\begin{proof}
Let \(X_d\) be as in the statement. If \(a_0 = \dots = a_5 = 1\), then \(d = 3\) and \(X_d\) is a smooth cubic hypersurface in \(\P^5\). If \(a_5 > 1\), then we are in the hypotheses of \cref{cor the k3 is a k3} and to $X_d$ we can associate a weighted hypersurface $S_d$ that is a K3 surface. The claim is therefore a consequence of the finiteness of quasi-smooth codimension 1 weighted K3 surfaces proved in \cite[Section 4]{reid}: From any such K3 surface  \(S_d\), a FK3 as in \cref{cor the k3 is a k3} is constructed from a partition $a_i+a_5=d$, and there are finitely many of these. 
\end{proof}

\begin{remark} \label{rmk:unbounded}
The dimension hypothesis on the singular locus is necessary to have the finiteness result: if omitted, our setting would include the infinitely many examples $X_{2^d}\subseteq \P(2^{d-2},2^{d-2},2^{d-2},2^{d-2},2^{d-1}-e,2^{d+1}+e)$, with $d>>0$ and $e>0$, provided in \cite[Remark A.12]{LPZ18}.

Note that this hypothesis is weaker than having terminal singularities (\cite[Corollary 5.18]{KM98}), which would guarantee the finiteness of families \cite[Theorem 1.1]{Birkar21}; in fact, not all FK3 weighted fourfolds $X_d\subseteq\P(a_0, \ldots, a_5)$ with $\dim X_d^{\mathrm{sing}}\le 1$ have terminal singularities (see \cref{table sigularities}).
\end{remark}

\section{The examples} 
\label{section: the examples}

\subsection{The list of examples}\label{section list}
In \cref{table fourfolds} we list all families of FK3 weighted fourfolds \(X_d\subseteq \P(a_0,...,a_5)\) with \(\dim X_d^{\mathrm{sing}}\le 1\). The list is finite because of \cref{thm list complete}, and is produced by hand, as suggested by the proof of the theorem:
\begin{enumerate}
    \item consider the finite list of all possible quasi-smooth codimension 1 weighted K3 surfaces  given by \cite[Section 4]{reid} (see also \cite[Table 1]{Fletcher00} for the full list);
    \item for each such K3 surface $S_d\subseteq \P(a_0,a_1,a_2,a_3)$, take any partition $s+t=d$;
    \item control if the general $X_d\subseteq\P(a_0,a_1,a_2,a_3,s,t)$ is a FK3 weighted fourfold with \(\dim X_d^{\mathrm{sing}}\le 1\), and in case of a positive answer, add it to the list.
\end{enumerate}
The Hodge number $h^{2,2}$ of each example is computed using \cref{thm Hodge structure of Xd}.
\tiny
\renewcommand{\arraystretch}{1.3}
\rowcolors{2}{gray!10}{white}
\begin{longtable}{c||c|c|c} 
\caption{Families of FK3 weighted fourfolds $X_d\subseteq \P(\underline a)$ with \(\dim X_d^{\mathrm{sing}}\le 1\). They are listed in lexicographic order as the degree increases.
} 
\label{table fourfolds} \\
\rowcolor{white}
 \(\#\)
 & $(\underline a)$
 & $d$
 & $h^{2,2}$
 \\ \hline \hline \endfirsthead
 \rowcolor{white}
  \(\#\)
 & $(\underline a)$
 & $d$
 & $h^{2,2}$
 \\ \hline \hline \endhead 
 1
 & $(1^6)$
 & 3
 & 21
 \\ \hline
 2
 & $(1^5,3) $
 &  4
 & 20
 \\ \hline
 3
 & $(1^4,2^2)$
 & 4
 & 20
 \\ \hline
 4
 & $(1^4,2,4)$
 & 5
 & 19
 \\ \hline
 5
 & $(1^3,2^2,3)$
 & 5
 & 19
 \\ \hline
 6
 & $(1^4,3,5) $
 & 6
 & 20
 \\ \hline
 7
 & $(1^3,2^2,5)$ 
 & 6
 & 17
 \\ \hline
 8
 & $(1^3,2,3,4)$ 
 & 6
 & 20
 \\ \hline
 9
 & $(1^3,3^3)$
 & 6
 & 20
 \\ \hline
 10
 & $(1^2,2^2,3^2)$
 & 6
 & 17
 \\ \hline
 11
 & $(1^3,2,3,6)$
 & 7
 & 17
 \\ \hline
 12
 & $(1^2,2^2,3,5)$
 & 7
 & 17
 \\ \hline
 13
 & $(1^2,2,3^2,4)$
 & 7
 & 17
 \\ \hline
 14
 & $(1^3,2,4,7)$
 & 8
 & 18
 \\ \hline
 15
 & $(1^2,2^2,3,7)$
 & 8
 & 14
 \\ \hline
 16
 & $(1^2,2,3,4,5)$
 & 8
 & 18
 \\ \hline
 17
 & $(1,2^2,3^2,5)$  
 & 8
 & 14
 \\ \hline
 18
 & $(1^3,3,4,8)$
 & 9
 & 17
 \\ \hline
 19
 & $(1^2,2,3^2,8)$
 & 9
 & 13
 \\ \hline
 20
 & $(1^2,2,3,4,7)$
 & 9
 & 17
 \\ \hline
 21
 & $(1^2,3^2,4,6)$
 & 9
 & 17
 \\ \hline
 22
 & $(1^2,3,4^2,5)$
 & 9
 & 17
 \\ \hline
 23
 & $(1,2,3^2,4,5)$
 & 9
 & 13
 \\ \hline
 24
 & $(1,2^2,3^2,7)$
 & 9
 & 13
 \\ \hline
 25
 & $(1^3,3,5,9)$
 & 10
 & 18
 \\ \hline
 26
 & $(1^2,2^2,5,9)$
 & 10
 & 15
 \\ \hline
 27
 & $(1^2,2,3,4,9)$
 & 10
 & 13
 \\ \hline
 28
 & $(1^2,2,3,5,8)$
 & 10
 & 18
 \\ \hline
 29
 & $(1^2,3^2,5,7)$
 & 10
 & 18
 \\ \hline
 30
 & $(1^2,3,5^3)$
 & 10
 & 18
 \\ \hline
 31
 & $(1,2^2,3,5,7)$
 & 10
 & 15
 \\ \hline
 32
 & $(1,2^2,5^3)$
 & 10
 & 15
 \\ \hline
 33
 & $(1,2,3^2,4,7)$
 & 10
 & 13
 \\ \hline
 34
 & $(1,2,3,4,5^2)$
 & 10
 & 13
 \\ \hline
 35
 & $(1^2,2,3,5,10)$
 & 11
 & 13
 \\ \hline
 36
 & $(1,2^2,3,5,9)$
 & 11
 & 13
 \\ \hline
 37
 & $(1,2,3^2,5,8)$
 & 11
 & 13
 \\ \hline
 38
 & $(1,2,3,4,5,7)$
 & 11
 & 13
 \\ \hline
 39
 & $(1,2,3,5^2,6)$
 & 11
 & 13
 \\ \hline
 40
 & $(1^3,4,6,11)$
 & 12
 & 19
 \\ \hline
 41
 & $(1^2,2,3,6,11)$
 & 12
 & 14
 \\ \hline
 42
 & $(1^2,2,4,5,11)$
 & 12
 & 13
 \\ \hline
 43
 & $(1^2,3,4^2,11)$
 & 12
 & 11
 \\ \hline
 44
 & $(1^2,3,4,6,9)$
 & 12
 & 19
 \\ \hline
 45
 & $(1,2^2,3,5,11)$
 & 12
 & 10
 \\ \hline
 46
 & $(1,2,3^2,4,11)$
 & 12
 & 9
 \\ \hline
 47
 & $(1,2,3,4,5,9)$
 & 12
 & 13
 \\ \hline
 48
 & $(1,2,3,5,6,7)$
 & 12
 & 14
 \\ \hline
 49
 & $(1,2,4,5^2,7)$
 & 12
 & 13
 \\ \hline
 50
 & $(1,3^2,4^2,9)$
 & 12
 & 11
 \\ \hline
 51
 & $(2^2,3^2,5,9)$
 & 12
 & 10
 \\ \hline
 52
 & $(2^2,3,5^2,7)$
 & 12
 & 10
 \\ \hline
 53
 & $(2,3^2,4,5,7)$
 & 12
 & 9
 \\ \hline
 54
 & $(1^2,3,4,5,12)$
 & 13
 & 11
 \\ \hline
 55
 & $(1,2,3,4,5,11)$
 & 13
 & 11
 \\ \hline
 56
 & $(1,3^2,4,5,10)$
 & 13
 & 11
 \\ \hline
 57
 & $(1,3,4^2,5,9)$
 & 13
 & 11
 \\ \hline
 58
 & $(1,3,4,5^2,8)$
 & 13
 & 11
 \\ \hline
 59
 & $(1,3,4,5,6,7)$
 & 13
 & 11
 \\ \hline
 60
 & $(1^2,2,4,7,13)$
 & 14
 & 14
 \\ \hline
 61
 & $(1,2^2,3,7,13)$
 & 14
 & 11
 \\ \hline
 62
 & $(1,2,3,4,5,13)$
 & 14
 & 8
 \\ \hline
 63
 & $(1,2,3,4,7,11)$
 & 14
 & 14
 \\ \hline
 64
 & $(1,2,4,5,7,9)$
 & 14
 & 14
 \\ \hline
 65
 & $(1,2,4,7^3)$
 & 14
 & 14
 \\ \hline
 66
 & $(2^2,3^2,7,11)$
 & 14
 & 11
 \\ \hline
 67
 & $(2^2,3,7^3)$
 & 14
 & 11
 \\ \hline
 68
 & $(2,3^2,4,5,11)$
 & 14
 & 8
 \\ \hline
 69
 & $(2,3,4,5^2,9)$
 & 14
 & 8
 \\ \hline
 70 
 & $(2,3,4,5,7^2)$
 & 14
 & 6
 \\ \hline 
 71
 & $(1^2,2,5,7,14)$
 & 15
 & 13
 \\ \hline
 72
 & $(1^2,3,4,7,14)$
 & 15
 & 11
 \\ \hline
 73
 & $(1^2,3,5,6,14)$
 & 15
 & 11
 \\ \hline
 74
 & $(1,2^2,5,7,13)$
 & 15
 & 13
 \\ \hline
 75
 & $(1,2,3,4,7,13)$
 & 15
 & 11
 \\ \hline
 76
 & $(1,2,3,5^2,14)$
 & 15
 & 7
 \\ \hline
 77 
 & $(1,2,3,5,6,13)$
 & 15
 & 11
 \\ \hline
 78
 & $(1,2,3,5,7,12)$
 & 15
 & 13
 \\ \hline
 79
 & $(1,2,5^2,7,10)$
 & 15
 & 13
 \\ \hline
 80
 & $(1,2,5,7^2,8)$
 & 15
 & 13
 \\ \hline
 81
 & $(1,3^2,4,5,14)$
 & 15
 & 7
 \\ \hline
 82
 & $(1,3^2,4,7,12)$
 & 15
 & 11
 \\ \hline
 83
 & $(1,3,4^2,7,11)$
 & 15
 & 11
 \\ \hline
 84
 & $(1,3,4,5,6,11)$
 & 15
 & 11
 \\ \hline
 85
 & $(1,3,4,5,7,10)$
 & 15
 & 11
 \\ \hline
 86
 & $(1,3,4,6,7,9)$
 & 15
 & 10
 \\ \hline
 87
 & $(1,3,4,7^2,8)$
 & 15
 & 11
 \\ \hline
 88
 & $(1,3,5^2,6,10)$
 & 15
 & 11
 \\ \hline
 89
 & $(1,3,5,6,7,8)$
 & 15
 & 11
 \\ \hline
 90
 & $(3^2,4^2,5,11)$
 & 15
 & 7
 \\
 \hline
 91
 & $(3^2,4,5^2,10)$
 & 15
 & 7
 \\ \hline
 92
 & $(1^2,2,5,8,15)$
 & 16
 & 14
 \\ \hline
 93
 & $(1^2,3,4,8,15)$
 & 16
 & 12
 \\ \hline
 94
 & $(1^2,4,5,6,15)$
 & 16
 & 10
 \\ \hline
 95
 & $(1,2,3,4,7,15)$
 & 16
 & 8
 \\ \hline
 96
 & $(1,2,3,5,8,13)$
 & 16
 & 14
 \\ \hline
 97
 & $(1,2,5^2,8,11)$
 & 16
 & 14
 \\ \hline
 98
 & $(1,2,5,7,8,9)$
 & 16
 & 14
 \\ \hline
 99
 & $(1,3^2,4,8,13)$
 & 16
 & 12
 \\ \hline
 100
 & $(1,3,4,5,6,13)$
 & 16
 & 10
 \\ \hline
 101
 & $(1,3,4,5,8,11)$
 & 16
 & 12
 \\ \hline
 102
 & $(1,4,5^2,6,11)$
 & 16
 & 10
 \\ \hline
 103
 & $(1,2,3,5,7,16)$
 & 17
 & 7
 \\ \hline
 104
 & $(1^2,2,6,9,17)$
 & 18
 & 15
 \\ \hline
 105
 & $(1^2,3,5,9,17)$
 & 18
 & 12
 \\ \hline
 106
 & $(1,2,3,4,9,17)$
 & 18
 & 9
 \\ \hline
 107
 & $(1,2,3,5,8,17)$
 & 18
 & 7
 \\ \hline
 108
 & $(1,2,3,5,9,16)$
 & 18
 & 12
 \\ \hline
 109
 & $(1,3,5^2,9,13)$
 & 18
 & 12
 \\ \hline
 110
 & $(2,3^2,5,8,15)$
 & 18
 & 7
 \\ \hline
 111
 & $(2,3,4,5,9,12)$
 & 18
 & 5
 \\ \hline
 112
 & $(2,3,4,7,9,11)$
 & 18
 & 9
 \\ \hline
 113
 & $(2,3,5^2,8,13)$
 & 18
 & 7
 \\ \hline
 114
 & $(2,3,5,8,9^2)$
 & 18
 & 7
 \\ \hline
 115
 & $(1,3,4,5,7,18)$
 & 19
 & 5
 \\ \hline
 116
 & $(1^2,4,5,10,19)$
 & 20
 & 11
 \\ \hline
 117
 & $(1,2,3,5,10,19)$
 & 20
 & 8
 \\ \hline
 118
 & $(1,2,4,5,9,19)$
 & 20
 & 7
 \\ \hline
 119
 & $(1,2,5,6,7,19)$
 & 20
 & 6
 \\ \hline
 120
 & $(2,3^2,5,10,17)$
 & 20
 & 8
 \\ \hline
 121
 & $(2,3,4,5,9,17)$
 & 20
 & 7
 \\ \hline
 122
 & $(2,3,5,6,7,17)$
 & 20
 & 6
 \\ \hline
 123
 & $(2,3,5,9,10,11)$
 & 20
 & 8
 \\ \hline
 124
 & $(2,4,5^2,9,15)$
 & 20
 & 7
 \\ \hline
 125
 & $(2,4,5,9^2,11)$
 & 20
 & 7
 \\ \hline
 126
 & $(2,5^2,6,7,15)$
 & 20
 & 6
 \\ \hline
 127
 & $(2,5,6,7^2,13)$
 & 20
 & 6
 \\ \hline
 128
 & $(2,5,6,7,9,11)$
 & 20
 & 6
 \\ \hline
 129
 & $(1^2,3,7,10,20)$
 & 21
 & 11
 \\ \hline
 130
 & $(1^2,5,7,8,20)$
 & 21
 & 9
 \\ \hline
 131
 & $(1,2,3,7,9,10)$
 & 21
 & 17
 \\ \hline
 132
 & $(1,2,3,7,10,19)$
 & 21
 & 11
 \\ \hline
 133
 & $(1,2,5,7,8,19)$
 & 21
 & 9
 \\ \hline
 134
 & $(1,3^2,7,10,18)$
 & 21
 & 11
 \\ \hline
 135
 & $(1,3,4,7,10,17)$
 & 21
 & 11
 \\ \hline
 136
 & $(1,3,5,6,7,20)$
 & 21
 & 5
 \\ \hline
 137
 & $(1,3,5,7,8,18)$
 & 21
 & 9
 \\ \hline
 138
 & $(1,3,5,7,10,16)$
 & 21
 & 11
 \\ \hline
 139
 & $(1,3,6,7,10,15)$
 & 21
 & 11
 \\ \hline
 140
 & $(1,3,7^2,10,14)$
 & 21
 & 11
 \\ \hline
 141
 & $(1,3,7,9,10,12)$
 & 21
 & 11
 \\ \hline
 142
 & $(1,3,7,10^2,11)$
 & 21
 & 11
 \\ \hline
 143
 & $(1,4,5,7,8,17)$
 & 21
 & 9
 \\ \hline
 144
 & $(1,5^2,7,8,16)$
 & 21
 & 9
 \\ \hline
 145
 & $(1,5,7^2,8,14)$
 & 21
 & 9
 \\ \hline
 146
 & $(1,5,7,8^2,13)$
 & 21
 & 9
 \\ \hline
 147
 & $(1,5,7,8,10,11)$
 & 21
 & 9
 \\ \hline
 148
 & $(1^2,3,7,11,21)$
 & 22
 & 12
 \\ \hline
 149
 & $(1,2,3,7,11,20)$
 & 22
 & 12
 \\ \hline
 150
 & $(1,2,4,5,11,21)$
 & 22
 & 8
 \\ \hline
 151
 & $(1,3^2,7,11,19)$
 & 22
 & 12
 \\ \hline
 152
 & $(1,3,7^2,11,15)$
 & 22
 & 12
 \\ \hline
 153
 & $(1,3,7,11^3)$
 & 22
 & 12
 \\ \hline
 154
 & $(2,4,5^2,11,17)$
 & 22
 & 8
 \\ \hline
 155
 & $(2,4,5,9,11,13)$
 & 22
 & 8
 \\ \hline
 156
 & $(2,4,5,11^3)$
 & 22
 & 8
 \\ \hline
 157
 & $(1^2,3,8,12,23)$
 & 24
 & 13
 \\ \hline
 158
 & $(1^2,6,8,9,23)$
 & 24
 & 9
 \\ \hline
 159
 & $(1,2,3,7,12,23)$
 & 24
 & 8
 \\ \hline
 160
 & $(1,2,3,8,11,23)$
 & 24
 & 7
 \\ \hline
 161
 & $(1,3,4,5,12,23)$
 & 24
 & 6
 \\ \hline
 162
 & $(1,3,4,7,10,23)$
 & 24
 & 4
 \\ \hline
 163
 & $(1,3,5,7,8,23)$
 & 24
 & 1
 \\ \hline
 164
 & $(1,3,7,8,12,17)$
 & 24
 & 13
 \\ \hline
 165
 & $(1,4,5,6,9,23)$
 & 24
 & 4
 \\ \hline
 166
 & $(2,3^2,8,11,21)$
 & 24
 & 7
 \\ \hline
 167
 & $(2,3,7^2,12,17)$
 & 24
 & 8
 \\ \hline
 168
 & $(2,3,7,8,11,17)$
 & 24
 & 7
 \\ \hline
 169
 & $(2,3,7,11,12,13)$
 & 24
 & 8
 \\ \hline
 170
 & $(2,3,8,11^2,13)$
 & 24
 & 7
 \\ \hline
 171
 & $(3^2,4,7,10,21)$
 & 24
 & 4
 \\ \hline
 172
 & $(3,4,5^2,12,19)$
 & 24
 & 6 
 \\ \hline
 173
 & $(3,4,5,7,10,19)$
 & 24
 & 4
 \\ \hline
 174
 & $(3,4,5,7,12,17)$
 & 24
 & 6
 \\ \hline
 175
 & $(3,4,7^2,10,17)$
 & 24
 & 4
 \\ \hline
 176
 & $(3,6,7^2,8,17)$
 & 24
 & 5
 \\ \hline
 177
 & $(4,5^2,6,9,19)$
 & 24
 & 4
 \\ \hline
 178
 & $(1,4,5,7,9,24)$
 & 25
 & 3
 \\ \hline
 179
 & $(1^2,5,7,13,25)$
 & 26
 & 10
 \\ \hline
 180
 & $(1,2,3,8,13,25)$
 & 26
 & 8
 \\ \hline
 181
 & $(1,2,5,6,13,25)$
 & 26
 & 7
 \\ \hline
 182
 & $(1,2,5,7,13,24)$
 & 26
 & 10
 \\ \hline
 183
 & $(1,5^2,7,13,21)$
 & 26
 & 10
 \\ \hline
 184
 & $(1,5,7^2,13,19)$
 & 26
 & 10
 \\ \hline
 185
 & $(1,5,7,13^3)$
 & 26
 & 10
 \\ \hline
 186
 & $(2,3^2,8,13,23)$
 & 26
 & 8
 \\ \hline
 187
 & $(2,3,8,9,13,17)$
 & 26
 & 8
 \\ \hline
 188
 & $(2,3,8,13^3)$
 & 26
 & 8
 \\ \hline 
 189
 & $(1,2,5,9,11,26)$
 & 27
 & 5
 \\ \hline
 190
 & $(1^2,4,9,14,27)$
 & 28
 & 11
 \\ \hline
 191
 & $(1,3,4,7,14,27)$
 & 28
 & 5
 \\ \hline
 192
 & $(1,3,4,9,14,25)$
 & 28
 & 11
 \\ \hline
 193
 & $(1,4,6,7,11,27)$
 & 28
 & 3
 \\ \hline
 194
 & $(1,4,7,9,14,21)$
 & 28
 & 11
 \\ \hline
 195
 & $(1,4,9^2,14,19)$
 & 28
 & 11
 \\ \hline
 196
 & $(4,6,7^2,11,21)$
 & 28
 & 3
 \\ \hline
 197
 & $(4,6,7,11^2,17)$
 & 28
 & 3
 \\ \hline
 198
 & $(1^2,6,8,15,29)$
 & 30
 & 10
 \\ \hline
 199
 & $(1,2,3,10,15,29)$
 & 30
 & 9
 \\ \hline
 200
 & $(1,2,6,7,15,29)$
 & 30
 & 7
 \\ \hline
 201
 & $(1,5,6,8,11,29)$
 & 30
 & 2
 \\ \hline
 202
 & $(1,5,6,8,15,25)$
 & 30
 & 10
 \\ \hline
 203
 & $(1,6,8,11,15,19)$
 & 30
 & 10
 \\ \hline
 204
 & $(2,3,7,10,15,23)$
 & 30
 & 9
 \\ \hline
 205
 & $(2,5,6,7,15,25)$
 & 30
 & 7
 \\ \hline
 206
 & $(2,6,7^2,15,23)$
 & 30
 & 7
 \\ \hline
 207
 & $(3,5,6,8,11,27)$
 & 30
 & 2
 \\ \hline
 208
 & $(5^2,6,8,11,25)$
 & 30
 & 2
 \\ \hline
 209
 & $(5,6,8,11^2,19)$
 & 30
 & 2
 \\ \hline
 210
 & $(5,6,8,11,15^2)$
 & 30
 & 2
 \\ \hline
 211
 & $(1,2,5,9,16,31)$
 & 32
 & 6
 \\ \hline
 212
 & $(2,3,5,9,16,29)$
 & 32
 & 6
 \\ \hline
 213
 & $(2,5^2,9,16,27)$
 & 32
 & 6
 \\ \hline
 214
 & $(2,5,9^2,16,23)$
 & 32
 & 6
 \\ \hline
 215
 & $(2,5,9,15,16,17)$
 & 32
 & 6
 \\ \hline
 216
 & $(1,3,5,11,14,32)$
 & 33
 & 3
 \\ \hline
 217
 & $(3^2,5,11,14,30)$
 & 33
 & 3
 \\ \hline
 218
 & $(3,5^2,11,14,28)$
 & 33
 & 3
 \\ \hline
 219
 & $(3,5,6,11,14,27)$
 & 33
 & 3
 \\ \hline
 220
 & $(3,5,7,11,14,26)$
 & 33
 & 3
 \\ \hline
 221
 & $(3,5,10,11,14,23)$
 & 33
 & 3
 \\ \hline
 222
 & $(3,5,11^2,14,22)$
 & 33
 & 3
 \\ \hline
 223
 & $(3,5,11,14^2,19)$
 & 33
 & 3
 \\ \hline
 224
 & $(3,5,11,14,15,18)$
 & 33
 & 3
 \\ \hline
 225
 & $(1^2,5,12,18,35)$
 & 36
 & 11
 \\ \hline
 226
 & $(1,3,4,11,18,35)$
 & 36
 & 5 
 \\ \hline
 227
 & $(1,5^2,12,18,31)$
 & 36
 & 11
 \\ \hline
 228
 & $(1,5,7,12,18,29)$
 & 36
 & 11
 \\ \hline
 229
 & $(2,4,11^2,18,25)$
 & 36
 & 7
 \\ \hline
 230
 & $(2,3,5,11,19,36)$
 & 38
 & 4
 \\ \hline
 231
 & $(1,5,7,8,20,39)$
 & 40
 & 3
 \\ \hline
 232
 & $(5,7^2,8,20,33)$
 & 40
 & 3
 \\ \hline
 233
 & $(5,7,8,11,20,29)$
 & 40
 & 3
 \\ \hline
 234
 & $(1^2,6,14,21,41)$
 & 42
 & 11
 \\ \hline
 235
 & $(1,2,5,14,21,41)$
 & 42
 & 7
 \\ \hline
 236
 & $(2,3,5,14,21,39)$
 & 42
 & 7
 \\ \hline
 237
 & $(2,5^2,14,21,37)$
 & 42
 & 7
 \\ \hline
 238
 & $(1,4,5,13,22,43)$
 & 44
 & 3
 \\ \hline
 239
 & $(4,5^2,13,22,39)$
 & 44
 & 3
 \\ \hline
 240
 & $(4,5,11,13,22,33)$
 & 44
 & 3
 \\ \hline
 241
 & $(4,5,13^2,22,31)$
 & 44
 & 3
 \\ \hline
 242
 & $(1,3,5,16,24,47)$
 & 48
 & 5
 \\
 243
 & $(3,5^2,16,24,43)$
 & 48
 & 5
 \\ \hline
 244
 & $(1,5,6,22,33,65)$
 & 66
 & 3
 \end{longtable}
 \rowcolors{0}{}{}
\renewcommand{\arraystretch}{1.0}

\normalsize

\begin{remark} The table does not contain repetitions, as no isomorphism exists between hypersurfaces corresponding to different rows in the table; see \cite[Theorem 2.1]{Esser24}.
\end{remark}

\subsection{Geometric description}
\label{subse:geometricdescription}
We now provide a geometric description for any example in \cref{table fourfolds} but the cubic fourfold, explaining where the K3 structure of the FK3 weighted fourfold comes from. The proof makes use of weighted blow ups, and we refer the reader to \cite{ATW24}, \cite[Section 10]{KM92}, or \cite{Takayuki99} for a comprehensive treatment of the subject.
\begin{proposition}\label{prop blow-up}
\label{prop FK3 structure}
    Let \(X_d\subseteq\P\bigl(a_0,\dots,a_5)\) be a FK3 weighted fourfold with $\dim X_d^{\mathrm{sing}}\le 1$ that is not a cubic fourfold, and call $i$ an index such that $a_i+a_5=d$, given by \cref{thm ai+a5=d}. 
    Then there exists an isomorphism 
    \[(a_0,\dots,a_4)\mathrm{Bl}_p X_d\cong \mathrm{Bl}_{S_d}\P(a_0,\dots,a_4), 
    \]
    $S_d$ being the weighted K3 surface $S_d\subseteq \P(a_0,\dots,a_4)$ defined by $V(x_i,g_d(x_0,\dots,x_4))$, where $g_d$ is a general homogeneous polynomial of degree $d$.

    
    
    
    Furthermore, there exists an isomorphism of Hodge structures $H^4(X_d,\mathbf Q)\cong H^2(S_d,\mathbf Q)$, where $H^4(X_d,\mathbf Q)$ has the weight 2 Hodge structure given by the K3 structure in cohomology.

\end{proposition}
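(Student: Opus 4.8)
The plan is to read off the Hodge isomorphism from the biregular identification of the first part by computing the rational cohomology of the common fourfold
\[
W:=(a_0,\dots,a_4)\mathrm{Bl}_p X_d\;\cong\;\mathrm{Bl}_{S_d}\P(a_0,\dots,a_4)
\]
in two ways and comparing. Every variety occurring here ($X_d$, $S_d$, $\P(a_0,\dots,a_4)$ and the two blow-ups) has at worst quotient singularities, hence is a $\mathbf{Q}$-homology manifold whose rational cohomology carries a pure Hodge structure in each degree (Steenbrink, \cite{Ste76}) and satisfies Poincar\'e duality. In particular, for a proper birational morphism $f\colon W\to Y$ onto such a variety the pullback $f^\ast$ on $H^\bullet(-,\mathbf{Q})$ is split injective (with retraction $f_\ast$), and the classical blow-up and projective-bundle formulae hold with $\mathbf{Q}$-coefficients --- most cleanly by performing both weighted blow-ups on the associated smooth Deligne--Mumford stacks, whose rational cohomology agrees with that of the coarse spaces. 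I will use these inputs for each of the two descriptions of $W$.

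Using the projection $\pi\colon W\to\P(a_0,\dots,a_4)$, which by the first part is the blow-up of the codimension-two quasi-smooth surface $S_d$, the blow-up formula yields an isomorphism of weight-$4$ Hodge structures
\[
H^4(W,\mathbf{Q})\;\cong\;H^4\bigl(\P(a_0,\dots,a_4),\mathbf{Q}\bigr)\;\oplus\;H^2(S_d,\mathbf{Q})(-1).
\]
Since a weighted projective space is a rational-cohomology projective space, the first summand is $H^4(\P(a_0,\dots,a_4),\mathbf{Q})\cong\mathbf{Q}(-2)$, one-dimensional of Hodge--Tate type $(2,2)$.

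Next I would use the contraction $\sigma\colon W\to X_d$, the weighted blow-up of the point $p$. Its exceptional divisor $F=\sigma^{-1}(p)$ has, by the explicit geometry of the first part, the rational cohomology of a weighted projective $3$-space (being the strict transform of a coordinate hyperplane $\{x_i=0\}\cong\P(a_0,\dots,\widehat{a_i},\dots,a_4)$ along which $S_d$ sits as a Cartier divisor); in particular $H^\bullet(F,\mathbf{Q})$ is of Hodge--Tate type and $H^4(F,\mathbf{Q})\cong\mathbf{Q}(-2)$. Because $\sigma$ is an isomorphism away from $p$ and $\sigma^\ast$ is split injective in every degree, the proper-descent sequence of the abstract blow-up square degenerates into short exact sequences $0\to H^k(X_d)\to H^k(W)\to H^k(F)\to 0$ for $k\ge 1$, giving in particular
\[
H^4(W,\mathbf{Q})\;\cong\;H^4(X_d,\mathbf{Q})\;\oplus\;\mathbf{Q}(-2).
\]
Comparing the two computations of $H^4(W,\mathbf{Q})$ produces
\[
H^4(X_d,\mathbf{Q})\oplus\mathbf{Q}(-2)\;\cong\;\mathbf{Q}(-2)\oplus H^2(S_d,\mathbf{Q})(-1)
\]
as polarizable weight-$4$ Hodge structures. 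Since the category of polarizable rational Hodge structures is abelian and semisimple, Krull--Schmidt cancellation removes the common Tate summand, leaving $H^4(X_d,\mathbf{Q})\cong H^2(S_d,\mathbf{Q})(-1)$. Undoing the twist --- that is, renormalizing the K3-type weight-$4$ structure on $H^4(X_d)$ (with $h^{3,1}=1$) to the weight-$2$ structure on $H^4(X_d,\mathbf{Q})(1)$ (with $h^{2,0}=1$) --- this is exactly the asserted isomorphism of K3-type Hodge structures $H^4(X_d,\mathbf{Q})\cong H^2(S_d,\mathbf{Q})$.

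The main obstacle is the validity of the cohomological blow-up formulae in the present weighted and singular context, together with the identification of the contracted divisor $F$ and the fact that its cohomology is purely of Tate type: this is what forces the extra summand on the $\sigma$-side to be a single $\mathbf{Q}(-2)$, matching the $\mathbf{Q}(-2)$ coming from $H^4(\P(a_0,\dots,a_4))$ on the $\pi$-side and making the cancellation clean. (The weaker conclusion that the \emph{transcendental} parts match, $H^4(X_d)_{\mathrm{tr}}\cong H^2(S_d)_{\mathrm{tr}}(-1)$, already follows from the two formulae regardless of the precise Betti numbers of $F$, since both blow-up contributions are Tate.) I expect to handle the obstacle by transporting both blow-ups to the smooth stacks underlying the weighted projective data, where the formulae are classical and manifestly compatible with Hodge structures --- all maps involved (pullbacks, the Gysin map of the exceptional projective bundle, and the restriction $H^\bullet(W)\to H^\bullet(F)$) being morphisms of Hodge structures of the correct weight and twist --- and then descending to the coarse spaces via the $\mathbf{Q}$-homology-manifold property. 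One should also record that both cohomology groups are polarizable, so that the final cancellation is legitimate.
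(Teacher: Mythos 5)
Your proposal proves only the ``Furthermore'' clause of the proposition, and it does so by assuming the first clause. The statement has two assertions: (i) the isomorphism $(a_0,\dots,a_4)\mathrm{Bl}_p X_d\cong \mathrm{Bl}_{S_d}\P(a_0,\dots,a_4)$, including the identification of $S_d$ with $V(x_i,g_d)$, and (ii) the Hodge-theoretic consequence $H^4(X_d,\mathbf Q)\cong H^2(S_d,\mathbf Q)$. Every step of your argument invokes ``the first part'' (the existence of the common fourfold $W$, the fact that $\sigma^{-1}(p)$ is the strict transform of $\{x_i=0\}$), but nothing in your text establishes it, and this is precisely where the paper's proof lives. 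Concretely, what is missing is: (a) the observation that, since $a_i+a_5=d$ and the general $X_d$ is quasi-smooth (so its punctured affine cone is smooth at the point above $p=(0:\dots:0:1)$), the defining polynomial is linear in $x_5$, $f_d=x_5h_{a_i}(x_0,\dots,x_4)+g_d(x_0,\dots,x_4)$, and can be normalized by an automorphism of $\P(a_0,\dots,a_5)$ to $f_d=x_ix_5+g_d$ --- this is what \emph{defines} $S_d=V(x_i,g_d)$ and makes the projection from $p$ birational on $X_d$; and (b) the resolution of that projection by the $(a_0,\dots,a_4)$-weighted blow-up at $p$, the identification $(a_0,\dots,a_4)\mathrm{Bl}_p\P(a_0,\dots,a_5)\cong\mathbf{P}(\mathcal{O}\oplus\mathcal{O}(-a_5))$, the computation of the strict transform of $X_d$ (dividing the total transform $\{z^{a_i}x_ix_5+z^dg_d=0\}$ by $z^{a_i}$), and the recognition of this strict transform as $\mathrm{Bl}_{S_d}\P(a_0,\dots,a_4)$. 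Without (a) and (b) there is no fourfold $W$, indeed no surface $S_d$, to feed into your cohomological comparison, so the proposal is a proof of (ii) conditional on (i), not a proof of the proposition.

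Granting the first part, your treatment of the Hodge assertion is correct, and it follows essentially the paper's route in expanded form: the paper dispatches this claim in one sentence by invoking the blow-up formula on cohomology, valid in the quotient-singular setting by Blache's theorem, which implicitly means exactly your two-sided computation of $H^4(W,\mathbf Q)$. Your additional care is sound and fills in details the paper leaves implicit: passing to the underlying smooth Deligne--Mumford stacks (or citing Blache) to justify the formulae, identifying $F=\sigma^{-1}(p)$ with the strict transform of $\{x_i=0\}\cong\P(a_0,\dots,\hat a_i,\dots,a_4)$ (legitimate, since $S_d$ is a Cartier divisor in that hyperplane, so blowing it up does not change the hyperplane) so that $H^\bullet(F,\mathbf Q)$ is of Tate type, splitting the descent sequence using the split injectivity of $\sigma^\ast$, and cancelling the common $\mathbf{Q}(-2)$ summand via semisimplicity of polarizable Hodge structures. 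To turn the proposal into a complete proof you must prepend the geometric construction (a)--(b) above.
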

\begin{proof}
Let $x_0,\dots,x_5$ be graded homogeneous coordinates for $\P(a_0,\dots,a_5)$, with $\deg x_i=a_i$. From the hypotheses on weights and degree it follows that the polynomial defining $X_d$ is linear in $x_5$, i.e.\ it is of the form $f_d=x_5h_{a_i}(x_0,\dots,x_4)+g_d(x_0,\dots,x_4)$, where $\deg(h_{a_i})=a_i$ and $\deg(g_d)=d$. Up to a homogeneous transformation of the graded ring defining the weighted projective space, and hence up to an automorphism of the weighted projective space, we can assume that $h_{a_i}=x_i$, i.e.\ $f_d=x_ix_5+g_d(x_0,\dots,x_4)$. We define $S_d:=V(x_i,g_d)\subseteq \P(a_0,\dots,a_4)$, that is a K3 surface by \cref{cor the k3 is a k3}.

Let $\pi:\P(a_0,\dots,a_5) \dashrightarrow \P\bigl(a_0,\dots,a_4)$ be the projection from $p=(0:\dots:0:1)$. We can resolve this rational map by performing a weighted blow up at the point $p$, with weights $(a_0,\dots,a_4)$. Here the blow ups should be rather considered in the category of Deligne-Mumford stacks. 
 The morphism resolving $\pi$ is then the morphism $\widetilde{\pi}$, which is the restriction of the projection $\P(a_0,\dots,a_4)\times \P(a_0,\dots,a_5) \to \P(a_0,\dots,a_4)$ to $(a_0,\dots,a_4)\mathrm{Bl}_p\P(a_0,\dots,a_5)$. We have a commutative diagram
\begin{center}
\begin{tikzcd}[column sep=0.8cm]
    			&(a_0,\dots,a_4)\mathrm{Bl}_p\P(a_0,\dots,a_5) \ar[dr, "\widetilde{\pi}"]  \arrow["h"' ,dl]&
    			\\ 
    			\P(a_0,\dots,a_5) \arrow[rr, dashed, "\pi"] && \P(a_0,\dots,a_4) 
\end{tikzcd}
\end{center}
Now, consider the diagram
\begin{center}
\begin{tikzcd}[column sep=0.8cm]
    			& \widetilde{X_d} \ar[dr, "{\widetilde{\pi}}_{|\widetilde{X_d}}"]  \arrow[dl, "h_{|\widetilde{X_d}}"']&
    			\\ 
    			X_d \arrow[rr, dashed, "\pi_{|X_d}"] && \P(a_0,\dots,a_4) 
\end{tikzcd},
\end{center}
where $\widetilde{X_d}$ is the strict transform of $X_d$ via $h$. The morphism ${\widetilde{\pi}}_{|\widetilde{X_d}}$ is birational, and the dimension of the fibers jumps exactly at the common zero locus of $x_i$ and $g_d$ in $\P(a_0,\dots,a_4)$ (that is, on $S_d$). We recall that $(a_0,\dots,a_4)\mathrm{Bl}_p\P(a_0,\dots,a_5)$ is a toric variety. Then, let $z$ be the defining section of the exceptional divisor $E\cong \P(a_0,\dots,a_4)$ of $h$. It follows that the total transform $h^*(X_d)$ of $X_d$ via $h$ is given by $\{z^{a_i}x_ix_5+g_d(z^{a_0}x_0,\dots,z^{a_4}x_4)=0\}$, and hence an equation of $\widetilde{X_d}$ inside $(a_0,\dots,a_4)\mathrm{Bl}_p\P(a_0,\dots,a_5)$ is obtained by taking out $z^{a_i}$ from the equation above. Now, we have $$(a_0,\dots,a_4)\mathrm{Bl}_p\P(a_0,\dots,a_5) \cong \mathcal{P}roj(\mathrm{Sym}^{\bullet}(\mathcal{O}\oplus \mathcal{O}(-a_5)))=:\mathbf{P}(\mathcal{O}\oplus\mathcal{O}(-a_5)),$$ where $\mathcal{P}roj(\mathrm{Sym}^{\bullet}(\mathcal{O}\oplus \mathcal{O}(-a_5)))$ denotes the stack-theoretic Proj of the symmetric algebra $\mathrm{Sym}^{\bullet}(\mathcal{O}\oplus \mathcal{O}(-a_5))$ (see for example \cite[Subsection 1.2]{QR22}, or \cite[Subsection 3.1]{ATW24}).  But then the strict transform $\widetilde{X}_d$ is the zero locus of a section of $\mathcal{O}_{\mathbf{P}(\mathcal{O}\oplus \mathcal{O}(-a_5))}(1)\otimes \pi^*\mathcal{O}(-a_i)$, where $\pi$ is the structural morphism of $\mathbf{P}(\mathcal{O}\oplus\mathcal{O}(-a_5))$, and this coincides with the blow up $\mathrm{Bl}_{S_d}\P(a_0,\dots,a_4)$. On the other hand, by construction we also have $\widetilde{X_d} \cong (a_0,\dots,a_4)\mathrm{Bl}_p X_d$, hence the claim.

The claim on the Hodge structure follows from the blow up formula on cohomology, which adapts to our singular setting by \cite[Theorem 1.21]{blache1996chern}
\end{proof}

\begin{remark} The isomorphism of Hodge structures in \cref{prop blow-up} can also be deduced (over $\C$) with some computations, via Griffiths' residue \cref{thm Hodge structure of Xd}. Indeed, consider \(X_d= V(f_d)\), where \(f_d=x_5x_i+g_d( \hat{x_5})\),  \(i \in \{0,\dots,4\}\), as in the statement. Without loss of generality, we can assume that \(x_i=x_0\). Then the associated K3 surface is \(S_d=V(x_0, g_d(x_0, \ldots, x_4))\), i.e.\ it is the weighted hypersurface \(S=V(g_d)\subseteq \P(a_1,\dots,a_4)\). We have
\begin{align*}
   h^{2,2}_{\mathrm{prim}}(X_d) &= \dim (S(a_0,\dots,a_5)/J_{f_d})_{d}  \\
   h^{1,1}_{\mathrm{prim}}(S)&= \dim (S(a_1,a_2,a_3,a_4)/J_{g_d})_{d}
\end{align*}
and 
\begin{align*}
  h^{3,1}_{\mathrm{prim}}(X_d)&= \dim (S(a_0,\dots,a_5)/J_{f_d})_{0}  \\
  h^{2,0}_{\mathrm{prim}}(S)&= \dim (S(a_1,a_2,a_3,a_4)/J_{g_d})_{0}.
\end{align*}

We compute \(J_{f_d}=\langle x_0, \frac{\partial g_d}{\partial x_1}, \frac{\partial g_d}{\partial x_2}, \frac{\partial g_d}{\partial x_3}, \frac{\partial g_d}{\partial x_4}, \frac{\partial f}{\partial x_0} \rangle\) and \(J_{g_d}=\langle \frac{\partial g_d}{\partial x_1}, \frac{\partial g_d}{\partial x_2}, \frac{\partial g_d}{\partial x_3}, \frac{\partial g_d}{\partial x_4}\rangle \). Note that in \(S(a_0,\dots,a_5)/J_{f_d}\) we are imposing \(\frac{\partial{f_d}}{\partial{x_0}}=0\), which means \(x_5+\frac{\partial{g_d}}{\partial{x_0}}=0\). This implies that \(x_5\) depends on the variables \(x_1,x_2,x_3,x_4\) in the quotient ring \(S(a_0,\dots,a_5)/J_{f_d}\). In particular, the rings \(S(a_0,\dots,a_5)/J_{f_d}\) and \(S(a_1,a_2,a_3,a_4)/J_{g_d}\) are isomorphic, and hence also the subspaces of homogeneous polynomials of a fixed degree are.
\end{remark}

\begin{remark} \label{rmk:defo}

We have already noticed how, in our setting, the first-order deformations of our Fano are controlled by $H^1(T_{X_d})$. Thanks to the previous computation, this is isomorphic to $H^{2,2}_{\textrm{prim}}(X_d)$, which in turn is isomorphic to $H^{1,1}_{\textrm{prim}}(S_d)$. The latter can be identified with the Kernel of the map $H^1(T_{S_d}) \to H^2(\mathcal{O}_{S_d})$ given by the multiplication with $c_1(L)$ from the Atiyah extension. In other words, we are parametrizing the deformations of the couple $(S,L)$, where $L\cong \mathcal O_S(\text{lcm}\{a_i\}_i)$ is the ample generator of the Picard group. This observation is especially interesting in light of the above \cref{prop blow-up}. In fact, normally, for $Z \subset Y$, the deformation theory of $\widetilde{Y}= \Bl_ZY$ would be computed via the exact sequence
\[
0 \to f^*(\Omega^1_Y) \to \Omega^1_{\widetilde{Y}} \to j_* \Omega^1_{E/Z} \to 0,
\]
with $f$ the blow up map and $j: E \to \widetilde{Y}$ the inclusion of the exceptional divisor $E$. In our case, the situation is in fact much simpler, with the K3 $S_d$ being the only responsible \emph{a posteriori} for the deformations of $X_d$.

\end{remark}

\begin{corollary}\label{rationality} Every FK3 weighted fourfold $X_d$ with singular locus of dimension at most 1 that is not a cubic fourfold is rational. 
\end{corollary}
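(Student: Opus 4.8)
The plan is to read off rationality directly from the explicit birational model furnished by \cref{prop blow-up}, which already contains all the geometric content. The key observation is simply that a (weighted) blow up is a birational morphism, so the isomorphism of \cref{prop blow-up} places $X_d$ and a weighted projective space in the same birational class. Note first that the hypothesis that $X_d$ is \emph{not} a cubic fourfold guarantees $a_5>1$, so that \cref{thm ai+a5=d} applies and produces an index $i$ with $a_i+a_5=d$; this is exactly what is needed to invoke \cref{prop blow-up}.

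Concretely, by \cref{prop blow-up} there is an isomorphism
\[
(a_0,\dots,a_4)\mathrm{Bl}_p X_d \;\cong\; \mathrm{Bl}_{S_d}\P(a_0,\dots,a_4),
\]
where $S_d\subseteq\P(a_0,\dots,a_4)$ is the associated K3 surface. First I would observe that the source is birational to $X_d$: the weighted blow up at the point $p$ is a proper morphism that is an isomorphism over the complement of $p$, hence birational. Symmetrically, the blow up $\mathrm{Bl}_{S_d}\P(a_0,\dots,a_4)\to\P(a_0,\dots,a_4)$ is an isomorphism away from the (codimension two) surface $S_d$, hence also birational. Chaining these two birational maps with the isomorphism above yields a birational equivalence
\[
X_d \;\dashrightarrow\; \P(a_0,\dots,a_4).
\]

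It then remains only to recall that the weighted projective space $\P(a_0,\dots,a_4)$ is a toric variety, so it contains a dense algebraic torus and is therefore rational, i.e.\ birational to $\P^4$. Composing, $X_d$ is birational to $\P^4$ and hence rational. There is essentially no serious obstacle in the corollary itself: all the genuinely nontrivial work—constructing $S_d$, identifying the two blow ups, and tracking the Hodge structure—has already been carried out in \cref{prop blow-up}. The only points requiring (routine) care are that the stacky/weighted blow ups induce birational morphisms on the underlying varieties and that weighted projective space is rational, neither of which presents any difficulty.
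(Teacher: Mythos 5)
Your proof is correct and follows exactly the paper's own route: not being a cubic forces $a_5>1$, so \cref{thm ai+a5=d} yields an index $i$ with $a_i+a_5=d$, and \cref{prop blow-up} then makes $X_d$ birational to a blow up of $\P(a_0,\dots,a_4)$, hence rational. Your write-up merely makes explicit the routine facts (blow ups are birational, weighted projective space is rational) that the paper leaves implicit.
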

\begin{proof}
This is a straightforward consequence of \cref{thm ai+a5=d,prop blow-up}. In fact, by \cref{thm ai+a5=d} for any $X_d$ as in the statement there exists an index $i$ such that $a_i+a_5=d$, hence by \cref{prop blow-up} it is birational to the blow up of some weighted projective space, which gives the rationality. 
\end{proof}
See \cite[Proposition 3.1]{Esser25} for a different (but related) proof of \cref{rationality} in the case $2a_4=2a_5=d$, and \cite[Theorem 1.2]{Okada19} for an analogous statement for weighted Fano threefolds.

The results of \cref{rationality} can be compared with the derived categorical approach. In fact, our approach can be also found in the work of Kuznetsov, and later on Perry. As in \cite[\S 4.4]{Kuz19}, extended by Perry in \cite[Example 6.11]{Per22}, the derived category of perfect complexes $\text{D}_{\text{perf}}(X)$ of our FK3 contains a CY2 category as the orthogonal to a collection of exceptional objects, which is commutative via the blow up map. This holds for all of the examples in Table \cref{table fourfolds}. 

As we are going to see, if we drop the hypothesis on the codimension of the singular locus, there are two examples which do dot fall into this pattern, see \cref{table fourfolds from del Pezzo}.

\begin{remark}\label{remark double suspension}
In the case where $X_d\subseteq\P(a_0,\dots,a_5)$ is such that $a_4=a_5=\frac{d}{2}$, we have another interesting geometric description for $X_d$. Indeed, up to a change of coordinates in $x_4,x_5$ we can transform the equation of $X_d$ given in the proof of \cref{prop blow-up} to $\tilde f_d=x_5^2+x_4^2+g_d(x_0,\dots,x_3)$. We remark that this is a consequence of the quasi-smoothness of $X_d$, that forces the quadratic part of the equation of $X_d$, in the coordinates $x_4,x_5$, to have non-zero eigenvalues; furthermore, observe that we did not change the equation defining the K3 surface $S=V(g_d)$. Geometrically, we obtain that the projection $\P\bigl(a_0,\dots,a_3,\frac{d}{2},\frac{d}{2}\bigl)\dashrightarrow\P\bigl(a_0,\dots,a_3,\frac{d}{2}\bigl)$ restricts to a double cover $X_d\xrightarrow{\phi_1} \P\bigl(a_0,\dots,a_3,\frac{d}{2}\bigl)$, with branch locus $Y:=V(x_4^2+g_d(x_0,\dots,x_3))$, and that the further projection $\P\bigl(a_0,\dots,a_3,\frac{d}{2}\bigl)\dashrightarrow\P\bigl(a_0,\dots,a_3\bigl)$ restricts to a double cover $Y\xrightarrow{\phi_2} \P(a_0,\dots,a_3)$, with branch locus  $S$, i.e.\ $X_d$ is described by the following diagram:
\[
        \begin{tikzcd}
        & & X_d \ar[d, "\phi_1" left, "2:1" near start] \ar[r, hook] & \P\Bigl(a_0,\dots,a_3,\frac{d}{2},\frac{d}{2}\Bigl) \\
        & Y \ar[r, hook] \ar[d, "\phi_2" left, "2:1" near start] & \P\Bigl(a_0,\dots,a_3,\frac{d}{2}\Bigl) &  \\
        S \ar[r, hook] & \P(a_0,\dots,a_3). &
        \end{tikzcd}
        \]

It is worth noting that the same construction is performed in \cite[Theorem C]{FRZ19}, where the Hodge-theoretical behavior was already described.
\end{remark}

\section{Singularities}
\label{section:sing}
In this section, we determine the type of singularities of the FK3 weighted fourfolds. Since \( X_d \subset \P(a_0, \ldots, a_4) \) is general, the cyclic quotient singularities of the hypersurfaces can be described by following \cite[\S 2.5]{Brown_Kasprzyk} and \cite[\S 10]{Fletcher00}. We then apply the Reid--Shepherd-Barron--Tai criterion, as stated in \cite[p.\ 376, Theorem]{Reid_criterion}.
\medskip

The next paragraph is taken from \cite[\S 2.5]{Brown_Kasprzyk}, and we provide it for the reader's convenience. 
\medskip

The orbifold strata $\Pi$ in the ambient space \( \P(a_0, \ldots, a_s) \) correspond to subsets of indices \( I \subset \{1, \ldots, s\} \) for which 
\[
r_I := \gcd\{a_i \mid i \in I\} > 1.
\]
We only need to work with the maximal \( I \) for any given \( r = r_I \), so we always assume that this is the case.

Given a hypersurface \( X_d = V(f_d) \subset \P(a_0, \ldots, a_s) \), consider an ambient orbifold stratum \( \Pi = \P(a_{i_1}, \ldots, a_{i_t}) \) of transverse type \( \frac{1}{r}(b_1, \ldots, b_{s-t}) \). Then
\[
\{a_{i_1}, \ldots, a_{i_t}, b_1, \ldots, b_{s-t}\} = \{a_0, \ldots, a_s\}, 
\]
and one of the following two things can happen:

\begin{itemize}
  \item[(i)] The polynomial \( f_d\) vanishes on \( \Pi \), so that \( \Pi \subset X \). In this case, at any point \( P \in X \cap \Pi^\circ \), \( X \) has a transverse quotient singularity of type \( \frac{1}{r}(b_1, \ldots, \hat{b}_j, \ldots, b_{s-t}) \), where \( x_j \) is a tangent variable to \( X \) at \( P \), i.e.\ $\mathrm{deg}(x_j)=d-rk$ for some positive integer $k$. Here $\Pi^\circ$ consists of the points in $\Pi$ where all the coordinates are non-vanishing. Note that there may be several tangent variables at \( P \), but their weights are congruent modulo \( r \), and so any one may be used.

  \item[(ii)] \( V(f_d) \) cuts a codimension-one locus transversely inside \( \Pi \). In this case, at any point \( P \in X \cap \Pi^\circ \), the hypersurface \( X \) has a transverse quotient singularity of type \( \frac{1}{r}(b_1, \ldots, b_{s-t}) \).
\end{itemize}

This is enough to determine the singularities of \( X \).
\medskip

We recall now the Reid--Shepherd-Barron--Tai criterion.

\begin{theorem}{\cite[p.\ 376, Theorem]{Reid_criterion}}\label{thm Reid terminal criterion}
Let \(\frac{1}{r}(b_1, \ldots, b_s)\) be a well-formed cyclic quotient singularity. The singularity is canonical (resp. terminal) if and only if
\[ 
\sum_{j=1}^{s} \left[ \frac{kb_j}{r}
\right ] \geq 1
\]
(resp.\( > 1\)) for all \(k=1, \ldots, r-1.\) Here, for any rational number $q$, the number $[q]$ denotes the fractional part of $q$.
\end{theorem}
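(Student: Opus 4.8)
The plan is to realize the cyclic quotient singularity torically and to read off all discrepancies from the combinatorics of a single cone, following Reid's \emph{Young person's guide to canonical singularities} and the standard toric discrepancy calculus. Concretely, I would write the singularity as the affine toric variety $U_\sigma=\C^s/\mu_r$, where $\mu_r$ acts with weights $(b_1,\dots,b_s)$; this corresponds to the simplicial cone $\sigma=\mathrm{Cone}(e_1,\dots,e_s)$ (the first orthant) in $N_\R=\R^s$, taken with respect to the overlattice $N=\Z^s+\Z\cdot v_0$, where $v_0=\tfrac1r(b_1,\dots,b_s)$. Well-formedness guarantees that the action is free in codimension one, equivalently that each $e_i$ is still a primitive generator of its ray in $N$ and that $N/\Z^s\cong\Z/r\Z$ is generated by $v_0$. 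In particular $U_\sigma$ is $\Q$-factorial and $\Q$-Gorenstein, so the discrepancy formalism applies.

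Next I would invoke the toric discrepancy formula. Since $\sigma$ is simplicial with primitive ray generators $e_1,\dots,e_s$, there is a unique $m_\sigma\in M_\Q$ with $\langle m_\sigma,e_i\rangle=1$ for all $i$, namely $m_\sigma=(1,\dots,1)$; the associated support function $\psi(x)=\langle m_\sigma,x\rangle=\sum_j x_j$ computes discrepancies, in the sense that the toric valuation attached to a primitive lattice vector $v\in\sigma\cap N$ has discrepancy $\psi(v)-1$. Because toric singularities are toroidal, every discrepancy over $U_\sigma$ is computed by such a toric divisor, so $U_\sigma$ is canonical (resp.\ terminal) if and only if $\psi(v)\ge 1$ (resp.\ $>1$) for every nonzero lattice vector $v\in\sigma\cap N$, where in the terminal case the primitive ray generators $e_i$ (for which $\psi=1$) must be excluded from the test.

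The heart of the argument is then a coset minimisation that reduces these infinitely many inequalities to the $r-1$ stated ones. The cosets of $\Z^s$ in $N$ are indexed by $k=0,\dots,r-1$ via $k\mapsto kv_0+\Z^s$, and inside the coset of $kv_0$ the lattice point realising the minimum of $\psi$ within $\sigma$ is exactly $v_k:=\bigl(\{kb_1/r\},\dots,\{kb_s/r\}\bigr)$, where $\{\,\cdot\,\}$ denotes the fractional part (the quantity written $[\,\cdot\,]$ in the statement): each coordinate lies in $[0,1)$, so adding any nonzero element of $\Z^s$ while remaining in the first orthant can only increase $\psi$. The coset $k=0$ contributes the points of $\Z^s_{\ge0}$, whose smallest nonzero $\psi$-value is $1$, attained precisely at the $e_i$. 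Hence the minimum of $\psi$ over the new cosets is $\min_{1\le k\le r-1}\psi(v_k)=\min_{1\le k\le r-1}\sum_{j=1}^s\{kb_j/r\}$, and since for $k\ne0$ the point $v_k$ is neither the origin nor a ray generator (it lies outside $\Z^s$), the canonical and terminal criteria translate verbatim into $\sum_j\{kb_j/r\}\ge 1$ (resp.\ $>1$) for all $k=1,\dots,r-1$.

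The main obstacle I anticipate is justifying that testing toric divisorial valuations suffices, i.e.\ that the minimal discrepancy over $U_\sigma$ is always attained by a toric divisor, together with the precise combinatorial dictionary for canonicity and terminality — in particular handling the boundary faces of $\sigma$ (which correspond to the lower-dimensional strata, and on which some $v_k$ may lie) uniformly with the interior. Granting the standard toric discrepancy formalism, the remaining work is the elementary but careful minimisation, where one must verify that the fractional-part representative $v_k$ genuinely minimises $\psi$ in its coset and is never accidentally one of the excluded ray generators.
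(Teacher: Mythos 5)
The paper does not prove this statement at all: it is quoted directly from Reid (\cite[p.\ 376, Theorem]{Reid_criterion}), so there is no internal proof to compare against. Your toric argument is correct, and it is in fact the standard proof of the cyclic Reid--Tai criterion, essentially the one found in the cited source: realize \(\frac{1}{r}(b_1,\dots,b_s)\) as the first orthant \(\sigma\) taken with respect to the extended lattice \(N=\mathbf{Z}^s+\mathbf{Z}\cdot\frac{1}{r}(b_1,\dots,b_s)\), use the toric discrepancy function \(\psi(x)=\sum_j x_j\) (so a primitive \(v\in\sigma\cap N\) has discrepancy \(\psi(v)-1\)), and minimize \(\psi\) coset by coset, the minimizer of the \(k\)-th nonzero coset being \(v_k=\bigl([kb_1/r],\dots,[kb_s/r]\bigr)\) because any other point of that coset lying in \(\sigma\) differs from \(v_k\) by a vector with nonnegative integer entries. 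Two small points deserve the one-line justifications you only gesture at: well-formedness (in fact the weaker condition \(\gcd(r,b_1,\dots,b_s)=1\), which it implies) is what guarantees \(v_k\notin\mathbf{Z}^s\) for \(1\le k\le r-1\), so \(v_k\) is neither the origin nor a ray generator; and the reduction to toric valuations --- that \(X_\sigma\) is canonical (resp.\ terminal) if and only if \(\psi(v)\ge 1\) (resp.\ \(>1\)) for all nonzero \(v\in\sigma\cap N\) (resp.\ excluding the primitive ray generators) --- is a genuinely standard fact (Reid, \emph{Decomposition of toric morphisms}; Cox--Little--Schenck, Prop.\ 11.4.12), obtained by checking discrepancies on a toric log resolution, so leaving it as a citation is a legitimate modular structure rather than a gap.
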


In the following \cref{table sigularities}, since we are only interested in the type of singularities, we will not write the zeros arising from the reduction modulo $r_I$. In the “type” column of the table, the notation is as follows: terminal means that all singularities are terminal, canonical means that all singularities are canonical and at least one is not terminal, and klt indicates that at least one singularity is strictly klt. 

We leave the computations in the table to the reader and focus instead on providing an example that highlights the phenomena associated with the singularities inherited by the hypersurface from the ambient weighted projective space.

\begin{example}
Consider the FK3 \#8 in \cref{table sigularities}, i.e.\ a general hypersurface \(X_6\subseteq \P(1^3,2,3,4)\). The singular locus of \(\P(1^3,2,3,4)\) contains the weighted projective line \(\P(2,4)\), defined by $V(x_0,x_1,x_2,x_4)$, and the type of singularity along $\P(2,4) \setminus \{(0:\cdots:0:1)=:P\}$ is \(\frac{1}{2}(1,1,1,3)=\frac{1}{2}(1,1,1,1)\), whereas the singularity $P$ is of type  \(\frac{1}{4}(1,1,1,2,3)\). The weighted projective line \(\P(2,4)\) intersects the hypersurface \(X_6\) transversely in finitely many points. The points of $X_6$ lying in $\P(2,4) \setminus \{P\}$ are singularities of type \(\frac{1}{2}(1,1,1,1)\) as well. Instead, since the variable $x_3$ is tangent to the hypersurface $X_6$ at the point $P$, it follows that $P$ is a singularity of type \(\frac{1}{4}(1,1,1,3)\) on $X_6$. To conclude, the singular locus of the weighted projective space \(\P(1^3,2,3,4)\) contains the isolated singularity $(0: \cdots :0:1:0)$, which is of type \(\frac{1}{3}(1,1,1,2,4)=\frac{1}{3}(1,1,1,2,1)\), and does not belong to \(X_6\).

\end{example}

\tiny
\renewcommand{\arraystretch}{1.3}
\rowcolors{2}{gray!10}{white}
\begin{longtable}{c||c|c|c|c}
\caption{Singularities of FK3 weighted fourfolds $X_d\subseteq \P(\underline a)$ with \(\dim X_d^{\mathrm{sing}}\le 1\).
} 
\label{table sigularities}
\\
 \rowcolor{white}
 \(\#\)
 & $(\underline a)$
 & $d$
 & singularities
 & type 
 \\ \hline \hline \endfirsthead
 
 \rowcolor{white}
  \(\#\)
 & $(\underline a)$
 & $d$
 & singularities
 & type 
 \\ \hline \hline \endhead 

 1
 & $(1^6)$
 & 3
 & 
 & 
 \\ \hline

 2
 & $(1^5,3) $
 & 4
 & \(\frac{1}{3}(1,1,1,1)\)
 & terminal
 \\ \hline

 3
 & $(1^4,2^2)$
 & 4
 & \(\frac{1}{2}(1,1,1,1)\)
 & terminal
 \\ \hline
 
 4
 & $(1^4,2,4)$
 & 5
 & \(\frac{1}{2}(1,1,1) \ \ \frac{1}{4}(1,1,1,2)\)
 & terminal
 \\ \hline
 
 5
 & $(1^3,2^2,3)$
 & 5
 & \(\frac{1}{2}(1,1,1) \ \ \frac{1}{3}(1,1,1,2)\) 
 & terminal
 \\ \hline
 
 6
 & $(1^4,3,5) $
 & 6
 & \( \frac{1}{5}(1,1,1,3) \)
 & terminal
 \\ \hline
 7
 & $(1^3,2^2,5)$ 
 &  6
 &  \( \frac{1}{2}(1,1,1,1) \ \  \frac{1}{5}(1,1,2,2) \)
 & terminal
 \\ \hline
 8
 & $(1^3,2,3,4)$ 
 & 6
 & $\frac{1}{4}(1,1,1,3) \ \ \frac{1}{2}(1,1,1,1)$
 & terminal
 \\ \hline
 9
 & $(1^3,3^3)$
 & 6
 & \( \frac{1}{3}(1,1,1) \)
 & canonical
 \\ \hline
 10
 & $(1^2,2^2,3^2)$
 & 6
 & $\frac{1}{3}(1,1,2,2) \ \ \frac{1}{2}(1,1,1,1)$
 & terminal
 \\ \hline
 11
 & $(1^3,2,3,6)$
 & 7
 & $\frac{1}{2}(1,1,1) \ \ \frac{1}{3}(1,1,2) \ \  \frac{1}{6}(1,1,2,3)$
 & terminal
 \\ \hline
 12
 & $(1^2,2^2,3,5)$
 & 7
 & $\frac{1}{2}(1,1,1) \ \ \frac{1}{3}(1,1,2,2,2) \ \  \frac{1}{5}(1,1,2,3)$
 & terminal
 \\ \hline 
 13
 & $(1^2,2,3^2,4)$
 & 7
 & $\frac{1}{2}(1,1,1) \ \ \frac{1}{3}(1,2,1)) \ \  \frac{1}{4}(1,1,2,3)$
 & terminal
 \\ \hline
 14
 & $(1^3,2,4,7)$
 & 8
 & $\frac{1}{2}(1,1,1,1) \ \ \frac{1}{7}(1,1,2,4)$
 & terminal
 \\ \hline
 15
 & $(1^2,2^2,3,7)$
 & 8
 & $\frac{1}{2}(1,1,1,1) \ \ \frac{1}{3}(1,1,2,1)) \ \ \frac{1}{7}(1,2,2,3)$
 & terminal
 \\ \hline
 16
 & $(1^2,2,3,4,5)$
 & 8
 & $\frac{1}{2}(1,1,1,1) \ \ \frac{1}{3}(1,1,1,2)) \ \ \frac{1}{5}(1,1,2,4)$
 & terminal
 \\ \hline
 17
 & $(1,2^2,3^2,5)$  
 & 8
 & $\frac{1}{2}(1,1,1,1) \ \ \frac{1}{3}(1,2,2)) \ \ \frac{1}{5}(1,2,2,3)$
 & terminal
 \\ \hline
 18
 & $(1^3,3,4,8)$
 & 9
 & $\frac{1}{4}(1,1,3) \ \ \frac{1}{8}(1,1,3,4) $
 & terminal
 \\ \hline
 19
 & $(1^2,2,3^2,8)$
 & 9
 & $\frac{1}{2}(1,1,1) \ \ \frac{1}{3}(1,1,2,2) \ \ \frac{1}{8}(1,2,3,3)$
 & terminal
 \\ \hline
 20
 & $(1^2,2,3,4,7)$
 & 9
 & $\frac{1}{2}(1,1,1,1) \ \ \frac{1}{4}(1,2,3,3) \ \ \frac{1}{7}(1,1,3,4)$
 & terminal
 \\ \hline
 21
 & $(1^2,3^2,4,6)$
 & 9
 & $\frac{1}{2}(1,1,1) \ \ \frac{1}{4}(1,3,3,2) \ \ \frac{1}{6}(1,1,3,4) \ \ \frac{1}{3}(1,1,1)$ 
 & klt
 \\ \hline
 22
 & $(1^2,3,4^2,5)$
 & 9
 & $\frac{1}{4}(1,3,1) \ \ \frac{1}{5}(1,1,3,4)$ 
 & terminal
 \\ \hline
 23
 & $(1,2,3^2,4,5)$
 & 9
 & $\frac{1}{2}(1,1,1) \ \ \frac{1}{3}(1,2,1,2) \ \ \frac{1}{4}(1,2,3,3) \ \ \frac{1}{5}(1,2,3,3)$ 
 & terminal
 \\ \hline 
 24
 & $(1,2^2,3^2,7)$
 & 9
 & $\frac{1}{2}(1,1,1) \ \ \frac{1}{3}(1,2,2,1) \ \ \frac{1}{7}(1,2,3,3)$ 
 & terminal
 \\ \hline
 25
 & $(1^3,3,5,9)$
 & 10
 & $\frac{1}{3}(1,1,2) \ \ \frac{1}{9}(1,1,3,5)$
 & terminal
 \\ \hline
 26
 & $(1^2,2^2,5,9)$
 & 10
 &  $\frac{1}{2}(1,1,1,1) \ \ \frac{1}{9}(1,2,2,5)$
 & terminal
 \\ \hline
 27
 & $(1^2,2,3,4,9)$
 & 10
 & $\frac{1}{2}(1,1,1,1) \ \ \frac{1}{3}(1,2,1) \ \ \frac{1}{9}(1,1,2,3,4) \ \ \frac{1}{4}(1,1,3,1)$
 & terminal
 \\ \hline
 28
 & $(1^2,2,3,5,8)$
 & 10
 & $ \frac{1}{2}(1,1,1,1) \ \ \frac{1}{3}(1,2,2,2) \ \ \frac{1}{9}(1,1,2,3,4) \ \ \frac{1}{8}(1,1,3,5)$
 & terminal
 \\ \hline
 29
 & $(1^2,3^2,5,7)$
 & 10
 & $ \frac{1}{3}(1,2,1) \ \ \frac{1}{7}(1,1,3,5)$
 & terminal
 \\ \hline
 30
 & $(1^2,3,5^3)$
 & 10
 & $ \frac{1}{3}(1,2,2,2) \ \ \frac{1}{5}(1,1,3)$
 & canonical
 \\ \hline
 31
 & $(1,2^2,3,5,7)$
 & 10
 & $ \frac{1}{2}(1,1,1,1) \ \ \frac{1}{3}(2,2,2,2)\ \ \frac{1}{7}(1,2,2,5)$
 & terminal
 \\ \hline
 32
 & $(1,2^2,5^3)$
 & 10
 & $ \frac{1}{2}(1,1,1,1) \ \ \frac{1}{5}(1,2,2)$
 &  klt 
 \\ \hline
 33
 & $(1,2,3^2,4,7)$
 & 10
 & $ \frac{1}{2}(1,1,1,1) \ \ \frac{1}{3}(2,1,1) \ \ \frac{1}{4}(1,3,3,3) \ \ \frac{1}{7}(1,2,3,4)$
 & terminal
 \\ \hline
 34
 & $(1,2,3,4,5^2)$
 & 10
 & $ \frac{1}{2}(1,1,1,1) \ \ \frac{1}{3}(2,1,2,2) \ \ \frac{1}{4}(1,3,1,1) \ \ \frac{1}{5}(1,2,3,4)$
 & terminal
 \\ \hline
 35
 & $(1^2,2,3,5,10)$
 & 11
 & $ \frac{1}{2}(1,1,1) \ \ \frac{1}{3}(1,1,2,1) \ \ \frac{1}{5}(1,2,3) \ \ \frac{1}{10}(1,2,3,5)$
 & terminal
 \\ \hline
 36
 & $(1,2^2,3,5,9)$
 & 11
 & $ \frac{1}{2}(1,1,1) \ \ \frac{1}{3}(2,2,2) \ \ \frac{1}{5}(2,2,3,4) \ \ \frac{1}{9}(1,2,3,5)$
 & klt
 \\ \hline
 37
 & $(1,2,3^2,5,8)$
 & 11
 & $ \frac{1}{2}(1,1,1) \ \ \frac{1}{3}(1,2,2) \ \ \frac{1}{5}(2,3,3,3) \ \ \frac{1}{8}(2,3,3,5)$
 & terminal
 \\ \hline
 38
 & $(1,2,3,4,5,7)$
 & 11
 & $ \frac{1}{2}(1,1,1) \ \ \frac{1}{3}(1,1,2,1) \ \ \frac{1}{4}(1,2,1,2) \ \ \frac{1}{5}(2,3,4,2) \ \ \frac{1}{7}(1,2,3,5)$
 & terminal
 \\ \hline
 39
 & $(1,2,3,5^2,6)$
 & 11
 & $ \frac{1}{2}(1,1,1) \ \ \frac{1}{3}(1,2,2) \ \ \frac{1}{5}(2,3,1) \ \ \frac{1}{6}(2,3,5,5)$
 & terminal
 \\ \hline
 40
 & $(1^3,4,6,11)$
 & 12
 & $ \frac{1}{2}(1,1,1,1) \ \ \frac{1}{11}(1,1,4,6)$
 & terminal
 \\ \hline
 41
 & $(1^2,2,3,6,11)$
 & 12
 & $ \frac{1}{2}(1,1,1,1) \ \ \frac{1}{3}(1,1,2,2) \ \ \frac{1}{11}(1,2,3,6)$
 & terminal
 \\ \hline
 42
 & $(1^2,2,4,5,11)$
 & 12
 & $ \frac{1}{2}(1,1,1,1) \ \ \frac{1}{5}(1,1,4,1) \ \ \frac{1}{11}(1,2,4,5)$
 & terminal
 \\ \hline
 43
 & $(1^2,3,4^2,11)$
 & 12
 & $ \frac{1}{4}(1,1,3,3) \ \ \frac{1}{11}(1,3,4,4)$
 & terminal
 \\ \hline
 44
 & $(1^2,3,4,6,9)$
 & 12
 & $ \frac{1}{3}(1,1,1) \ \ \frac{1}{2}(1,1,1,1) \ \ \frac{1}{9}(1,1,4,6)$
 & klt
 \\ \hline
 45
 & $(1,2^2,3,5,11)$
 & 12
 & $\frac{1}{2}(1,1,1,1) \ \ \frac{1}{5}(1,2,3,1) \ \ \frac{1}{11}(2,2,3,5)$
 & terminal
 \\ \hline
 46
 & $(1,2,3^2,4,11)$
 & 12
 & $\frac{1}{2}(1,1,1,1) \ \ \frac{1}{3}(1,2,1,2) \ \ \frac{1}{11}(2,3,3,4)$
 & terminal
 \\ \hline
 47
 & $(1,2,3,4,5,9)$
 & 12
 & $\frac{1}{2}(1,1,1,1) \ \ \frac{1}{3}(1,2,1,2) \ \ \frac{1}{5}(1,3,4,4) \ \ \frac{1}{9}(1,2,4,5)$
 & terminal
 \\ \hline
 48
 & $(1,2,3,5,6,7)$
 & 12
 & $\frac{1}{2}(1,1,1,1) \ \ \frac{1}{3}(1,2,2,1) \ \ \frac{1}{5}(1,3,1,2) \ \ \frac{1}{7}(1,2,3,6)$
 & terminal
 \\ \hline
 49
 & $(1,2,4,5^2,7)$
 & 12
 & $\frac{1}{2}(1,1,1,1) \ \ \frac{1}{5}(1,4,2) \ \ \frac{1}{7}(1,2,4,5)$
 &  terminal
 \\ \hline
 50
 & $(1,3^2,4^2,9)$
 & 12
 & $\frac{1}{3}(1,1,1) \ \ \frac{1}{4}(1,3,3,1) \ \ \frac{1}{9}(1,3,4,4)$
 & klt
 \\  \hline
 51
 & $(2^2,3^2,5,9)$
 & 12
 & $\frac{1}{2}(1,1,1,1) \ \ \frac{1}{3}(2,2,2) \ \ \frac{1}{5}(2,3,3,4) \ \ \frac{1}{9}(2,2,3,5)$
 & klt
 \\ \hline
 52
 & $(2^2,3,5^2,7)$
 & 12
 & $\frac{1}{2}(1,1,1,1) \ \ \frac{1}{5}(2,3,2) \ \ \frac{1}{7}(2,2,3,5)$
 & terminal
 \\ \hline
 53
 & $(2,3^2,4,5,7)$
 & 12
 & $\frac{1}{2}(1,1,1,1) \ \ \frac{1}{3}(2,4,2,1) \ \
 \frac{1}{5}(3,3,4,2) \ \ \frac{1}{7}(2,3,3,4)$
 & terminal
 \\ \hline
 54
 & $(1^2,3,4,5,12)$
 & 13
 & $\frac{1}{3}(1,1,2) \ \ \frac{1}{4}(1,3,1) \ \
 \frac{1}{5}(1,1,4,2) \ \ \frac{1}{12}(1,3,4,5)$
 & terminal
 \\ \hline
 55
 & $(1,2,3,4,5,11)$
 & 13
 & $\frac{1}{2}(1,1,1) \ \ \frac{1}{3}(2,1,2,2) \ \
 \frac{1}{4}(2,3,1,3) \ \ \frac{1}{5}(1,2,4,1) \ \ \frac{1}{11}(1,3,4,5)$
 &  terminal
 \\ \hline
 56
 & $(1,3^2,4,5,10)$
 & 13
 & $\frac{1}{3}(1,2,1) \ \ \frac{1}{4}(3,3,1,2) \ \
 \frac{1}{5}(1,3,4) \ \ \frac{1}{10}(1,3,4,5)$
 & terminal
 \\ \hline
 57
 & $(1,3,4^2,5,9)$
 & 13
 & $\frac{1}{3}(1,1,2) \ \ \frac{1}{4}(3,1,1) \ \
 \frac{1}{5}(1,4,4,4) \ \ \frac{1}{9}(1,3,4,5)$
 &  terminal
 \\ \hline
 58
 & $(1,3,4,5^2,8)$
 & 13
 & $\frac{1}{3}(1,2,2,2) \ \ \frac{1}{4}(3,1,1) \ \
 \frac{1}{5}(1,4,3) \ \ \frac{1}{8}(1,3,4,5)$
 & terminal
 \\ \hline
 59 
 & $(1,3,4,5,6,7)$
 & 13
 & $\frac{1}{3}(1,2,1) \ \ \frac{1}{4}(3,1,2,3) \ \
 \frac{1}{5}(1,4,1,2) \ \ \frac{1}{6}(3,4,5,1) \ \ \frac{1}{7}(1,3,4,5)$
 & terminal
 \\ \hline
 60
 & $(1^2,2,4,7,13)$
 & 14
 & $\frac{1}{2}(1,1,1,1) \ \ \frac{1}{4}(1,1,3,1) \ \
 \frac{1}{13}(1,2,4,7)$
 & terminal
 \\ \hline
 61
 & $(1,2^2,3,7,13)$
 & 14
 & $\frac{1}{2}(1,1,1,1) \ \ \frac{1}{3}(1,2,1,1) \ \
 \frac{1}{13}(2,2,3,7)$
 & terminal
 \\ \hline
 62
 & $(1,2,3,4,5,13)$
 & 14
 & $\frac{1}{2}(1,1,1,1) \ \ \frac{1}{3}(2,1,2,1) \ \
 \frac{1}{4}(1,3,1,1) \ \ \frac{1}{5}(1,2,3,3) \ \ \frac{1}{13}(2,3,4,5)$ 
 & terminal
 \\ \hline
 63
 & $(1,2,3,4,7,11)$
 & 14
 &  $\frac{1}{2}(1,1,1,1) \ \ \frac{1}{3}(1,1,1,2) \ \
 \frac{1}{4}(1,3,3,3)  \ \ \frac{1}{11}(1,2,4,7)$ 
 & terminal
 \\ \hline
 64
 & $(1,2,4,5,7,9)$
 & 14
 & $\frac{1}{2}(1,1,1,1) \ \ \frac{1}{4}(1,1,3,1) \ \
 \frac{1}{5}(1,2,2,4)  \ \ \frac{1}{9}(1,2,4,7)$
 & terminal
 \\ \hline
 65
 & $(1,2,4,7^3)$
 & 14
 & $\frac{1}{2}(1,1,1,1) \ \ \frac{1}{4}(1,3,3,3) \ \
 \frac{1}{7}(1,2,4)$
 & klt
 \\ \hline
 66
 & $(2^2,3^2,7,11)$
 & 14
 & $\frac{1}{2}(1,1,1,1) \ \ \frac{1}{3}(2,1,2) \ \
 \frac{1}{11}(2,2,3,7)$
 & terminal
 \\ \hline
 67
 & $(2^2,3,7^3)$
 & 14
 & $\frac{1}{2}(1,1,1,1) \ \ \frac{1}{3}(2,1,1,1) \ \
 \frac{1}{7}(2,2,3)$
 & klt
 \\ \hline
 68
 & $(2,3^2,4,5,11)$
 & 14
 & $\frac{1}{2}(1,1,1,1) \ \ \frac{1}{3}(1,2,2) \ \
 \frac{1}{4}(3,3,1,3) \ \ \frac{1}{5}(2,3,3,1) \ \ \frac{1}{11}(2,3,4,5)$
 & terminal
 \\ \hline
 69
 & $(2,3,4,5^2,9)$
 & 14
 &  $\frac{1}{2}(1,1,1,1) \ \ \frac{1}{3}(1,2,2) \ \
 \frac{1}{4}(3,1,1,1) \ \ \frac{1}{5}(2,3,4) \ \ \frac{1}{9}(2,3,4,5)$
 & terminal
 \\ \hline
 70 
 & $(2,3,4,5,7^2)$
 & 14
 &  $\frac{1}{3}(1,2,1,1) \ \ \frac{1}{4}(3,1,3,3) \ \
 \frac{1}{5}(2,3,7,7) \ \ \frac{1}{7}(2,3,4,5) $
 & terminal
\\ \hline
 71 
 & $(1,1,2,5,7,14)$
 & 15
 & $\frac{1}{2}(1,1,1) \ \ \frac{1}{7}(1,2,5) \ \
 \frac{1}{14}(1,2,5,7) $
 & terminal
\\ \hline
 72 
 & $(1,1,3,4,7,14)$
 & 15
 & $\frac{1}{2}(1,1,1) \ \ \frac{1}{4}(1,1,3,2) \ \
 \frac{1}{7}(1,3,4) \ \ \frac{1}{14}(1,3,4,7) $
 & terminal
\\ \hline
 73 
 & $(1,1,3,5,6,14)$
 & 15
 & $\frac{1}{3}(1,1,2,2) \ \ \frac{1}{6}(1,1,5,2) \ \
 \frac{1}{14}(1,3,5,6)$
 & terminal
 \\ \hline 
74
 & $(1,2^2,5,7,13)$
 & 15
 &  $\frac{1}{2}(1,1,1) \ \ \frac{1}{7}(2,2,5,6) \ \
 \frac{1}{13}(1,2,5,7)$
 & terminal
 \\ \hline
 75
 & $(1,2,3,4,7,13)$
 & 15
 &  $\frac{1}{2}(1,1,1) \ \ \frac{1}{4}(1,2,3,1) \ \
 \frac{1}{7}(2,3,4,6) \ \ \frac{1}{13}(1,3,4,7)$
 & terminal
 \\ \hline
 76
 & $(1,2,3,5^2,14)$
 & 15
 & $\frac{1}{2}(1,1,1) \ \ \frac{1}{5}(1,1,3,4) \ \
 \frac{1}{14}(2,3,5,5)$
 & terminal
 \\ \hline
 77 
 & $(1,2,3,5,6,13)$
 & 15
 & $\frac{1}{2}(1,1,1) \ \ \frac{1}{3}(1,2,2,1) \ \
 \frac{1}{6}(1,2,5,1) \ \ \frac{1}{13}(1,3,5,6)$
 & terminal
 \\ \hline
 78
 & $(1,2,3,5,7,12)$
 & 15
 & $\frac{1}{2}(1,1,1) \ \ \frac{1}{3}(1,2,2,1) \ \
 \frac{1}{7}(2,3,5,5) \ \ \frac{1}{12}(1,2,5,7)$
 & terminal
 \\ \hline
 79
 & $(1,2,5^2,7,10)$
 & 15
 & $\frac{1}{2}(1,1,1) \ \ \frac{1}{5}(1,2,2) \ \
 \frac{1}{7}(2,5,5,3) \ \ \frac{1}{10}(1,2,5,7)$
 & klt
 \\ \hline
 80
 & $(1,2,5,7^2,8)$
 & 15
 &  $\frac{1}{2}(1,1,1) \ \ \frac{1}{7}(2,5,1) \ \ \frac{1}{8}(1,2,5,7)$
 & terminal
 \\ \hline
 81
 & $(1,3^2,4,5,14)$
 & 15
 & $\frac{1}{3}(1,1,2,2) \ \ \frac{1}{2}(1,1,1) \ \ \frac{1}{4}(1,3,1,2) \ \ \frac{1}{14}(3,3,4,5)$
 & terminal
 \\ \hline
 82
 & $(1,3^2,4,7,12)$
 & 15
 & $\frac{1}{3}(1,1,1) \ \ \frac{1}{4}(1,3,3) \ \ \frac{1}{7}(3,3,4,5) \ \ \frac{1}{12}(1,3,4,7)$
 & canonical
 \\ \hline
 83
 & $(1,3,4^2,7,11)$
 & 15
 & $\frac{1}{4}(1,3,3) \ \ \frac{1}{7}(3,4,4,4) \ \ \frac{1}{11}(1,3,4,7)$
 & terminal
 \\ \hline
 84
 & $(1,3,4,5,6,11)$
 & 15
 & $\frac{1}{3}(1,1,2,2) \ \ \frac{1}{4}(1,1,2,3) \ \ \frac{1}{6}(1,4,5,5) \ \ \frac{1}{11}(1,3,5,6)$
 & terminal
 \\ \hline
 85
 & $(1,3,4,5,7,10)$
 & 15
 & $\frac{1}{2}(1,1,1) \ \ \frac{1}{4}(1,1,3,2) \ \ \frac{1}{5}(1,3,4,2) \ \ \frac{1}{7}(3,4,5,3) \ \ \frac{1}{10}(1,3,4,7)$
 & terminal
 \\ \hline
 86
 & $(1,3,4,6,7,9)$
 & 15
 & $\frac{1}{3}(1,1,1) \ \ \frac{1}{2}(1,1,1) \ \ \frac{1}{4}(1,2,3,1) \ \ \frac{1}{6}(1,4,1,3) \ \ \frac{1}{7}(3,4,6,2) \ \ \frac{1}{9}(1,3,4,7)$
 & klt
 \\ \hline
 87
 & $(1,3,4,7^2,8)$
 & 15
 & $\frac{1}{4}(1,3,3) \ \ \frac{1}{7}(3,4,1) \ \ \frac{1}{8}(1,3,4,7) $
 & terminal
 \\ \hline
 88
 & $(1,3,5^2,6,10)$
 & 15
 & $\frac{1}{5}(1,3,1) \ \ \frac{1}{3}(1,2,2,1) \ \ \frac{1}{6}(1,5,5,4) \ \ \frac{1}{10}(1,3,5,6) $
 & klt
 \\ \hline
 89
 & $(1,3,5,6,7,8)$
 & 15
 & $\frac{1}{3}(1,2,1,2) \ \ \frac{1}{6}(1,5,1,2) \ \ \frac{1}{7}(3,5,6,1) \ \ \frac{1}{8}(1,3,5,6) $
 & terminal
 \\ \hline
 90
 & $(3^2,4^2,5,11)$
 & 15
 & $\frac{1}{3}(1,1,2,2) \ \ \frac{1}{4}(3,1,3) \ \ \frac{1}{11}(3,3,4,5)$
 & terminal
 \\ \hline
 91
 & $(3^2,4,5^2,10)$
 & 15
 & $\frac{1}{3}(1,2,2,1) \ \ \frac{1}{2}(1,1,1) \ \ \frac{1}{4}(3,1,1,2) \ \ \frac{1}{5}(3,3,4) \ \ \frac{1}{10}(3,3,4,5)$
 & canonical
 \\ \hline
 92
 & $(1^2,2,5,8,15)$
 & 16
 & $\frac{1}{2}(1,1,1,1) \ \ \frac{1}{5}(1,2,3) \ \ \frac{1}{15}(1,2,5,8)$
 & terminal
 \\  \hline
 93
 & $(1^2,3,4,8,15)$
 & 16
 & $\frac{1}{3}(1,1,2) \ \ \frac{1}{4}(1,1,3,3) \ \ \frac{1}{15}(1,3,4,8)$
 & terminal
 \\  \hline
 94
 & $(1^2,4,5,6,15)$
 & 16
 & $\frac{1}{4}(1,1,1,3) \ \ \frac{1}{5}(1,4,1) \ \ \frac{1}{6}(1,1,5,3) \ \ \frac{1}{15}(1,4,5,6)$
 & terminal
 \\  \hline
 95
 & $(1,2,3,4,7,15)$
 & 16
 & $\frac{1}{2}(1,1,1,1) \ \ \frac{1}{3}(2,1,1) \ \ \frac{1}{7}(1,3,4,1) \ \ \frac{1}{15}(2,3,4,7)$
 & terminal
 \\  \hline
 96 
 & $(1,2,3,5,8,13)$
 & 16
 &  $\frac{1}{2}(1,1,1,1) \ \ \frac{1}{3}(2,2,2,1) \ \ \frac{1}{5}(2,3,3,3) \ \ \frac{1}{13}(1,2,5,8)$
 & terminal
 \\  \hline
 97
 & $(1,2,5^2,8,11)$
 & 16
 & $\frac{1}{2}(1,1,1,1) \ \ \frac{1}{5}(2,3,1) \ \ \frac{1}{11}(1,2,5,8) $
 & terminal
 \\  \hline
 98
 & $(1,2,5,7,8,9)$
 & 16
 & $\frac{1}{2}(1,1,1,1) \ \ \frac{1}{5}(2,2,3,4) \ \ \frac{1}{7}(1,5,1,2) \ \ \frac{1}{9}(1,2,5,8)$
 & terminal
 \\  \hline
 99
 & $(1,3^2,4,8,13)$
 & 16
 &  $\frac{1}{3}(1,2,1) \ \ \frac{1}{4}(1,3,3,1) \ \ \frac{1}{13}(1,3,4,8)$
 & terminal
 \\  \hline
 100
 & $(1,3,4,5,6,13)$
 & 16
 &  $\frac{1}{3}(1,2,1) \ \ \frac{1}{5}(3,4,1,3) \ \ \frac{1}{6}(1,3,5,1) \ \ \frac{1}{13}(1,4,5,6)$
 & terminal
 \\  \hline
 101
 & $(1,3,4,5,8,11)$
 & 16
 & $\frac{1}{3}(1,2,2,2) \ \ \frac{1}{4}(1,3,1,3) \ \ \frac{1}{5}(3,4,3,1) \ \ \frac{1}{11}(1,3,4,8)$
 & terminal
 \\  \hline
 102
 & $(1,4,5^2,6,11)$
 & 16
 & $\frac{1}{2}(1,1,1,1) \ \ \frac{1}{4}(4,1,1) \ \ \frac{1}{6}(1,5,5,5) \ \ \frac{1}{11}(1,4,5,6)$
 & terminal
 \\  \hline
 103
 & $(1,2,3,5,7,16)$
 & 17
 & $\frac{1}{2}(1,1,1) \ \ \frac{1}{3}(1,2,1,1) \ \ \frac{1}{5}(1,3,2,1) \ \ \frac{1}{7}(1,2,5,2) \ \ \frac{1}{16}(2,3,5,7)$
 & terminal
 \\  \hline
 104
 & $(1^2,2,6,9,17)$
 & 18
 & $\frac{1}{2}(1,1,1,1) \ \ \frac{1}{3}(1,1,2,2) \ \ \frac{1}{17}(1,2,6,9) $
 & terminal
 \\  \hline
 105
 & $(1^2,3,5,9,17)$
 & 18
 & $\frac{1}{3}(1,1,2,2) \ \ \frac{1}{5}(1,1,4,2) \ \ \frac{1}{17}(1,3,5,9) $
 & terminal
 \\  \hline
 106
 & $(1,2,3,4,9,17)$
 & 18
 & $\frac{1}{2}(1,1,1,1) \ \ \frac{1}{3}(1,2,1,2) \ \ \frac{1}{4}(1,3,1,1) \ \ \frac{1}{17}(2,3,4,9) $
 & terminal
 \\  \hline
 107
 & $(1,2,3,5,8,17)$
 & 18
 & $\frac{1}{2}(1,1,1,1) \ \ \frac{1}{5}(1,2,3,2) \ \ \frac{1}{8}(1,3,5,1) \ \ \frac{1}{17}(2,3,5,8) $
 & terminal
 \\  \hline
 108
 & $(1,2,3,5,9,16)$
 & 18
 & $\frac{1}{2}(1,1,1,1) \ \ \frac{1}{3}(1,2,2,1) \ \ \frac{1}{5}(1,2,4,1) \ \ \frac{1}{16}(1,3,5,9) $
 & terminal
 \\  \hline
 109
 & $(1,3,5^2,9,13)$
 & 18
 & $\frac{1}{3}(1,2,2,1) \ \ \frac{1}{5}(1,4,3) \ \ \frac{1}{13}(1,3,5,9) $
 & terminal
 \\  \hline
 110
 & $(2,3^2,5,8,15)$
 & 18
 & $\frac{1}{2}(1,1,1,1) \ \ \frac{1}{3}(2,2,2) \ \ \frac{1}{5}(2,3,3) \ \ \frac{1}{8}(3,3,5,7) \ \ \frac{1}{15}(2,3,5,8) $
 & klt
 \\  \hline
 111
 & $(2,3,4,5,9,12)$
 & 18
 & $\frac{1}{2}(1,1,1) \ \ \frac{1}{3}(2,1,2) \ \ \frac{1}{4}(3,1,1) \ \ \frac{1}{5}(2,4,4,2) \ \ \frac{1}{12}(2,3,4,5,9) $
 & terminal
 \\  \hline
 112
 & $(2,3,4,7,9,11)$
 & 18
 & $\frac{1}{2}(1,1,1,1) \ \ \frac{1}{3}(2,1,1,2) \ \ \frac{1}{4}(3,3,1,3) \ \ \frac{1}{7}(2,3,2,4) \ \ \frac{1}{11}(2,3,4,9) $
 & terminal
 \\  \hline
 113
 & $(2,3,5^2,8,13)$
 & 18
 &  $\frac{1}{2}(1,1,1,1) \ \ \frac{1}{5}(2,3,3) \ \ \frac{1}{8}(3,5,5,5) \ \ \frac{1}{13}(2,3,5, 8)  $
 & terminal
 \\  \hline
 114
 & $(2,3,5,8,9^2)$
 & 18
 &  $\frac{1}{2}(1,1,1,1) \ \ \frac{1}{3}(2,2,2) \ \ \frac{1}{5}(2,3,4,4) \ \ \frac{1}{8}(3,5,1,1) \ \ \frac{1}{9}(2,3,5,8) $
 & klt
 \\  \hline
 115
 & $(1,3,4,5,7,18)$
 &  19
 & $\frac{1}{3}(1,2,1) \ \ \frac{1}{2}(1,1,1) \ \ \frac{1}{4}(1,1,3,2) \ \ \frac{1}{5}(1,3,2,3) \ \ \frac{1}{7}(1,3,4,4) \ \ \frac{1}{18}(3,4,5,7) $
 & terminal
 \\  \hline
 116
 & $(1^2,4,5,10,19)$
 & 20
 & $\frac{1}{2}(1,1,1,1) \ \ \frac{1}{5}(1,1,4,4) \ \ \frac{1}{19}(1,4,5,10) $
 & terminal
 \\  \hline
 117
 & $(1,2,3,5,10,19)$
 & 20
 & $\frac{1}{2}(1,1,1,1) \ \ \frac{1}{3}(1,2,1,1) \ \ \frac{1}{5}(1,2,3,4) \ \ \frac{1}{19}(2,3,5,10) $
 & terminal
 \\  \hline
 118
 & $(1,2,4,5,9,19)$
 & 20
 & $\frac{1}{2}(1,1,1,1) \ \ \frac{1}{9}(1,4,5,1) \ \ \frac{1}{19}(2,4,5,9)$
 & terminal
 \\  \hline
 119
 & $(1,2,5,6,7,19)$
 & 20
 & $\frac{1}{2}(1,1,1,1) \ \ \frac{1}{6}(1,5,1,2) \ \ \frac{1}{7}(1,2,5,5) \ \ \frac{1}{19}(2,5,6,7)$
 & terminal
 \\  \hline
 120
 & $(2,3^2,5,10,17)$
 & 20
 & $\frac{1}{2}(1,1,1,1) \ \ \frac{1}{3}(2,1,2) \ \ \frac{1}{5}(2,3,3,2) \ \ \frac{1}{17}(2,3,5,10)$
 & terminal
 \\  \hline
 121
 & $(2,3,4,5,9,17)$
 & 20
 & $\frac{1}{2}(1,1,1,1) \ \ \frac{1}{3}(1,2,2) \ \ \frac{1}{9}(3,4,5,8) \ \ \frac{1}{17}(2,4,5,9)$
 & terminal
 \\  \hline
 122
 & $(2,3,5,6,7,17)$
 & 20
 & $\frac{1}{2}(1,1,1,1) \ \ \frac{1}{3}(2,1,2) \ \ \frac{1}{6}(3,5,1,5) \ \ \frac{1}{7}(2,3,5,3) \ \ \frac{1}{17}(2,5,6,7)$
 & terminal
 \\  \hline
 123
 & $(2,3,5,9,10,11)$
 & 20
 & $\frac{1}{2}(1,1,1,1) \ \ \frac{1}{3}(2,1,2) \ \ \frac{1}{9}(3,5,1,2) \ \ \frac{1}{11}(2,3,5,10)$
 & terminal
 \\  \hline
 124
 & $(2,4,5^2,9,15)$
 & 20
 & $\frac{1}{2}(1,1,1,1) \ \ \frac{1}{5}(2,4,4) \ \ \frac{1}{9}(4,5,5,6) \ \ \frac{1}{15}(2,4,5,9)$
 & klt
 \\  \hline
 125
 & $(2,4,5,9^2,11)$
 & 20
 & $\frac{1}{2}(1,1,1,1) \ \ \frac{1}{9}(4,5,2) \ \ \frac{1}{11}(2,4,5,9)$
 & terminal
 \\  \hline
 126
 & $(2,5^2,6,7,15)$
 & 20
 & $\frac{1}{2}(1,1,1,1) \ \ \frac{1}{5}(2,1,2) \ \ \frac{1}{6}(5,5,1,3) \ \ \frac{1}{7}(2,5,5,1) \ \ \frac{1}{15}(2,5,6,7)$
 & klt
 \\  \hline
 127
 & $(2,5,6,7^2,13)$
 & 20
 & $\frac{1}{6}(5,1,1,1) \ \ \frac{1}{7}(2,5,6) \ \ \frac{1}{13}(2,5,6,7)$
 & terminal
 \\  \hline
 128
 & $(2,5,6,7,9,11)$
 & 20
 &  $\frac{1}{2}(1,1,1,1) \ \ \frac{1}{3}(2,1,2) \ \ \frac{1}{6}(5,1,3,5) \ \ \frac{1}{7}(2,5,2,4) \ \ \frac{1}{9}(5,6,7,2) \ \ \frac{1}{11}(2,5,6,7)$
 & terminal
 \\  \hline
 129
 & $(1^2,3,7,10,20)$
 & 21
 & $\frac{1}{10}(1,3,7) \ \ \frac{1}{20}(1,3,7,10)$
 & terminal
 \\  \hline
 130
 & $(1^2,5,7,8,20)$
 & 21
 & $\frac{1}{5}(1,2,3) \ \ \frac{1}{8}(1,1,7,4) \ \ \frac{1}{20}(1,5,7,8)$
 & terminal
 \\  \hline
 131
 & $(1,2,3,7,9,10)$
 & 21
 & $\frac{1}{2}(1,1,1) \ \ \frac{1}{3}(1,2,1,1) \ \ \frac{1}{9}(1,2,7,1) \ \ \frac{1}{10}(2,3,7,9)$
 & terminal
 \\  \hline
 132
 & $(1,2,3,7,10,19)$
 & 21
 & $\frac{1}{2}(1,1,1) \ \ \frac{1}{10}(2,3,7,9) \ \ \frac{1}{19}(1,3,7,10)$
 & terminal
 \\  \hline
 133
 & $(1,2,5,7,8,19)$
 & 21
 & $\frac{1}{2}(1,1,1) \ \ \frac{1}{5}(2,2,3,4) \ \ \frac{1}{8}(1,2,7,3) \ \ \frac{1}{19}(1,5,7,8)$
 & terminal
 \\  \hline
 134
 & $(1,3^2,7,10,18)$
 & 21
 & $\frac{1}{3}(1,1,1) \ \ \frac{1}{10}(3,3,7,8) \ \ \frac{1}{18}(1,3,7,10)$
 & canonical
 \\  \hline
 135
 & $(1,3,4,7,10,17)$
 & 21
 & $\frac{1}{4}(3,3,2,1) \ \ \frac{1}{10}(3,4,7,7) \ \ \frac{1}{17}(1,3,7,10)$
 & terminal
 \\  \hline
 136
 & $(1,3,5,6,7,20)$
 & 21
 & $\frac{1}{5}(3,1,2) \ \ \frac{1}{6}(1,5,1,2) \ \ \frac{1}{20}(3,5,6,7)$
 & terminal
 \\  \hline
 137
 & $(1,3,5,7,8,18)$
 & 21
 & $\frac{1}{3}(1,2,1,2) \ \ \frac{1}{5}(3,2,3,3) \ \ \frac{1}{8}(1,3,7,2) \ \ \frac{1}{18}(1,5,7,8)$
 & terminal
 \\  \hline
 138
 & $(1,3,5,7,10,16)$
 & 21
 & $\frac{1}{5}(3,2,1) \ \ \frac{1}{10}(3,5,7,6) \ \ \frac{1}{16}(1,3,7,10) $
 & terminal
 \\  \hline
 139
 & $(1,3,6,7,10,15)$
 & 21
 & $ \frac{1}{3}(1,1,1) \ \ \frac{1}{6}(1,1,4,3) \ \ \frac{1}{10}(3,6,7,5) \ \ \frac{1}{15}(1,3,7,10) $
 & klt
 \\  \hline
 140
 & $(1,3,7^2,10,14)$
 & 21
 & $ \frac{1}{7}(1,3,3) \ \ \frac{1}{10}(3,7,7,4) \ \ \frac{1}{14}(1,3,7,10) $
 & klt
 \\  \hline
 141
 & $(1,3,7,9,10,12)$
 & 21
 & $ \frac{1}{3}(1,1,1) \ \ \frac{1}{9}(1,7,1,3) \ \ \frac{1}{10}(3,7,9,2) \ \ \frac{1}{12}(1,3,7,10) $
 & canonical
 \\  \hline
 142
 & $(1,3,7,10^2,11)$
 & 21
 & $ \frac{1}{10}(3,7,1) \ \ \frac{1}{11}(1,3,7,10) $
 & terminal
 \\  \hline
 143
 & $(1,4,5,7,8,17)$
 & 21
 & $ \frac{1}{4}(1,3,1) \ \ \frac{1}{5}(4,2,3,2) \ \ \frac{1}{8}(1,4,7,1) \ \ \frac{1}{17} (1,5,7,8) $
 & terminal
 \\  \hline
 144
 & $(1,5^2,7,8,16)$
 & 21
 &  $ \frac{1}{5}(2,3,1) \ \ \frac{1}{5}(4,2,3,2) \ \ \frac{1}{8}(1,4,7,1) \ \ \frac{1}{17} (1,5,7,8) $
 & terminal
 \\  \hline
 145
 & $(1,5,7^2,8,14)$
 & 21
 & $ \frac{1}{5}(2,2,3,4) \ \ \frac{1}{7}(1,5,1) \ \ \frac{1}{8}(1,7,7,6) \ \ \frac{1}{14} (1,5,7,8) $
 & klt
 \\  \hline
 146
 & $(1,5,7,8^2,13)$
 & 21
 & $ \frac{1}{5}(2,3,3,3) \ \ \frac{1}{8}(1,7,5) \ \ \frac{1}{13}(1,5,7,8) $
 & terminal
 \\  \hline
 147
 & $(1,5,7,8,10,11)$
 & 21
 & $ \frac{1}{5}(2,3,1) \ \ \frac{1}{8}(1,7,2,3) \ \ \frac{1}{10}(5,7,8,1) \ \ \frac{1}{11}(1,5,7,8) $
 & terminal
 \\  \hline
 148
 & $(1^2,3,7,11,21)$
 & 22
 &  $ \frac{1}{3}(1,1,2) \ \ \frac{1}{7}(1,3,4) \ \ \frac{1}{21}(1,3,7,11)  $
 & terminal
 \\  \hline
 149
 & $(1,2,3,7,11,20)$
 &  22
 & $ \frac{1}{2}(1,1,1,1) \ \ \frac{1}{3}(2,1,2,2) \ \ \frac{1}{7}(2,3,4,6) \ \ \frac{1}{20}(1,3,7,11) $
 & terminal
 \\  \hline
 150
 & $(1,2,4,5,11,21)$
 & 22
 & $ \frac{1}{2}(1,1,1,1) \ \ \frac{1}{4}(1,1,3,1) \ \ \frac{1}{5}(1,4,1,1) \ \ \frac{1}{21}(2,4,5,11) $
 & terminal
 \\  \hline
 151
 & $(1,3^2,7,11,19)$
 & 22
 &  $ \frac{1}{3}(1,2,1) \ \ \frac{1}{7}(3,3,4,5) \ \ \frac{1}{19}(1,3,7,11) $
 & terminal
 \\  \hline
 152
 & $(1,3,7^2,11,15)$
 & 22
 &  $ \frac{1}{3}(1,1,2) \ \ \frac{1}{7}(3,4,1) \ \ \frac{1}{15}(1,3,7,11) $
 & terminal
 \\  \hline
 153
 & $(1,3,7,11^3)$
 & 22
 & $ \frac{1}{11}(1,3,7) \ \ \frac{1}{3}(1,2,2,2) \ \ \frac{1}{7}(3,4,4,4) $
 & klt
 \\  \hline
 154
 & $(2,4,5^2,11,17)$
 & 22
 & $ \frac{1}{2}(1,1,1,1) \ \ \frac{1}{4}(1,1,3,1) \ \ \frac{1}{5}(4,1,2) \ \ \frac{1}{17}(2,4,5,11) $
 & terminal
 \\  \hline
 155
 & $(2,4,5,9,11,13)$
 & 22
 & $ \frac{1}{2}(1,1,1,1) \ \ \frac{1}{4}(1,1,3,1) \ \ \frac{1}{5}(4,4,1,3) \ \ \frac{1}{9}(2,5,2,4) \ \ \frac{1}{13}(2,4,5,11) $
 & terminal
 \\  \hline
 156
 & $(2,4,5,11^3)$
 & 22
 & $ \frac{1}{2}(1,1,1,1) \ \ \frac{1}{4}(1,3,3,3) \ \ \frac{1}{5}(4,1,1,1) \ \ \frac{1}{11}(2,4,5) $
 & klt
 \\  \hline
 157
 & $(1^2,3,8,12,23)$
 &  24
 & $ \frac{1}{3}(1,1,2,2) \ \ \frac{1}{4}(1,1,3,3) \ \ \frac{1}{23}(1,3,8,12) $
 & terminal
 \\  \hline
 158
 & $(1^2,6,8,9,23)$
 & 24
 & $ \frac{1}{3}(1,1,2,2) \ \ \frac{1}{2}(1,1,1,1) \ \ \frac{1}{9}(1,1,8,5) \ \ \frac{1}{23}(1,6,8,9) $
 & terminal
 \\  \hline
 159
 & $(1,2,3,7,12,23)$
 & 24
 & $ \frac{1}{2}(1,1,1,1) \ \ \frac{1}{7}(1,2,5,2) \ \ \frac{1}{23}(2,3,7,12) $
 & terminal
 \\  \hline
 160
 & $(1,2,3,8,11,23)$
 & 24
 & $ \frac{1}{2}(1,1,1,1) \ \ \frac{1}{11}(1,3,8,7) \ \ \frac{1}{23}(2,3,8,11) $
 & terminal
 \\  \hline
 161
 & $(1,3,4,5,12,23)$
 & 24
 & $ \frac{1}{3}(1,1,2,2) \ \ \frac{1}{4}(1,3,1,3) \ \ \frac{1}{5}(1,3,2,3) \ \ \frac{1}{23}(3,4,5,12) $
 & terminal
 \\  \hline
 162
 & $(1,3,4,7,10,23)$
 & 24
 & $ \frac{1}{7}(1,4,3,2) \ \ \frac{1}{10}(1,3,7,3) \ \ \frac{1}{23}(3,4,7,10) $
 & terminal
 \\  \hline
 163
 & $(1,3,5,7,8,23)$
 & 24
 & $ \frac{1}{5}(1,3,2,3,3) \ \ \frac{1}{7}(1,5,1,2) \ \ \frac{1}{23}(3,5,7,8) $
 & canonical
 \\  \hline
 164
 & $(1,3,7,8,12,17)$
 & 24
 & $ \frac{1}{3}(1,1,2,2) \ \ \frac{1}{7}(1,1,5,3) \ \ \frac{1}{4}(1,3,3,1) \ \ \frac{1}{17}(1,3,8,12) $
 & terminal
 \\  \hline
 165
 & $(1,4,5,6,9,23)$
 & 24
 & $ \frac{1}{2}(1,1,1,1) \ \ \frac{1}{5}(1,1,4,3) \ \ \frac{1}{9}(1,4,5,5) \ \ \frac{1}{23}(4,5,6,9) $
 & terminal
 \\  \hline
 166
 & $(2,3^2,8,11,21)$
 & 24
 & $ \frac{1}{2}(1,1,1,1) \ \ \frac{1}{3}(2,2,2) \ \ \frac{1}{11}(3,3,8,10) \ \ \frac{1}{21}(2,3,8,11) $
 & klt
 \\  \hline
 167
 & $(2,3,7^2,12,17)$
 & 24
 & $ \frac{1}{2}(1,1,1,1) \ \ \frac{1}{3}(2,1,1,2) \ \ \frac{1}{7}(2,5,3) \ \ \frac{1}{17}(2,3,7,12) $
 & terminal
 \\  \hline
 168
 & $(2,3,7,8,11,17)$
 & 24
 &  $ \frac{1}{2}(1,1,1,1) \ \ \frac{1}{7}(2,1,4,3) \ \ \frac{1}{11}(3,7,8,6) \ \ \frac{1}{17}(2,3,8,11) $
 & terminal
 \\  \hline
 169
 & $(2,3,7,11,12,13)$
 & 24
 & $ \frac{1}{2}(1,1,1,1) \ \ \frac{1}{3}(2,1,2,1) \ \ \frac{1}{7}(2,4,5,6) \ \ \frac{1}{11}(3,7,1,2) \ \ \frac{1}{13}(2,3,7,12) $
 & terminal
 \\  \hline
 170
 & $(2,3,8,11^2,13)$
 & 24
 & $\frac{1}{2}(1,1,1,1) \ \  \frac{1}{11}(3,8,2) \ \ \frac{1}{13}(2,3,8,11)$
 & terminal
 \\  \hline
 171
 & $(3^2,4,7,10,21)$
 & 24
 & $\frac{1}{3}(1,1,1) \ \  \frac{1}{2}(1,1,1,1) \ \ \frac{1}{7}(3,4,3) \ \ \frac{1}{10}(3,3,7,1) \ \  \frac{1}{21}(3,4,7,10)$
 & canonical
 \\  \hline
 172
 & $(3,4,5^2,12,19)$
 & 24
 & $\frac{1}{4}(3,1,1,3) \ \  \frac{1}{3}(1,2,2,1) \ \ \frac{1}{5}(3,2,4) \ \ \frac{1}{19}(3,4,5,12)$
 & terminal
 \\  \hline
 173
 & $(3,4,5,7,10,19)$
 & 24
 & $\frac{1}{2}(1,1,1,1) \ \  \frac{1}{5}(3,2,4) \ \ \frac{1}{10}(3,5,7,9) \ \ \frac{1}{19}(3,4,7,10)$
 & terminal
 \\  \hline
 174
 & $(3,4,5,7,12,17)$
 & 24
 & $\frac{1}{4}(3,1,3,1) \ \  \frac{1}{3}(1,2,1,2) \ \ \frac{1}{5}(3,2,2,2) \ \ \frac{1}{17}(3,4,5,12) \ \  \frac{1}{7}(4,5,5,3)$
 & terminal
 \\  \hline
 175
 & $(3,4,7^2,10,17)$
 & 24
 & $\frac{1}{2}(1,1,1,1) \ \  \frac{1}{10}(3,7,7,7) \ \ \frac{1}{7}(4,3,3) \ \ \frac{1}{17}(3,4,7,10) $
 & terminal
 \\  \hline
 176
 & $(3,6,7^2,8,17)$
 & 24
 & $\frac{1}{2}(1,1,1,1) \ \  \frac{1}{7}(6,1,3) \ \ \frac{1}{3}(1,1,2,2) \ \ \frac{1}{17}(3,6,7,8)$
 & terminal
 \\  \hline
 177
 & $(4,5^2,6,9,19)$
 & 24
 & $\frac{1}{2}(1,1,1,1) \ \  \frac{1}{5}(1,4,4) \ \ \frac{1}{3}(1,2,2,1) \ \ \frac{1}{9}(4,5,5,1) \ \  \frac{1}{19}(4,5,6,9)$
 & terminal
 \\  \hline
 178
 & $(1,4,5,7,9,24)$
 & 25
 & $\frac{1}{4}(1,3,1) \ \  \frac{1}{7}(1,5,2,3) \ \ \frac{1}{3}(1,2,1) \ \ \frac{1}{24}(4,5,7,9)$
 & terminal
 \\  \hline
 179
 & $(1^2,5,7,13,25)$
 & 26
 & $\frac{1}{5}(1,2,3) \ \  \frac{1}{7}(1,1,6,4) \ \ \frac{1}{25}(1,5,7,13)$
 & terminal
 \\  \hline
 180
 & $(1,2,3,8,13,25)$
 & 26
 & $\frac{1}{2}(1,1,1,1) \ \  \frac{1}{3}(1,2,1,1) \ \ \frac{1}{8}(1,3,5,1) \ \ \frac{1}{25}(2,3,8,13)$
 & terminal
 \\  \hline
 181
 & $(1,2,5,6,13,25)$
 & 26
 & $\frac{1}{2}(1,1,1,1) \ \  \frac{1}{5}(2,1,3) \ \ \frac{1}{6}(1,5,1,1) \ \ \frac{1}{25}(2,5,6,13)$
 & terminal
 \\  \hline
 182
 & $(1,2,5,7,13,24)$
 & 26
 & $\frac{1}{2}(1,1,1,1) \ \  \frac{1}{5}(2,2,3,4) \ \ \frac{1}{7}(1,2,6,3) \ \ \frac{1}{24}(1,5,7,13)$
 & terminal
 \\  \hline
 183
 & $(1,5^2,7,13,21)$
 & 26
 & $\frac{1}{5}(2,3,1) \ \  \frac{1}{7}(1,5,6) \ \ \frac{1}{21}(1,5,7,13) $
 & terminal
 \\  \hline
 184
 & $(1,5,7^2,13,19)$
 & 26
 & $\frac{1}{5}(2,2,3,4) \ \  \frac{1}{7}(1,6,5) \ \ \frac{1}{19}(1,5,7,13)$
 & terminal
 \\  \hline
 185
 & $(1,5,7,13^3)$
 & 26
 & $\frac{1}{5}(2,3,3,3) \ \  \frac{1}{7}(1,6,6,6) \ \ \frac{1}{13}(1,5,7)$
 & klt
 \\  \hline
 186
 & $(2,3^2,8,13,23)$
 & 26
 & $\frac{1}{2}(1,1,1,1) \ \  \frac{1}{3}(2,1,2) \ \ \frac{1}{8}(3,3,5,7) \ \ \frac{1}{23}(2,3,8,13)$
 & terminal
 \\  \hline
 187
 & $(2,3,8,9,13,17)$
 & 26
 & $\frac{1}{2}(1,1,1,1) \ \  \frac{1}{3}(2,1,2) \ \ \frac{1}{8}(3,1,5,1) \ \ \frac{1}{9}(2,3,4,8) \ \ \frac{1}{17}(2,3,8,13)$
 & terminal
 \\  \hline
 188
 & $(2,3,8,13^3)$
 & 26
 & $\frac{1}{2}(1,1,1,1) \ \  \frac{1}{13}(2,3,8) \ \ \frac{1}{8}(3,5,5,5) \ \ \frac{1}{3}(2,1,1,1)$
 & klt
 \\  \hline
 189
 & $(1,2,5,9,11,26)$
 & 27
 & $\frac{1}{2}(1,1,1,1) \ \  \frac{1}{5}(1,4,1,1) \ \ \frac{1}{11}(1,2,9,4) \ \ \frac{1}{26}(2,5,9,11)$
 & terminal
 \\  \hline
 190
 & $(1^2,4,9,14,27)$
 & 28
 & $\frac{1}{2}(1,1,1,1) \ \  \frac{1}{9}(1,4,5) \ \ \frac{1}{27}(1,4,9,14)$
 & terminal
 \\  \hline
 191
 & $(1,3,4,7,14,27)$
 & 28
 & $\frac{1}{2}(1,1,1,1) \ \  \frac{1}{3}(1,1,2) \ \ \frac{1}{7}(1,3,4,6) \ \ \frac{1}{27}(3,4,7,14)$
 & terminal
 \\  \hline
 192
 & $(1,3,4,9,14,25)$
 & 28
 & $\frac{1}{3}(1,2,1) \ \  \frac{1}{2}(1,1,1,1) \ \ \frac{1}{9}(3,4,5,7) \ \ \frac{1}{25}(1,4,9,14) $
 & terminal
 \\  \hline
 193
 & $(1,4,6,7,11,27)$
 & 28
 & $\frac{1}{2}(1,1,1,1) \ \  \frac{1}{3}(1,1,2) \ \ \frac{1}{6}(1,1,5,3) \ \ \frac{1}{11}(1,4,7,5) \ \ \frac{1}{27}(4,6,7,11)$
 & terminal
 \\  \hline
 194
 & $(1,4,7,9,14,21)$
 & 28
 & $\frac{1}{2}(1,1,1,1) \ \  \frac{1}{3}(1,1,2) \ \ \frac{1}{9}(4,7,5,3) \ \ \frac{1}{7}(1,4,2) \ \ \frac{1}{21}(4,6,7,11)$
 & klt
 \\  \hline
 195
 & $(1,4,9^2,14,19)$
 & 28
 & $\frac{1}{2}(1,1,1,1) \ \  \frac{1}{9}(4,5,1) \ \ \frac{1}{19}(1,4,9,14)$
 & terminal
 \\  \hline
 196
 & $(4,6,7^2,11,21)$
 & 28
 & $\frac{1}{2}(1,1,1,1) \ \  \frac{1}{7}(4,6,4) \ \ \frac{1}{3}(4,7,11) \ \ \frac{1}{11}(4,7,7,10) \ \ \frac{1}{21}(4,6,7,11)$
 & klt
 \\  \hline
 197
 & $(4,6,7,11^2,17)$
 & 28
 & $\frac{1}{2}(1,1,1,1) \ \  \frac{1}{6}(1,5,5,5) \ \ \frac{1}{11}(4,7,6) \ \ \frac{1}{17}(4,6,7,11)$
 & terminal
 \\  \hline
 198
 & $(1^2,6,8,15,29)$
 & 30
 & $\frac{1}{2}(1,1,1,1) \ \  \frac{1}{8}(1,1,7,5) \ \ \frac{1}{3}(1,1,2,2) \ \ \frac{1}{29}(1,6,8,15)$ 
 & terminal
 \\  \hline
 199
 & $(1,2,3,10,15,29)$
 & 30
 & $\frac{1}{2}(1,1,1,1) \ \  \frac{1}{3}(1,2,1,2) \ \ \frac{1}{5}(1,2,3,4) \ \ \frac{1}{29}(2,3,10,15)$ 
 & terminal
 \\  \hline
 200
 & $(1,2,6,7,15,29)$
 & 30
 & $\frac{1}{2}(1,1,1,1) \ \  \frac{1}{3}(1,2,1,2) \ \ \frac{1}{7}(1,6,1,1) \ \ \frac{1}{29}(2,6,7,15)$ 
 & terminal
 \\  \hline
 201
 & $(1,5,6,8,11,29)$
 & 30
 & $\frac{1}{8}(1,5,3,5) \ \  \frac{1}{29}(5,6,8,11) \ \ \frac{1}{11}(1,6,4,8) \ \ \frac{1}{19}(1,6,8,15) \ \ \frac{1}{2}(1,1,1,1)$ 
 & terminal
 \\  \hline
 202
 & $(1,5,6,8,15,25)$
 & 30
 & $\frac{1}{5}(1,1,3) \ \  \frac{1}{2}(1,1,1,1) \ \ \frac{1}{8}(1,5,7,1) \ \ \frac{1}{25}(1,6,8,15) \ \ \frac{1}{3}(1,2,2,1)$ 
 & canonical
 \\  \hline
 203
 & $(1,6,8,11,15,19)$
 & 30
 & $\frac{1}{2}(1,1,1,1) \ \  \frac{1}{3}(1,2,2,1) \ \ \frac{1}{8}(1,3,7,3) \ \ \frac{1}{11}(1,6,4,8) \ \ \frac{1}{19}(1,6,8,15)$ 
 & terminal
 \\  \hline
 204
 & $(2,3,7,10,15,23)$
 & 30
 & $\frac{1}{2}(1,1,1,1) \ \  \frac{1}{3}(2,1,1,2) \ \ \frac{1}{7}(3,3,1,2) \ \ \frac{1}{5}(2,3,1,3) \ \ \frac{1}{23}(2,3,10,15)$ 
 & terminal
 \\  \hline
 205
 & $(2,5,6,7,15,25)$
 & 30
 & $\frac{1}{2}(1,1,1,1) \ \  \frac{1}{5}(2,1,2) \ \ \frac{1}{3}(2,2,1,4) \ \ \frac{1}{7}(5,6,1,4) \ \ \frac{1}{25}(2,6,7,15)$ 
 & klt
 \\  \hline
 206
 & $(2,6,7^2,15,23)$
 & 30
 & $\frac{1}{2}(1,1,1,1) \ \  \frac{1}{7}(6,1,2) \ \ \frac{1}{3}(2,1,1,2) \ \ \frac{1}{23}(2,6,7,15)$ 
 & terminal
 \\  \hline
 207
 & $(3,5,6,8,11,27)$
 & 30
 & $\frac{1}{3}(2,3,2) \ \  \frac{1}{27}(5,6,8,11) \ \ \frac{1}{2}(1,1,1,1) \ \ \frac{1}{11}(3,5,6,5) \ \ \frac{1}{15}(5,9,1,2) \ \ \frac{1}{17}(2,5,9,16)$ 
 & klt
 \\  \hline
 208
 & $(5^2,6,8,11,25)$
 & 30
 & $\frac{1}{5}(1,3,1) \ \  \frac{1}{25}(5,6,8,11) \ \ \frac{1}{8}(5,5,3,1) \ \ \frac{1}{2}(1,1,1,1) \ \ \frac{1}{11}(5,5,6,3)$ 
 & canonical
 \\  \hline
 209
 & $(5,6,8,11^2,19)$
 & 30
 & $\frac{1}{2}(1,1,1,1) \ \  \frac{1}{8}(5,3,3,3) \ \ \frac{1}{11}(5,6,8) \ \ \frac{1}{19}(5,6,8,11)$ 
 & terminal
 \\  \hline
 210
 & $(5,6,8,11,15^2)$
 & 30
 & $\frac{1}{5}(1,3,1) \ \  \frac{1}{2}(1,1,1,1) \ \ \frac{1}{8}(5,3,7,7) \ \ \frac{1}{3}(5,8,11) \ \ \frac{1}{15}(5,6,8,11) \ \ \frac{1}{11}(5,6,4,4)$ 
 & klt
 \\  \hline
 211
 & $(1,2,5,9,16,31)$
 & 32
 & $\frac{1}{2}(1,1,1,1) \ \  \frac{1}{5}(1,4,1,1) \ \ \frac{1}{9}(1,2,7,4) \ \ \frac{1}{31}(2,5,9,16) $ 
 & terminal
 \\  \hline
 212
 & $(2,3,5,9,16,29)$
 & 32
 & $\frac{1}{2}(1,1,1,1) \ \  \frac{1}{3}(2,1,2) \ \ \frac{1}{5}(3,4,1,4) \ \ \frac{1}{9}(2,3,7,2) \ \ \frac{1}{29}(2,5,9,16)$ 
 & terminal
 \\  \hline
 213
 & $(2,5^2,9,16,27)$
 & 32
 & $\frac{1}{2}(1,1,1,1) \ \  \frac{1}{5}(4,1,2) \ \ \frac{1}{9}(2,5,7) \ \ \frac{1}{27}(2,5,9,16)$ 
 & terminal
 \\  \hline
 214
 & $(2,5,9^2,16,23)$
 & 32
 & $\frac{1}{2}(1,1,1,1) \ \  \frac{1}{5}(4,4,1,3) \ \ \frac{1}{9}(2,7,5) \ \ \frac{1}{23}(2,5,9,16)$ 
 & terminal
 \\  \hline
 215
 & $(2,5,9,15,16,17)$
 & 32
 &  $\frac{1}{2}(1,1,1,1) \ \  \frac{1}{5}(4,1,2) \ \ \frac{1}{3}(2,1,2) \ \ \frac{1}{9}(2,6,7,8) \ \ \frac{1}{15}(5,9,1,2) \ \ \frac{1}{17}(2,5,9,16)$ 
 & terminal
 \\  \hline
 216
 & $(1,3,5,11,14,32)$
 & 33
 & $\frac{1}{5}(1,1,4,2) \ \  \frac{1}{2}(1,1,1) \ \ \frac{1}{14}(1,3,11,4) \ \ \frac{1}{32}(3,5,11,14)$ 
 & terminal
 \\  \hline
 217
 & $(3^2,5,11,14,30)$
 & 33
 & $\frac{1}{3}(2,2,2) \ \  \frac{1}{5}(3,1,4) \ \ \frac{1}{30}(3,5,11,14) \ \ \frac{1}{2}(1,1,1)  \ \ \frac{1}{14}(3,3,11,2) \ \ \frac{1}{27}(3,5,11,14)$ 
 & klt
 \\  \hline
 218
 & $(3,5^2,11,14,28)$
 & 33
 & $\frac{1}{5}(1,4,3) \ \  \frac{1}{14}(3,5,11) \ \ \frac{1}{28}(3,5,11,14)$ 
 & terminal
 \\  \hline
 219
 & $(3,5,6,11,14,27)$
 & 33
 & $\frac{1}{3}(2,2,2) \ \  \frac{1}{6}(5,5,2,3) \ \ \frac{1}{27}(3,5,11,14) \ \ \frac{1}{5}(1,1,4,2)  \ \ \frac{1}{14}(3,6,11,13) \ \ \frac{1}{27}(3,5,11,14)$ 
 & klt
 \\  \hline
 220
 & $(3,5,7,11,14,26)$
 & 33
 & $\frac{1}{5}(2,1,4,1) \ \  \frac{1}{7}(3,4,5) \ \ \frac{1}{14}(3,7,11,12) \ \ \frac{1}{2}(1,1,1)  \ \ \frac{1}{26}(3,5,11,14)$ 
 & canonical
 \\  \hline
 221
 & $(3,5,10,11,14,23)$
 & 33
 & $\frac{1}{5}(3,1,4) \ \  \frac{1}{10}(3,5,1,4) \ \ \frac{1}{2}(1,1,1) \ \ \frac{1}{14}(3,10,11,9)  \ \ \frac{1}{23}(3,5,11,14)$ 
 & terminal
 \\  \hline
 222
 & $(3,5,11,11,14,22)$
 & 33
 & $\frac{1}{5}(1,1,4,2) \ \  \frac{1}{11}(3,5,3) \ \ \frac{1}{22}(3,5,11,14) \ \ \frac{1}{2}(1,1,1)  \ \ \frac{1}{14}(3,11,11,8)$ 
 & klt
 \\  \hline
 223
 & $(3,5,11,14,14,19)$
 & 33
 & $\frac{1}{5}(1,4,4,4) \ \  \frac{1}{14}(3,5,11) \ \ \frac{1}{19}(3,5,11,14)$
 & terminal
 \\  \hline
 224
 & $(3,5,11,14,15,18)$
 & 33
 & $\frac{1}{3}(2,2,2) \ \  \frac{1}{15}(5,11,14,3) \ \ \frac{1}{18}(3,5,11,14) \ \ \frac{1}{5}(1,4,3) \ \ \frac{1}{14}(3,11,1,4)$ 
 & klt
 \\  \hline
 225
 & $(1^2,5,12,18,35)$
 & 36
 & $\frac{1}{5}(1,2,3) \ \  \frac{1}{3}(1,1,2,2) \ \ \frac{1}{35}(1,5,12,18)$
 & terminal
 \\  \hline
 226
 & $(1,3,4,11,18,35)$
 & 36
 & $\frac{1}{3}(1,1,2,2) \ \ \frac{1}{2}(1,1,1,1) \ \
 \frac{1}{11}(1,4,7,2) \ \ \frac{1}{35}(3,4,11,18)$ 
 & terminal
 \\  \hline
 227 
 & $(1,5^2,12,18,31)$
 & 36
 & $\frac{1}{5}(1,2,3) \ \ \frac{1}{6}(1,5,5,1) \ \
 \frac{1}{31}(1,5,12,18)$
 & terminal
 \\ \hline 
 228 
 & $(1,5,7,12,18,29)$
 & 36
 & $\frac{1}{5}(2,2,3,4) \ \ \frac{1}{7}(5,5,4,1) \ \
 \frac{1}{29}(11,5,12,18)\ \ \frac{1}{6}(1,5,1,5)  $
 & terminal
 \\ \hline 
 229
 & $(2,4,11,11,18,25)$
 & 36
 & $\frac{1}{2}(1,1,1) \ \ \frac{1}{11}(2,4,7) \ \
 \frac{1}{25}(2,4,11,18)$
 & klt
 \\ \hline 
 230
 & $(2,3,5,11,19,36)$
 & 38
 & $\frac{1}{2}(1,1,1,1) \ \ \frac{1}{36}(3,5,11,19) \ \
   \frac{1}{3}(5,11,19) \ \ \frac{1}{5}(2,11,19,36)$
 & terminal
 \\ \hline 
 231
 & $(1,5,7,8,20,39)$
 & 40
 & $\frac{1}{4}(1,1,3,3) \ \ \frac{1}{5}(1,2,3,4) \ \
   \frac{1}{7}(1,1,6,4) \ \ \frac{1}39(5,7,8,20)$
 & terminal
 \\ \hline 
 232
 & $(5,7,7,8,20,33)$
 & 40
 & $\frac{1}{4}(1,1,3,3) \ \ \frac{1}{5}(2,2,3,3) \ \ \frac{1}{7}(1,1,6,4) \ \
   \frac{1}{39}(5,11,19) \ \ \frac{1}{39}(5,7,8,20)$
 & terminal
 \\ \hline 
 233
 & $(5,7,8,11,20,29)$
 & 40
 & $\frac{1}{5}(2,3,1,4) \ \ \frac{1}{7}(1,4,6,1) \ \ \frac{1}{5}(5,7,8,9) \ \
   \frac{1}{29}(5,7,8,20) \ \ \frac{1}{4}(1,3,3,1)$
 & terminal
 \\ \hline 
 234
 & $(1,1,6,14,21,41)$
 & 40
 & $\frac{1}{7}(1,1,6,6) \ \ \frac{1}{41}(1,6,14,21) \ \ \frac{1}{2}(1,1,1,1) \ \
   \frac{1}{3}(1,1,2,2)$
 & terminal
 \\ \hline 
 235
 & $(1,2,5,14,21,41)$
 & 42
 & $\frac{1}{2}(1,1,1,1) \ \ \frac{1}{5}(1,4,1,1) \ \ \frac{1}{7}(1,2,5,6) \ \
   \frac{1}{41}(2,5,14,21)$
 & terminal
 \\ \hline 
 236
 & $(2,3,5,14,21,39)$
 & 42
 & $\frac{1}{2}(1,1,1,1) \ \ \frac{1}{3}(2,2,2) \ \ \frac{1}{39}(2,5,14,21) \ \
   \frac{1}{5}(3,4,1,4) \ \ \frac{1}{7}(2,3,5,4)$
 & klt
 \\ \hline 
 237
 & $(2,5,5,14,21,37)$
 & 42
 & $\frac{1}{2}(1,1,1,1) \ \ \frac{1}{5}(2,4,1) \ \ \frac{1}{37}(2,5,14,21) \ \
   \frac{1}{7}(2,5,5,2)$
 & terminal
\\ \hline 
 238
 & $(1,4,5,13,22,43)$
 & 44
 & $\frac{1}{2}(1,1,1,1) \ \ \frac{1}{5}(1,4,2,3) \ \ \frac{1}{13}(1,4,9,4) \ \
   \frac{1}{43}(4,5,13,22)$
 & terminal
 \\ \hline 
 239
 & $(4,5,5,13,22,39)$
 & 44
 & $\frac{1}{2}(1,1,1,1) \ \ \frac{1}{5}(4,13,2) \ \ \frac{1}{13}(4,5,5,9) \ \
   \frac{1}{39}(4,5,13,22)$
 & terminal
 \\ \hline 
 240
 & $(4,5,11,13,22,33)$
 & 44
 & $\frac{1}{2}(1,1,1,1) \ \ \frac{1}{5}(1,3,2,3) \ \ \frac{1}{11}(4,5,2) \ \
   \frac{1}{33}(4,5,13,22) \ \ \frac{1}{13}(4,11,9,7)$
 & klt
 \\ \hline 
 241
 & $(4,5,13,13,22,31)$
 & 44
 & $\frac{1}{2}(1,1,1,1) \ \ \frac{1}{5}(3,3,2,1) \ \ \frac{1}{13}(4,5,9) \ \
   \frac{1}{31}(4,5,13,22)$
 & terminal
 \\ \hline 
 242
 & $(1,3,5,16,24,47)$
 & 48
 & $\frac{1}{3}(1,5,16,47) \ \ \frac{1}{5}(1,1,4,2) \ \ \frac{1}{4}(1,3,1,3) \ \
   \frac{1}{47}(3,5,16,24)$
 & terminal
 \\ \hline 
 243
 & $(3,5,5,16,24,43)$
 & 48
 & $\frac{1}{5}(3,1,4) \ \ \frac{1}{4}(3,1,1,3) \ \ \frac{1}{43}(3,5,16,24)$ 
 & terminal
 \\ \hline 
 244
 & $(1,5,16,22,33,65)$
 & 66
 & $\frac{1}{5}(1,2,3) \ \ \frac{1}{65}(5,6,22,33) \ \ \frac{1}{2}(1,1,1,1) \ \
   \frac{1}{11}(1,5,6,10) \ \ \frac{1}{3}(1,2,1,2)$
 & terminal
\end{longtable}
\rowcolors{0}{}{}
\renewcommand{\arraystretch}{1.0}
\normalsize

\begin{corollary} \label{cor:terminality}
    There are exactly 197 families of terminal FK3 weighted Fano fourfold hypersurfaces.
\end{corollary}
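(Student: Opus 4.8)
The plan is to combine the explicit classification of \cref{table fourfolds} with the singularity analysis that produces \cref{table sigularities}, and then simply read off the number of families all of whose singularities are terminal. By \cref{thm list complete} there are exactly $244$ families, and for a general member $X_d$ the singularities are inherited from the ambient weighted projective space: they are all cyclic quotient singularities, described by the orbifold-strata recipe of Brown--Kasprzyk and Fletcher recalled above. First I would, for each family, enumerate the maximal orbifold strata $\Pi$ of the ambient $\P(a_0,\dots,a_5)$ (those with $r_I=\gcd\{a_i\mid i\in I\}>1$) that meet $X_d$, and determine the transverse quotient type $\frac{1}{r}(b_1,\dots,b_s)$ at the generic point of each such stratum. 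Here one must carefully distinguish cases (i) and (ii): when $f_d$ vanishes on $\Pi$ a tangent variable is used to drop one weight, exactly as in the worked example \#8, taking into account that the tangent-variable weights are only determined modulo $r$.

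Next, to each transverse singularity $\frac{1}{r}(b_1,\dots,b_s)$ I would apply the Reid--Shepherd-Barron--Tai criterion \cref{thm Reid terminal criterion}: the singularity is terminal precisely when $\sum_{j=1}^{s}[\,kb_j/r\,]>1$ for every $k=1,\dots,r-1$, where $[\,\cdot\,]$ denotes the fractional part. A family $X_d$ is then terminal if and only if \emph{every} one of its finitely many transverse quotient singularities satisfies this strict inequality; if even a single stratum yields a point that is only canonical or strictly klt, the whole family fails to be terminal. Performing this test for all strata of all families is exactly what fills in the ``type'' column of \cref{table sigularities}.

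Finally, the corollary follows by counting: among the $244$ rows of \cref{table sigularities}, exactly $197$ are labelled \emph{terminal}, the remaining $47$ being canonical or strictly klt, which yields the stated number. The main obstacle is not conceptual but the sheer bulk of the verification. For each of the $244$ families one must correctly list the maximal orbifold strata meeting $X_d$, reduce the transverse weights modulo $r$ (with the congruence subtleties of the tangent variable in case (i)), and then check the RSBT inequality across all residues $k=1,\dots,r-1$. The points most prone to error are ensuring that each presentation $\frac{1}{r}(b_1,\dots,b_s)$ is well-formed, so that \cref{thm Reid terminal criterion} genuinely applies, and treating positive-dimensional strata by reading the quotient data along the generic point rather than at an isolated coordinate point; once these are handled uniformly, the count of terminal families is a mechanical tally.
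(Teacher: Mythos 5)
Your proposal follows exactly the paper's own route: the paper establishes \cref{cor:terminality} by computing, for each of the $244$ families in \cref{table fourfolds}, the transverse cyclic quotient singularities via the Brown--Kasprzyk orbifold-strata recipe (with the tangent-variable adjustment in case (i)), testing each against the Reid--Shepherd-Barron--Tai criterion of \cref{thm Reid terminal criterion}, recording the result in \cref{table sigularities}, and tallying the $197$ rows labelled terminal. Your treatment of the subtleties (weights reduced modulo $r$, strata read at their generic points, the strict inequality for terminality) matches the paper's, so the proposal is correct and essentially identical in method.
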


\section{Other interesting cases}
\label{sec:others}
If we drop the hypothesis on the dimension of the singular locus of the FK3 weighted fourfold, we obtain new interesting examples, such as those in the following table.

\small
\renewcommand{\arraystretch}{1.3}
\rowcolors{2}{gray!10}{white}
\begin{longtable}{c||c|c|c}
\caption{Families of FK3 weighted fourfolds $X_d\subseteq \P\bigl(a_0,a_1,a_2,a_3,a_4,\frac{d}{2}\bigl)$ such that $a_4|d$, $a_4\neq\frac{d}{2}$. 
}
\label{table fourfolds from del Pezzo}
\\ 
 \rowcolor{white}
 \(\#\) 
 & $(\underline a)$
 & $d$
 & $h^{2,2}$
 \\ \hline \hline \endfirsthead
 1
 & $(1,2^4,3)$
 & 6
 & 15
 \\
 2
 & $(3^2,4^3,6)$
 & 12
 & 3
 \renewcommand{\arraystretch}{1.0}
\end{longtable}
\normalsize

\begin{remark}
    Observe that $a_4|d$ is not a redundant hypothesis: if we drop the hypothesis on the dimension of the singular locus, we cannot use \cref{prop a4 divide d} to conclude $a_4|d$ from $2a_5=d$, as we crucially used \cref{conditions}.(5) in the proof of the proposition.
\end{remark}

\begin{proposition}\label{prop table 3 is complete}
    \cref{table fourfolds from del Pezzo} lists all families of FK3 weighted fourfolds $X_d\subseteq \P\bigl(a_0,a_1,a_2,a_3,a_4,\frac{d}{2}\bigl)$ such that $a_4|d$, $a_4\neq\frac{d}{2}$.
\end{proposition}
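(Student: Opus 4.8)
The plan is to reuse the combinatorial analysis already carried out in the proof of \cref{prop a4=a5=d/2}, which relies on \cref{conditions}.(1)--(4) alone and in particular \emph{not} on the singular-locus bound \cref{conditions}.(5); this is exactly the regime of the present statement. Set $a_5=\frac d2$ and write $d=\lambda a_4$. Since $a_4\le a_5$ and $a_4\neq\frac d2=a_5$, we have $\lambda\ge 3$. First I would run the degree estimate from the opening of the proof of \cref{prop a4=a5=d/2}: the FK3 relation \cref{conditions}.(3) gives $a_0+\dots+a_4=2d-a_5=\frac{3d}{2}=\frac{3\lambda a_4}{2}$, while $a_0+\dots+a_4\le 5a_4$ by \cref{conditions}.(2); hence $3\lambda\le 10$, forcing $\lambda=3$, that is $d=3a_4$, i.e.\ $2a_5=3a_4$. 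This is precisely the hypothesis of \cref{claim 2a5=3a4=2}, which applies since we retain \cref{conditions}.(1)--(4); invoking it yields $(a_2,a_3,a_4,a_5,d)=(2k,2k,2k,3k,6k)$ for some integer $k\ge 1$.

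It remains to determine $a_0$ and $a_1$. The FK3 relation now reads $a_0+a_1=2d-(a_2+a_3+a_4+a_5)=12k-9k=3k$, and the ordering $a_0\le a_1\le a_2=2k$ gives $\frac{3k}{2}\le a_1\le 2k$. Here I would apply the quasi-smoothness criterion of \cref{prop quasismooth} to $I=\{1\}$: running over the admissible indices $j\neq 1$, the differences $d-a_j$ take the values $d-a_0=3k+a_1$, $d-a_{2,3,4}=4k$ and $d-a_5=3k$, so the criterion requires $a_1\mid 6k$, or $a_1\mid 3k$, or $a_1\mid 4k$. Combined with $\frac{3k}{2}\le a_1\le 2k$: writing $6k=ma_1$ forces $m\in\{3,4\}$, writing $4k=ma_1$ forces $m=2$, and writing $3k=ma_1$ forces $m=2$. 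In every case $a_1=2k$ or $a_1=\frac{3k}{2}$ (the latter only when $k$ is even).

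I would then split into these two cases and use well-formedness \cref{conditions}.(1) to collapse the remaining freedom. If $a_1=2k$ then $a_0=k$ and the weight vector is $(k,2k,2k,2k,2k,3k)$; deleting $a_1$ gives $\gcd(k,2k,2k,2k,3k)=k$, which must equal $1$, so $k=1$ and we recover $(1,2^4,3)$ with $d=6$. If $a_1=\frac{3k}{2}$, writing $k=2m$ gives $a_0=a_1=3m$ and weight vector $(3m,3m,4m,4m,4m,6m)$; deleting $a_0$ gives $\gcd(3m,4m,4m,4m,6m)=m$, which must equal $1$, so $m=1$, $k=2$, and we recover $(3^2,4^3,6)$ with $d=12$. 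Both of these appear in \cref{table fourfolds from del Pezzo}, so the list is complete.

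The main obstacle is not any single hard computation but the bookkeeping and the correct invocation of the earlier results: one must verify that \cref{claim 2a5=3a4=2} is genuinely available here (it is, since its hypotheses are only \cref{conditions}.(1)--(4)), and that the enumeration of the values $d-a_j$ for $I=\{1\}$ is exhaustive. It is worth stressing \emph{why} \cref{conditions}.(5) is never used: the two surviving families have $\gcd(a_2,a_3,a_4,a_5)>1$ and thus a two-dimensional singular locus, so they are exactly the examples that the codimension-three condition would have discarded — consistently with their absence from \cref{table fourfolds}.
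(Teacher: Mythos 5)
Your proof is correct, and its skeleton is the same as the paper's: the same opening estimate forcing $d=3a_4$, the same invocation of \cref{claim 2a5=3a4=2} (which, as you rightly check, needs only \cref{conditions}.(1)--(4) and is therefore available without the singular-locus bound) to obtain $(a_2,a_3,a_4,a_5,d)=(2k,2k,2k,3k,6k)$, and the same two closing tools, namely quasi-smoothness on $I=\{1\}$ and well-formedness. The only genuine difference is organizational: the paper first splits off the case $a_1=a_2$ and then, for $a_1<a_2$, analyzes the shape of the quasi-smoothness equation $d=na_1$ or $d=na_1+a_j$, with sub-cases $j=5$, $j\in\{2,3,4\}$, $j=0$, some of which loop back into earlier ones; you instead translate the singleton criterion in one stroke into the divisibility conditions $a_1\mid 6k$, $a_1\mid 4k$, $a_1\mid 3k$ (your enumeration of the values $d-a_j$ is exhaustive, since $a_1\mid 3k+a_1$ is equivalent to $a_1\mid 3k$), and the bounds $\frac{3k}{2}\le a_1\le 2k$ then force $a_1\in\bigl\{2k,\frac{3k}{2}\bigr\}$ directly --- shorter, and free of the paper's overlapping sub-cases, at no cost, since both cases collapse under well-formedness exactly as in the paper to $(1,2^4,3)$, $d=6$, and $(3^2,4^3,6)$, $d=12$. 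One slip, in your closing remark rather than in the proof: for family \#1, i.e.\ $(1,2^4,3)$, one has $\gcd(a_2,a_3,a_4,a_5)=\gcd(2,2,2,3)=1$; the four weights sharing a common factor there are $a_1,a_2,a_3,a_4$, with $\gcd=2$ (for family \#2 your indices are correct, $\gcd(4,4,4,6)=2$). The point you draw from it --- that both families have two-dimensional singular locus and are precisely what \cref{conditions}.(5) excludes --- still stands.
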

\begin{proof}
A FK3 weighted fourfold as in the statement satisfies \cref{conditions}.(1)-(4). A straightforward computation in $a_i,d$ (see the first lines of the proof of \cref{prop a4=a5=d/2}) gives that in the above assumptions $a_4=\frac{d}{3}$, hence \cref{claim 2a5=3a4=2} gives
\begin{equation}\label{eq (a2,...,d)=(2k,...,6k)}
    (a_2,a_3,a_4,a_5,d)=(2k,2k,2k,3k,6k),\hspace{3mm}k\ge1.
\end{equation}
Combining \eqref{eq (a2,...,d)=(2k,...,6k)} with \cref{conditions}.(3) we get $a_0+a_1=3k$, and $a_1\le a_2=2k$ gives $a_1\ge\frac{3}{2}k$ and $a_0\ge k$. 

Assume $a_1=a_2$; we have $\gcd(a_1,\dots,a_5)=k$, hence by \cref{conditions}.(1) we get $k=1$ and $X_d$ is example $\#$1 in \cref{table fourfolds from del Pezzo}.

Assume $a_1<a_2$, hence $a_0>k$. The quasi-smoothness criterion of \cref{prop quasismooth} on $I=\{1\}$ gives the following options.
\begin{enumerate}
    \item $d=na_1$ for some $n\in\Z_{\ge 1}$; from $na_1=d=3a_2>3a_1$ and $6k=d=na_1\ge\frac{3}{2}nk$ we get $3<n\le 4$, hence $n=4$. We obtain $2a_1=3k$, hence $k=2\alpha$ for some $\alpha\in\Z_{\ge1}$ and $a_1=3\alpha$, $a_2=a_3=a_4=4\alpha$, $a_5=6\alpha$ hence $\gcd(a_1,\dots,a_5)=\alpha$, then $\alpha=1$ by \cref{conditions}.(1) and $X_d$ is example $\#$2 in \cref{table fourfolds from del Pezzo}. 
    \item $d=na_1+a_j$ for some $j\neq1$ and $n\in\Z_{\ge1}$. Assume $j=5$: we have $\frac{d}{2}=d-a_5=na_1$, then $d=2na_1$ and we are in case (1) here above. Assume $j=2,3$ or 4: we have $4k=d-a_j=na_1>nk$ and $4k=na_1<na_2=2nk$, then $n=3$ and $4k=3a_1$; we obtain $k=3\alpha$ for some $\alpha\in\Z_{\ge 1}$, hence $a_1=4\alpha$ and $a_0=3k-a_1=5\alpha$, which is a contradiction. Finally, assume $j=0$: we have $6k=na_1+a_0\ge(n+1)a_0>(n+1)k$ and $6k=na_1+a_0<(n+1)a_2=2(n+1)k$, hence $2<n<5$. If $n=3$, then $6k=d=3a_1+a_0$, which combined with $a_0+a_1=3k$ gives $2a_1=3k$, and we conclude, as in case (1) above, that $X_d$ is the example $\#$2 in \cref{table fourfolds from del Pezzo}. If $n=4$, then $6k=d=4a_1+a_0$, which combined with $a_0+a_1=3k$ gives $a_1=k$ and $a_2=2k$, which is a contradiction.
\end{enumerate}
We conclude that all possible cases are those written in \cref{table fourfolds from del Pezzo}, which is our statement.
\end{proof}

\begin{remark}
Examples in \cref{table fourfolds from del Pezzo} are examples N2, N3 in \cite[Table 1]{LPZ18}. Observe that every other example in \cite[Table 1]{LPZ18} is of the same type as the ones explained in \cref{remark double suspension}; those having singular locus of dimension at most 1, i.e.\ examples N4, N5, N8, N9, are examples \#3, 9, 10, 32 in \cref{table fourfolds} respectively.
\end{remark}

A natural aim is to give a geometric description of the examples in \cref{table fourfolds from del Pezzo} that explains their K3 structure, similarly to what was done for the examples in \cref{table fourfolds}. Observe that for the examples in \cref{table fourfolds from del Pezzo} it is not possible to find two indices $i,j$ such that $a_i+a_j=d$ (see \cref{prop blow-up}); on the other hand, in some special cases we can mimic the construction in \cref{remark double suspension}, which surprisingly in this case does not end up with a K3 surface, as explained in the following remark.

\begin{remark}[Cyclic examples]
Let $X_d\subseteq\P(a_0,\dots,a_4,\frac{d}{2})$ be one of the weighted hypersurfaces in \cref{table fourfolds from del Pezzo}. The defining equation $f_d$ of $X_d$ can always be assumed to be cyclic in $x_5$, by completing the square. We further assume that it is cyclic also in  $x_4$, i.e.\ that $f_d=x_5^2+x_4^3+g_d(x_0,\dots,x_3)$. 

Under this hypothesis the projection $\P\bigl(a_0,\dots,a_4,\frac{d}{2}\bigl)\dashrightarrow\P(a_0,\dots,a_4)$ restricts to a double cover $X_d\xrightarrow{\phi_1} \P\bigl(a_0,\dots,a_3,\frac{d}{2}\bigl)$ with branch locus $Y:=V(x_4^3+g_d)$, and the further projection $\P(a_0,\dots,a_4)\dashrightarrow\P(a_0,\dots,a_3)$ restricts to a $3:1$ cover $Y\xrightarrow{\phi_2} \P(a_0,\dots,a_3)$, with branch locus  $S_d:=V(g_d)$. 
Summarizing, we have the following diagram:
\[
\begin{tikzcd}
        & & X_d \ar[d, "\phi_1" left, "2:1" near start] \ar[r, hook] & \P\Bigl(a_0,\dots,a_3,a_4,\frac{d}{2}\Bigl) \\
        & Y \ar[r, hook] \ar[d, "\phi_2" left, "3:1" near start] & \P(a_0,\dots,a_4) &  \\
        S_d \ar[r, hook] & \P(a_0,\dots,a_3). &
\end{tikzcd}
\]
The surface $S_d\subseteq \P(a_0,\dots,a_3)$ is a weighted hypersurface: it is quasi-smooth, as $a_i|d$ for all i, and is well-formed and not a linear cone by direct computations. The adjuction formula \eqref{eq adjuction} on $S_d$ gives: 
\[\OO(K_{S_d})\cong \mathcal O_{S_d}\bigl(\text{deg}(S_d)-(a_0+\dots+a_3)\bigl)=\mathcal O_{S_d}\bigl(d-(2d-a_5-a_4)\bigl)=\mathcal O_{S_d}\Bigl(-\frac{d}{6}\Bigl)\]
where the central equality follows from \cref{conditions}.(3) and the last one from $3a_4=2a_5=d$. We find that $K_{S_d}$ is antiample, i.e.\ $S_d$ is a del Pezzo surface. 

We remark that the above construction is special in moduli, since the general element of the families in \cref{prop table 3 is complete} will not be cyclic in the variable $x_4$.

We also remark the similarity between this construction and the \emph{cyclic cubic fourfolds}, studied for example in \cite{BHS22,BCL23}, where both $3a_4=3a_5=d$.
\end{remark}

The examples in \cref{table fourfolds from del Pezzo} are quite interesting from the point of view of rationality. In particular, these are the only two families of the list in \cite{LPZ18} that do not satisfy the condition $d=2a_5=2a_4$, which would ensure their rationality by \cref{rationality}. In general, we do not see a way to check their rationality in an immediate way. On the other hand, we are not able to identify any \emph{obvious} K3 surface which would be responsible for the K3 structure in their Hodge theory or derived category of perfect complexes. 
In fact, the two families of fourfolds in \cref{table fourfolds from del Pezzo} are still covered of \cite[Corollary 4.2]{Kuz19}, and it would be interesting to understand if the relevant CY2 category in the sense of \cite[Example 6.11]{Per22} turned out to be commutative or not.

In particular, 
if they turned out to be rational, it would be interesting in light of extending Kuznetsov's rationality conjecture from the cubic fourfold to other (weighted) fourfolds of K3 type. We therefore end our paper with the following question.

\begin{question}
    Let $X$ be one of the two weighted fourfolds contained in \cref{prop table 3 is complete}. Is the general member of these families rational?
\end{question}

\nocite{*}
\printbibliography

\end{document}